\newtheorem{thm}{Theorem}
\newtheorem*{mthm}{Main Theorem}
\newtheorem{cor}[thm]{Corollary}
\newtheorem{lem}[thm]{Lemma}
\newtheorem{prop}[thm]{Proposition}
\theoremstyle{definition}
\newtheorem{rem}[thm]{Remark}
\theoremstyle{definition}
\newtheorem{exm}[thm]{Example}
\newcommand{\no}{\noindent}
\def\R{\mathbb{R}}
\def\sign{\text{\rm sign}}
\numberwithin{equation}{section} 
\numberwithin{thm}{section} 
\newcommand{\vvcenteredinclude}[2]{\begingroup
\setbox0=\hbox{\includegraphics[scale=#1]{#2}}%
\parbox{\wd0}{\box0}\endgroup}  
\title[Tree invariants and Milnor linking numbers]{Tree invariants and Milnor linking numbers\\ with indeterminacy}
\author{R. Komendarczyk}
\address{
Tulane University,
New Orleans, Louisiana 70118 } 
\email{rako@tulane.edu}
\thanks{Supported by NSF DMS 1043009 and DARPA YFA N66001-11-1-4132 during the years 2011-2015} 
\author{A. Michaelides}
\address{
University of South Alabama,
Mobile, AL 36688 } 
\email{amichaelides@southalabama.edu}
\begin{document}

\begin{abstract}
	 The paper concerns the {\em tree invariants} of string links, introduced by  Kravchenko and Polyak, which are closely related to the classical Milnor linking numbers also known as $\bar{\mu}$--invariants. We prove that, analogously as for $\bar{\mu}$--invariants, certain residue classes of tree invariants yield link homotopy invariants of closed links. The proof is arrow diagramatic and provides a more geometric insight into the indeterminacy through certain tree stacking operations. Further, we show that the indeterminacy of tree invariants is consistent with the original Milnor's indeterminacy. For practical purposes, we also provide a recursive procedure for computing arrow polynomials of tree invariants.
 \end{abstract}

\maketitle
\vspace{-.5cm}
\section{Introduction}\label{S:intro}
Arrow polynomial formulas of Polyak and Viro \cite{Polyak-Viro:1994} are a computationally attractive way to represent Vassiliev's finite type invariants of knots and links \cite{Vassiliev:1992, Birman-Lin:1993, Bar-Natan:1995a}.  The input to such formula is a {\em Gauss diagram} $G_L$ of a (based) link $L=L_1\cup\ldots\cup L_n$, $L_i:S^1\longmapsto \R^3$ in $\R^3$, obtained from any plane projection of $L$, by drawing $n$ disjoint oriented circles with  basepoints, and marked (positive/negative) arrows connecting distinct points on the circles. Points of the $i$th circle correspond to values of a parameter for  $L_i:S^1\longmapsto \R^3$, c.f. \cite{Goussarov-Polyak-Viro:2000}. A positive/negative arrow between two points on the $i$th and $j$th circle is drawn, if and only if, for the corresponding parameter values the plane diagram of $L$ has a positive/negative crossing, the arrow points from the underpass to the overpass. Alternatively, we may replace the components of $G_L$ i.e. the  circles, with vertical or horizontal oriented segments, we call {\em strings}, assuming that the beginning and end of each string is identified with the basepoint, Figure \ref{fig:gauss-d} illustrates this situation.
\begin{figure}[ht]
	\includegraphics[height=.22\textheight]{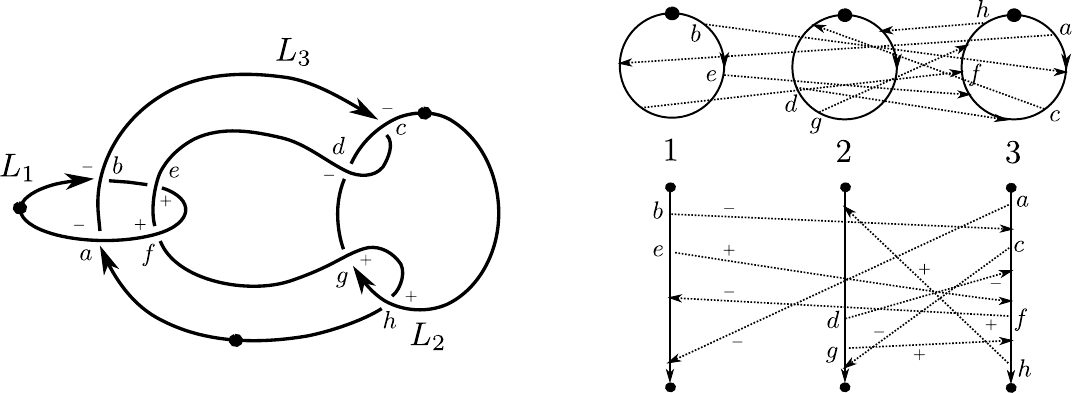}
	\caption{Gauss diagrams of Borromean rings (drawn with {\em circle}  and respectively {\em string} components).}\label{fig:gauss-d}
\end{figure}

 An {\em arrow diagram} $A$ is an arbitrary unmarked diagram (i.e. arrows have no signs attached to them) analogous to a Gauss diagram but not necessarily obtained from a plane link diagram.  
The arrow diagrams and Gauss diagrams can be paired as follows; for  a  Gauss diagram $G=G_L$ of the $n$--component link $L$, a {\em{embedding}} of an  arrow diagram $A$ in $G$ is a graph embedding of $A$ into $G$ mapping components of $A$ to components of $G$, preserving the basepoints and arrow orientations. Define the {\em{sign}} of an embedding $\phi : A \longrightarrow G$  by\footnote{arrows of $A$ will be denoted by the greek letters: $\alpha$, $\beta$,\ldots , and arrows of $G$ lowercase letters: $g$, $h$,\ldots}
\begin{equation}\label{eq:sign_representation}
\sign(\phi) = \prod_{\alpha \in A} \sign(\phi (\alpha)),
\end{equation}
\no where the $\sign(\phi(\alpha))$ is a sign of the arrow $g=\phi(\alpha)$ in $G$. Then $\langle A, G\rangle$ stands for the sum 
\begin{equation}\label{eq:<A,G>}
\langle A, G\rangle = \sum_{\phi: A \rightarrow G} \sign(\phi),
\end{equation}
\no taken over all embeddings $\phi: A \longrightarrow G$ of $A$ in $G$. An arrowhead of $\alpha\in A$ will be denoted by $h(\alpha)$ and the arrowtail by $t(\alpha)$. We write $\alpha\sim (i,j)$ or $g\sim (i,j)$ if $\alpha$, resp. $g$, has its head on the $i$--component of $A$, and tail on the $j$--component. 
A formal sum of arrow diagrams $P=\sum_i c_i A_i$ with integer coefficients is known as an {\em arrow polynomial} \cite{Polyak-Viro:1994}, and $\langle P,G\rangle$ is defined from \eqref{eq:sign_representation} and \eqref{eq:<A,G>} by the linear extension. A theorem of Goussarov \cite{Goussarov-Polyak-Viro:2000} shows that any finite type invariant $v$ of knots can be expressed  as $\langle P_v,\,\cdot\,\rangle$ for a suitable choice of the arrow polynomial $P_v$. Arrow polynomials of some low degree invariants have been computed in \cite{Ostlund:2004,Polyak-Viro:1994,Polyak-Viro:2001,Willerton:2002}. For instance, the second coefficient of the Conway polynomial $c_2(K)$ of a knot $K$, represented by a Gauss diagram $G_K$ is given by $\langle\vvcenteredinclude{.2}{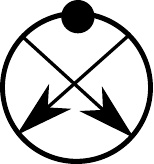}\,, G_K\rangle$ (c.f. \cite{Polyak-Viro:1994}). Apart from low degree examples, the arrow diagram formulae are known for: the coefficients of the Conway \cite{Chmutov-Khoury-Rossi:2009} and the HOMFLY-PT polynomials, \cite{Chmutov-Polyak:2010}.

In the case of {\em string links} \cite{Habegger-Lin:1990,Bar-Natan:1995b}, Kravchenko and Polyak \cite{Kravchenko-Polyak:2011} introduced a family of link homotopy invariants, called {\em tree invariants} which are closely related to  the classical Milnor linking numbers, \cite{Milnor:1954,Milnor:1957,Levine:1988}.
\begin{figure}[!ht] 
	\centering
	$\vcenter{\hbox{\includegraphics[width=0.15\textwidth]{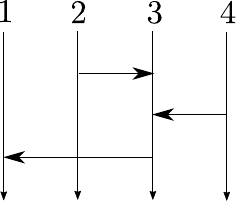}}}$
	\qquad\quad 
	$\vcenter{\hbox{\includegraphics[width=0.15\textwidth]{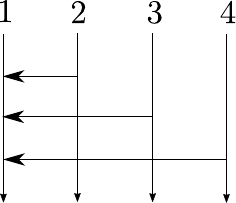}}}$
	\qquad\quad
	 $\vcenter{\hbox{\includegraphics[width=.35\textwidth]{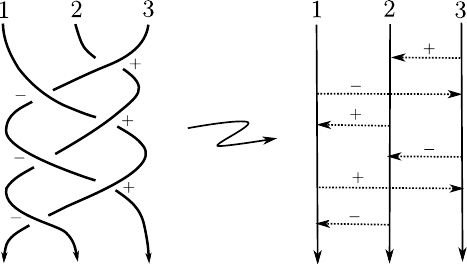}}}$
	\caption{Sample tree diagrams in $\mathcal{A}(I;1)$, $I=\{1,2,3,4\}$ (left). Gauss diagrams of a Borromean string link (right).}\label{fig:trees}
\end{figure} 
 Recall that homotopy or link--homotopy is a weaker form of equivalence than isotopy which, in particular, allows self-crossing of strands.
 Specifically, Kravchenko and Polyak defined a family  of  {\em (planar) tree diagrams} on $n$ components with a distinguished component $j$ called a {\em trunk}, for $I = \{ i_1, i_2,...,i_r \}$,  $1 \leq i_1 < i_2 < ...< i_r \leq n$ and $j \in I$, denote the set of these diagrams with leaves  indexed by $I$ and a trunk on $j$ by\footnote{this is a slightly adjusted notation from \cite{Kravchenko-Polyak:2011} (Figure \ref{fig:trees}(left)), where the index of the trunk is also in $I$.} $\mathcal{A}(I;j)$, referring to Section \ref{S:tree-invariants} for further details.
 Paraphrasing the main theorem of \cite{Kravchenko-Polyak:2011} we have
\begin{thm}[Kravchenko and Polyak \cite{Kravchenko-Polyak:2011}]\label{thm:Z_I;j-invariants}
	Let $\ell=(\ell_1,\ldots,\ell_n)$ be an $n$--component string link and $G_\ell$ its Gauss diagram. Consider the following arrow polynomial 
	\begin{equation}\label{eq:Z_I;j-poly}
	Z_{I;j}=\sum_{A\in \mathcal{A}(I;j)} \text{\rm sign}(A) A,
	\end{equation}
then 	
	\begin{equation}\label{eq:Z_I;j}
	 Z_{I;j}(\ell)=\langle Z_{I;j}, G_\ell\rangle=\sum_{A\in \mathcal{A}(I;j)} \text{\rm sign}(A)\langle A,G_\ell\rangle,
	\end{equation}
	defines a link-homotopy invariant of the string link $\ell$, where $\text{sign}(A)=(-1)^q$ and $q$ is given by  number of arrows in $A$ pointing to the right.
\end{thm}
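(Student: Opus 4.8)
The plan is to check directly that the integer $Z_{I;j}(\ell)=\langle Z_{I;j},G_\ell\rangle$ is unchanged under each elementary move on Gauss diagrams that generates link-homotopy of string links. By the Goussarov--Polyak--Viro dictionary, the evaluation $G\mapsto\langle P,G\rangle$ of an arrow polynomial $P$ descends to a genuine invariant of string links exactly when it is invariant under the Gauss-diagram forms of the Reidemeister moves $\Omega_1,\Omega_2,\Omega_3$, and it becomes a \emph{link-homotopy} invariant once one also allows self-crossing changes, i.e.\ arbitrary creation, deletion, and modification of self-arrows (arrows with head and tail on the same string). First I would isolate the structural fact that drives the whole argument: because the leaves of a tree diagram are indexed by the \emph{set} $I$ and the trunk sits on a single component $j$, every arrow of every $A\in\mathcal{A}(I;j)$ satisfies $\alpha\sim(i,k)$ with $i\neq k$, so $A$ contains no self-arrows.

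From this, the ``easy'' moves follow at once. Since a representation $\phi\co A\to G_\ell$ preserves the head and tail strings of each arrow, it can never map an arrow of $A$ onto a self-arrow of $G_\ell$; hence every $\langle A,G_\ell\rangle$, and therefore $Z_{I;j}(\ell)$, is literally independent of the self-arrows of $G_\ell$. This gives invariance under self-crossing changes and under $\Omega_1$ (whose kink is a self-arrow) simultaneously. For $\Omega_2$ the move produces two arrows $g_+,g_-$ of opposite sign in the bigon it creates: representations using exactly one of them are matched by the swap $g_+\leftrightarrow g_-$, and by \eqref{eq:sign_representation} the two paired contributions differ by a factor $-1$ and cancel. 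Representations that would use both $g_+$ and $g_-$ force two arrows of a single tree into the adjacent bigon positions, a configuration that a short analysis of the planar tree structure either forbids or arranges into further canceling pairs.

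The hard part will be invariance under $\Omega_3$, which I expect to be the main obstacle. The triangle move exchanges two local pictures of three arrows on three strings; after discarding, as in the $\Omega_2$ step, all representations meeting fewer than all three of the arrows involved, the net change in $Z_{I;j}(\ell)$ is the difference of the contributions of representations that meet the full local triple. The claim is that this difference, summed over all $A\in\mathcal{A}(I;j)$ with the weights $\sign(A)=(-1)^q$, vanishes identically, and this is precisely why one must sum over the \emph{entire} family $\mathcal{A}(I;j)$ rather than evaluating a single tree. Concretely, I would show that $\mathcal{A}(I;j)$ is closed under the local reattachment (``tree-stacking'') operation that $\Omega_3$ induces at a trivalent vertex of a tree --- an $\mathrm{IHX}$/$\mathrm{STU}$-type relation --- and then match the before- and after-configurations term by term with pairs of trees differing by one reattachment. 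The delicate point, and the crux of the proof, is the sign bookkeeping: one must verify that under each such reattachment the number $q$ of rightward arrows changes parity in exactly the way needed for $\sign(A)=(-1)^q$ to assign the paired trees opposite signs, so that the two sides of the $\Omega_3$ move contribute the same total.
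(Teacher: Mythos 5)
Your overall framework coincides with the one the paper relies on (the paper quotes this theorem from Kravchenko--Polyak, but its Lemma \ref{lem:reidemeister} reproduces the argument for the closed-link analogue): the absence of self-arrows in tree diagrams gives $\mathbf{r1}$-invariance and link-homotopy invariance at once, and the $\mathbf{r2}$ cancellation works as you say. Your hedge about representations using both $g_+$ and $g_-$ can be closed immediately: $g_+$ and $g_-$ are parallel, so they have their tails on the same string, while by ({\bf d2}) a tree diagram has at most one arrowtail per string; hence no embedding of a tree can use both, and this is exactly how the paper disposes of that case.

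The genuine gap is in your $\mathbf{r3}$ reduction. You propose to discard, ``as in the $\Omega_2$ step,'' all representations meeting fewer than all three of the local arrows, leaving the contributions of representations meeting the full triple. But no tree diagram can contain all three arrows $\alpha\sim(i_1,i_2)$, $\beta\sim(i_1,i_3)$, $\gamma\sim(i_2,i_3)$: two of them have their tails on the same string, violating ({\bf d2}) (the paper records this as ``all three arrows cannot be in $A$''). So under your reduction the $\mathbf{r3}$ difference would be vacuously $0=0$, which is the signal that the reduction itself is invalid: representations using \emph{exactly two} of the three arrows do \emph{not} cancel trivially --- they are precisely the nontrivial contributions. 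The paper's Lemma \ref{lem:reidemeister}, Case $\mathbf{r3}$, shows that trees containing none or one of the three arrows contribute zero to $\langle A,G\rangle-\langle A,G'\rangle$, and then handles the two-arrow trees by a bijection $f$ between the family of trees containing the two-arrow subtree $D$ (with representations sending $\alpha\mapsto g$, $\beta\mapsto h$) and the family containing $D'$ (with $\alpha\mapsto g'$, $\gamma\mapsto k'$), obtained by replacing one subtree by the other; the verification that $\sign(D)=\sign(D')$ (or $\sign(D')=-\sign(D'')$ in the subcases where both terms sit on the same side) is exactly the sign bookkeeping you flag as the crux. So the ``reattachment'' pairing you sketch is the right tool, but it must be aimed at the two-arrow configurations, not at the empty three-arrow case; as written, your case division would let the nontrivial terms slip through unexamined.
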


\no  Invariants $Z_{I;j}(\ell)$ are called the {\em tree invariants} and are finite type string link invariants, \cite{Chmutov-Duzhin-Mostovoy:2012}. As such, they must necessarily be determined  by the classical $\mu$--invariants of string links,  \cite{Habegger-Masbaum:2000}. In their paper  \cite{Kravchenko-Polyak:2011} Kravchenko and Polyak show that for a particular order of leaves and the trunk, namely for $I=\{i_1,i_2,\ldots,i_r\}$ and $j$ such that $1\leq j <i_1 <i_2 <\ldots <i_r \leq n$, we have the identity
\begin{equation}\label{eq:Z_I;j=mu_I;j}
 Z_{I;j}(\ell)=\mu_{I;j}(\ell).
\end{equation}

The main theorem of the current paper is a generalization of Theorem \ref{thm:Z_I;j-invariants} to closed $n$--component links. An analogous result by Polyak and Viro,  Theorem 6 in \cite{Polyak-Viro:1994} (see also \cite{Ostlund:2004}), concerns the case $n=3$. 
\begin{mthm}
	Suppose $L$ is a based $n$--component link and $G_L$ its Gauss diagram, 
	for $I=\{i_1,\ldots,i_r\}$, $1\leq i_1<\ldots <i_r\leq n$ and $j=i_k$
  the following quantity defines a homotopy invariant of $L$;
	\begin{equation}\label{eq:bar-Z_I;j}
	\overline{Z}_{I;j}(L)=\langle Z_{I;j}, G_L\rangle \mod \Delta_Z(I;j),
	\end{equation}  
	where
	\begin{equation}\label{eq:Delta_Z(I;j)}
	 \Delta_Z(I;j)=\gcd\bigl\{ \langle Z_{J;k}, G_L\rangle\ |\  J\subsetneq I; k\in J\bigr\}.
	\end{equation} 
\end{mthm}
Further, the relation of $\overline{Z}_{I;j}$--invariants and Milnor higher linking numbers i.e. $\overline{\mu}$--invariants, \cite{Milnor:1954, Milnor:1957} is obtained in the following.
\begin{cor}\label{cor:bar-mu-bar-Z}
	For $I=\{i_1,i_2,\ldots,i_r\}$ and $j=i_1$ such that $1\leq <i_1 <i_2 <\ldots <i_r \leq n$ we have
	\begin{equation}
	 \overline{Z}_{I;j}(L)=\overline{\mu}_{I-\{j\};j}(L).
	\end{equation}
\end{cor}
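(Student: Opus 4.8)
The plan is to evaluate the homotopy invariant $\overline{Z}_{I;j}$ on a string--link representative of $L$, to identify the resulting integer with a Milnor number via the Kravchenko--Polyak identity, and then to check that the modulus $\Delta_Z(I;j)$ of the Main Theorem coincides with Milnor's classical indeterminacy. First I would choose basepoints on $L$ and cut it open into an $n$--component string link $\ell$, so that its Gauss diagram $G_\ell$ is obtained from $G_L$ by severing each circle at its basepoint. Because every representation $\phi\colon A\to G_L$ of a tree diagram $A\in\mathcal{A}(I;j)$ preserves basepoints, and because the cyclic order of arrow endpoints read along a circle from its basepoint coincides with the linear order along the corresponding cut string, the representations of each $A\in\mathcal{A}(I;j)$ into $G_L$ and into $G_\ell$ are in sign--preserving bijection; by linearity this gives $\langle Z_{I;j},G_L\rangle=\langle Z_{I;j},G_\ell\rangle=Z_{I;j}(\ell)$ as integers.

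Next I would invoke the identity \eqref{eq:Z_I;j=mu_I;j}. In the notation of the Main Theorem the trunk $j=i_1$ lies in $I$ and the leaves sit on $I-\{j\}=\{i_2,\ldots,i_r\}$; since $j=i_1<i_2<\ldots<i_r$, the trunk is strictly smaller than every leaf, which is exactly the hypothesis of \eqref{eq:Z_I;j=mu_I;j} applied with leaf set $I-\{j\}$. Therefore $Z_{I;j}(\ell)=\mu_{I-\{j\};j}(\ell)$ on the nose, and combining with the first step gives $\langle Z_{I;j},G_L\rangle=\mu_{I-\{j\};j}(\ell)$. Thus the integer representing $\overline{Z}_{I;j}(L)$ is precisely the string--link Milnor number of a cut--open representative, which is the very integer Milnor uses to define the closed--link invariant $\overline{\mu}_{I-\{j\};j}(L)$.

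It remains to match the moduli, namely to prove $\Delta_Z(I;j)=\Delta_\mu$, where $\Delta_\mu$ denotes Milnor's indeterminacy for $\overline{\mu}_{I-\{j\};j}$ (the gcd of the lower $\overline{\mu}$--numbers obtained by deleting indices and cyclically permuting). I would argue by induction on $|I|$. For each proper subset $J\subsetneq I$ and trunk $k\in J$, the reasoning of the first two steps applied to the smaller index set, together with the consistency between the tree and Milnor indeterminacies, should identify $\langle Z_{J;k},G_L\rangle$ with a lower Milnor number on the index set $J$ with distinguished index $k$, modulo still lower terms. Running over all $J\subsetneq I$ and all $k\in J$ and taking gcd's then yields, via the cyclic--symmetry and shuffle relations among Milnor numbers, the same generating set as Milnor's collection of proper cyclic subsequences of $I$, so that $\Delta_Z(I;j)=\Delta_\mu$ and the two residue classes $\overline{Z}_{I;j}(L)$ and $\overline{\mu}_{I-\{j\};j}(L)$ coincide.

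The hard part is this last step. The definition of $\Delta_Z(I;j)$ ranges over every trunk $k\in J$, whereas \eqref{eq:Z_I;j=mu_I;j} pins $Z_{J;k}$ to a single Milnor number only when $k$ is the minimal index of $J$; for a general trunk one controls $Z_{J;k}$ only modulo lower tree invariants. The crux is therefore to show that enlarging the generating set to all trunks and all proper subsets does not change the gcd -- equivalently, that the tree--invariant corrections already lie in the ideal generated by the lower Milnor numbers -- so that $\Delta_Z(I;j)$ equals $\Delta_\mu$ as an integer and not merely up to one--sided divisibility. This is precisely the indeterminacy--consistency statement advertised in the abstract, which I would establish separately and then invoke here to close the argument.
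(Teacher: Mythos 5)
Your overall strategy coincides with the paper's: reduce to a string--link representative, apply the Kravchenko--Polyak identity \eqref{eq2:Z_I;j=mu_I;j}, and then match the two indeterminacies. But there is a genuine gap at exactly the step you yourself flag as ``the hard part'': you never prove $\Delta_Z(I;j)=\Delta_\mu(I-\{j\};j)$, deferring it to an ``indeterminacy--consistency statement'' to be established separately. That equality is the entire substance of the corollary, and your sketch of how it would go misidentifies the mechanism. You write that for a non-minimal trunk $k$ one controls $Z_{J;k}$ ``only modulo lower tree invariants,'' so that one must show the corrections lie in the ideal generated by lower Milnor numbers. In fact no corrections appear: property ({\bf s1}), Equation \eqref{eq:s1}, gives an \emph{exact} factorization $Z_{J;k}(\ell)=Z_{J^+;k}(\ell)\,Z_{J^-;k}(\ell)$ for a middle trunk, where the trunk is maximal in $J^+$ and minimal in $J^-$, and Corollary \ref{cor:Z-mu-prod} identifies each extreme--trunk invariant exactly with a Milnor number of a proper cyclic subsequence. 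Hence every generator of $\Gamma_Z$ with a middle trunk is divisible by extreme--trunk generators, so the gcd is unchanged upon restricting to trunks $k^+=\max(J)$, $k^-=\min(J)$; those remaining generators are Milnor numbers of cyclically permuted proper subsequences, and cyclic symmetry \eqref{eq:bar-mu-cyclic} matches this generating set with Milnor's $\Gamma_\mu$. Without these product formulas your proposed induction has nothing to run on.

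A secondary issue: your first step asserts the exact integer identity $\langle Z_{I;j},G_L\rangle=Z_{I;j}(\ell)$, where $\ell$ is obtained by ``severing'' $G_L$ at the basepoints. Severing the Gauss diagram is purely combinatorial, but the severed diagram need not be realizable as a plane diagram of an honest string link (for $n\geq 2$ the basepoints need not all be accessible from a single face of the diagram), and Theorem \ref{thm:tree=mu} applies only to genuine string links. The paper avoids this by first using link homotopy and the Main Theorem's invariance to assume $L\simeq\widehat{\ell}$ and to evaluate on the closure diagram, for which $G_{\widehat{\ell}}=G_\ell$; the price is that the identification holds only modulo $\Delta_Z(I;j)$, which is all the corollary needs. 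Relatedly, your passage from $\mu_{I-\{j\};j}(\ell)$ to $\overline{\mu}_{I-\{j\};j}(L)$ silently uses Levine's relation \eqref{eq:bar-mu-sigma}; Milnor's closed-link invariant is defined via the link group, not by cutting open, so this step must be cited explicitly rather than treated as a definition.
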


Recall that the $\overline{\mu}$--invariants are important invariants of homotopy links. In particular, they are capable of distinguishing $3$--component homotopy links as proven in \cite{Milnor:1954}. The indeterminacy of $\overline{\mu}$--invariants was  studied by several  authors most notably by Levine, see \cite{Levine:1988a, Levine:1988} and references therein. It is not known however, if an appropriate form of indeterminacy, would provide a complete set of numerical link homotopy invariants. More recently Kotorii \cite{Kotorii:2013} defines $\overline{\mu}$--invariants for Turaev's nanophrases \cite{Turaev:2007}, via Magnus expansion adapted to nanophrases. In contrast, our definition is  a direct generalization of Kravchenko--Polyak tree invariants  and geared towards further applications in geometric knot theory, \cite{Komendarczyk-Michaelides:2016}. For a related connection between trees  and Milnor invariants of string links one may refer to the recent work in \cite{Koytcheff-Volic:2015}, and other relevant approaches, e.g. in \cite{Mellor-Melvin:2003} and \cite{Turaev:1976}.

The techniques presented in this paper are geometric and depend on certain natural tree diagram decompositions, related ideas can be found in the work of {\" O}stlund \cite{Ostlund:2004}. As a practical outcome we also  obtain recursive procedure for tree invariants (previously computed directly for $n=3$ and $n=4$ in \cite{Kravchenko-Polyak:2011}) which yields a convenient computational  algorithm (see Proposition \ref{prop:recursive}).

The paper is organized as follows; In Section \ref{S:milnor}, we review the original construction of $\overline{\mu}$--invariants for links and string links as presented in  \cite{Milnor:1957,Levine:1988}. The construction of tree invariants by Kravchenko and Polyak \cite{Kravchenko-Polyak:2011} is reviewed in Section \ref{S:tree-invariants}. Section \ref{S:recursive} provides definitions of natural tree stacking operations and related technical results needed in Section \ref{S:proof}, it also includes the above mentioned algorithm for generating arrow polynomials of tree invariants for any $n$. Proof of Main Theorem and Corollary \ref{cor:bar-mu-bar-Z} is given in Section \ref{S:proof}.

 We finally mention, that the applications of Milnor linking numbers are fairly broad; including distant areas such as topological fluid dynamics or plasma physics, c.f. \cite{Evans-Berger:1992,Laurence-Stredulinsky:2000, DeTurck-Gluck-Komendarczyk-Melvin:2013a,Komendarczyk:2009}. In the forthcoming paper: \cite{Komendarczyk-Michaelides:2016}, we show how the arrow diagrammatic formulation of linking numbers can be applied to address a  geometric question of Freedman and Krushkal \cite{Freedman-Krushkal:2014}, concerning estimates for thickness of $n$--component links.

\section{Acknowledgements}
Both authors acknowledge the generous support of NSF DMS 1043009 and DARPA YFA N66001-11-1-4132 during the years 2011-2015. Both authors wish to thank the anonymous referee for the detailed report which, among other things, pointed out interpretations in the language of operads, see Remark \ref{rem:operad}. 

A version  of Main Theorem, without a precise characterization of indeterminacy, was obtained by the second author in his doctoral thesis, \cite{Michaelides:2015}. 

\section{Linking numbers of closed links and string links}\label{S:milnor}

In this section we review the construction of 
Milnor invariants,  \cite{Milnor:1954, Milnor:1957}. Denote  by  $L=L_1\cup\ldots\cup L_n$ an $n$--component ordered, oriented link in $S^3$. Recall that any diagram of $L$ yields the Wirtinger presentation of $\pi=\pi_1(S^3-L)$, where the generators are {\em meridians}, one for each arc in the diagram and the relations are derived from the crossings in the diagram, \cite{Hatcher:2002}. Different meridians of a given component are conjugate to each other in $\pi$, and a choice of basepoint on the $j$--component $L_j$ indicates a preferred meridian $m_j$. In $\pi$ we also distinguish the {\em parallels}, i.e. push-offs of $L_j$ denoted by $l_j$ satisfying $\text{lk}(l_j,L_j)=0$. Consider $F=F(m_1,\ldots, m_n)$, the free group generated by the preferred meridians  $\{m_j\}$. In \cite{Milnor:1954} Milnor proved that the universal homomorphism $F\longmapsto \pi$ descends to an epimorphism of the lower central series quotients: 
\[
\phi:F/F_q\longmapsto \pi/\pi_q,\qquad  \text{for any $q$},
\]
 (in fact, due to the later result of Stallings \cite{Stallings:1965}, $\phi$ is an isomorphism). Recall that given a group $G$, the lower central series is given as $G_1=G$, $G_2=[G,G_1]$, \ldots $G_p=[G,G_{p-1}]$,\ldots . 
 Thus, for any $q$ there exists $l^q_j\in F(m_1,\ldots, m_n)$ representing the parallel $l_j$ modulo the $q$th stage  of the lower central series of $\pi$  (i.e. $\phi(l^q_j F_q)=l_j \pi_q$). Every element of $F$ can be regarded as a unit in\footnote{the ring of power series in $n$ non-commuting variables $X_i$.}  $\mathbb{Z}\langle X_1,...X_n\rangle$  via the {\em Magnus expansion}, which is a ring homomorphism
\begin{equation}\label{eq:theta_F}
\begin{split}
\theta_F: &\mathbb{Z}F \longrightarrow \mathbb{Z}\langle X_1,...X_n\rangle,\\
  & m_i \longmapsto 1+X_i,\qquad m_i^{-1} \longmapsto 1 - X_i + X_i^2 - X_i^3+ \ldots
\end{split}
\end{equation} 
 embedding $F$ into $\mathbb{Z}\langle X_1,...X_n\rangle$ as its group of units. Given the $j$th parallel $l_j$, as above, we have the expansion
\begin{equation}\label{eq:l^q_j-expansion}
 \theta_F(l^q_j)=1+\sum_{\{i_1,\ldots, i_s\}\subset [n]; s\geq 1} \mu_{i_1\,\ldots\, i_s;j}\, X_{i_1} X_{i_2}\ldots X_{i_s},\qquad [n]=\{1,\ldots,n\}.
\end{equation}
The coefficients $\mu_{\mathtt{I};j}=\mu_{i_1,\ldots, i_s;j}$ are defined for each ordered sequence of integers $\mathtt{I}=(i_1,\ldots, i_s)$ $1\leq i_k\leq n$. Following \cite{Milnor:1957}, let
\begin{equation}\label{eq:Gamma-mu}
\begin{split}
\Gamma_\mu(\mathtt{I};j)  =\{ & \mu(k_1,k_2,...,k_{r-1};k_r) \ |\ \text{where $\{k_1,k_2,...,k_{r-1},k_r\}$, $2 \leq r < s$ ranges over }\\ &\quad \text{all subsequences of $(i_1, i_2, ...,i_s,j)$ obtained by deleting at least one }\\
&\quad \text{of its elements and permuting the remaining elements cyclically}\},
\end{split}
\end{equation}
and
\begin{equation}\label{eq:bar-mu-Delta(I)}
 \overline{\mu}_{\mathtt{I};j} \equiv \mu_{\mathtt{I};j} \mod\ \Delta_\mu(\mathtt{I};j),\qquad\quad \Delta_\mu(\mathtt{I};j)=\gcd(\Gamma_\mu(\mathtt{I};j)).
\end{equation}
 In \cite{Milnor:1957}, Milnor proved that, for $s<q$, $\mathtt{I}=\{i_1,i_2,\ldots,i_s\}$, the residue classes $\overline{\mu}_{\mathtt{I};j}$ are isotopy\footnote{in fact they are concordance invariants as follows from the result of Stallings in \cite{Stallings:1965}} invariants of $L$, and if the indices in $\{\mathtt{I},j\}$ are all distinct, $\overline{\mu}_{\mathtt{I};j}$ are link-homotopy invariants. The residue classes $\overline{\mu}_{\mathtt{I};j}(L)=\overline{\mu}_{\mathtt{I};j}$ are commonly known as the {\em Milnor linking numbers} or {\em $\overline{\mu}$--invariants}, and $\Delta_\mu(\mathtt{I};j)$ is called the {\em indeterminacy}. One obvious property of 
$\overline{\mu}$--invariants is equivariance under permutations $\gamma\in\Sigma_n$,  i.e.
\begin{equation}\label{eq:mu-perm-indices}
\overline{\mu}_{i_1\,i_2\,\ldots\, i_r; j}(L^\gamma)=\overline{\mu}_{\gamma(i_1)\,\gamma(i_2)\,\ldots\, \gamma(i_r); \gamma(j)}(L), 
\end{equation}
where $L^\gamma$ is the link $L$ with permuted components; $L^\gamma_i=L_{\gamma(i)}$.
Further relations are proven in \cite[p. 294]{Milnor:1957}, for example cyclic symmetry: 
\begin{equation}\label{eq:bar-mu-cyclic}
\overline{\mu}_{i_1,i_2,\ldots,i_{r-1};i_r}(L)=\overline{\mu}_{i_2,i_3,\ldots,i_{r};i_1}(L).
\end{equation}
\no In fact,  due to cyclic symmetry \eqref{eq:bar-mu-cyclic}, we may consider a smaller set 
\begin{equation}\label{eq:Gamma'-mu}
	\begin{split}
		\Gamma'_\mu(\mathtt{I};j) & =\{ \mu(k_1,k_2,...,k_{r-1};k_r) \ |\ \text{where $\{k_1,k_2,...,k_{r-1},k_r\}$, $2 \leq r < s$ ranges}\\ &\quad\quad \text{over all proper subsequences of $(i_1, i_2, ...,i_s,j)$}\},\\
		 &\quad\quad \text{and}\ \Delta'_\mu(\mathtt{I};j):=\gcd(\Gamma'_\mu(\mathtt{I};j)).
	\end{split}
\end{equation}
From the basic properties of $\text{gcd}$, one obtains 
\[
 \Delta'_\mu(\mathtt{I};j)=\Delta_\mu(\mathtt{I};j).
\]

For {\em string links}, the construction of $\mu$--invariants is completely analogous. Recall that an $n$--component string link (see Figure \ref{fig:trees}(right) for an example of a string link diagram) is a smooth embedding $\ell:\sqcup^{n}_{k=1} I_k\longmapsto D^2\times I$, of $n$ copies $I_1$,\ldots $I_n$ of the unit interval $I$ into the cylinder $C=D^2\times I$, such that each $\sigma_k=\sigma|_{I_k}$ is anchored at the bottom and top of the cylinder at fixed points $\{a_k\}$, i.e. for each $k=1,\ldots,n$;
\[
 \ell_k(0)=(a_k,0),\qquad \ell_k(1)=(a_k,1).
\]
Each string link $\ell$ can be closed up into a link $L=\widehat{\ell}$ by adding unlinked connecting strands outside the cylinder $D^2\times I\subset \R^3$, this {\em closure} operation is denoted by $\widehat{\,\cdot\,}$ in \cite{Habegger-Lin:1990}. As before, one considers the group $\pi=\pi_1(C\setminus \sigma)$, this time there is a canonical choice of {\em meridians} $\{m_k\}$ represented by loops in $D^2\times\{1\}$ based at a fixed point $x_0\in \partial(D^2\times\{1\})$, with $\text{lk}(m_k,\ell_k)=+1$. One also defines {\em canonical parallels}; $l_k$ as loops in $C$ based at $x_0$ and closed up by fixed arcs in the boundary of $C$, with $\text{lk}(l_k,\ell_k)=0$. Again each parallel $l_j$ has its expansion \eqref{eq:l^q_j-expansion}, modulo the $q$--stage of the lower central series of $F$ and $\pi$. Differently from the case of closed links, the coefficients 
$\mu_{\mathtt{I};j}(\ell)=\mu_{I;j}$ do not require the indeterminacy and yield the isotopy invariants of string links, \cite{Levine:1988} and for distinct indices in $\{\mathtt{I};j\}$, they define link-homotopy invariants. Given a link $L=\widehat{\ell}$, obtained as the closure of a string link $\ell$, we have the following identity, \cite{Levine:1988}:
\begin{equation}\label{eq:bar-mu-sigma}
\overline{\mu}_{\mathtt{I};j}(L) \equiv \mu_{\mathtt{I};j}(\ell) \mod\ \Delta_\mu(\mathtt{I};j).
\end{equation}

We end this section by pointing out that the linking numbers $\mu_{\mathtt{I}; j}$ significantly depend on the order of
integers in  $(\mathtt{I};j)=(i_1\, i_2\,\ldots \, i_r,j)$ (e.g., in general $\mu_{i_1\,i_2\,\ldots \,i_r; j} (\ell) \neq \mu_{i_2\, i_1\,\ldots\, i_r ; j} (\ell)$.)

\section{Tree invariants}\label{S:tree-invariants}
In Section \ref{S:intro}, we introduced planar tree diagrams and tree invariants, in this section we provide their formal definitions, following the source in \cite{Kravchenko-Polyak:2011}. 
 
Let\footnote{We assume a slightly different convention than in \cite{Kravchenko-Polyak:2011}, see Remark \ref{rem:index-notation}.} $I = \{ i_1, i_2,...,i_r \}$,  $1 \leq i_1 < i_2 < ...< i_r \leq n$ and $j\in  I$, a {\em{tree diagram}} $A$ with {\em leaves} on the components numbered by $I$ and a trunk on the $j$--component, is an arrow diagram which satisfies the following conditions:

\begin{enumerate}
	\item[({\bf{d1}})]  An arrowtail $t(\alpha)$ and an arrowhead $h(\alpha)$ of an arrow $\alpha\in A$ belong to different strings;
	\item[({\bf{d2}})]   There is exactly one arrow with an arrowtail on the $i$-th string, if $i \in I-\{j\}$ and no such arrow if $i\notin I-\{j\}$;
	\item[({\bf{d3}})]   All arrows have arrowheads on strings indexed by $I$;
	\item[({\bf{d4}})]   All arrowheads precede the (unique) arrowtail for each $i \in I-\{j\}$, as we follow the $i$-th string along its (downward) orientation.
\end{enumerate}

Define the {\em{degree}} of an arrow diagram $A$ to be the total number of arrows in the diagram, and note that it is always equal to $r-1$.
One may visualize any tree diagram $A$, as the rooted tree graph: $T(A)$ obtained from $A$ by removing the part of each of the  components that lies below the corresponding (unique) arrowtail (by {\bf (d2)} and {\bf (d4)}). Then, $T(A)$ is graph--isomorphic to a  rooted tree with $r$ leaves and the root on the $j$--component. 
Every tree  is a planar graph, however we will refer to a tree diagram $A$ as {\em{planar}}, if in its planar realization the order of leaves coincides with the initial ordering $i_1 < i_2 <... <i_l < j < i_{l+1}< ... <i_{r}$ of the components. Note that the above axioms imply that every arrow $\alpha\in A$ is uniquely determined by its coordinates $(i,j)$. In Figure \ref{fig:trees-planar}, there are two planar and a non-planar tree diagrams, together with the trees obtained from each of them. In the remaining part of this paper, for practical purposes, we will make no distinction between $T(A)$ and $A$.
\begin{figure}[!ht] 
	\centering
	$\vcenter{\hbox{\includegraphics[width=0.22\textwidth]{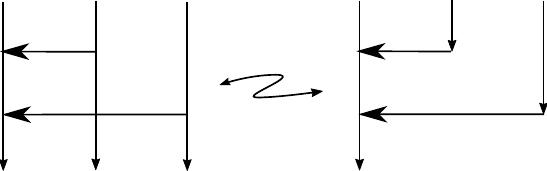}}}$\quad\qquad
	$\vcenter{\hbox{\includegraphics[width=0.6\textwidth]{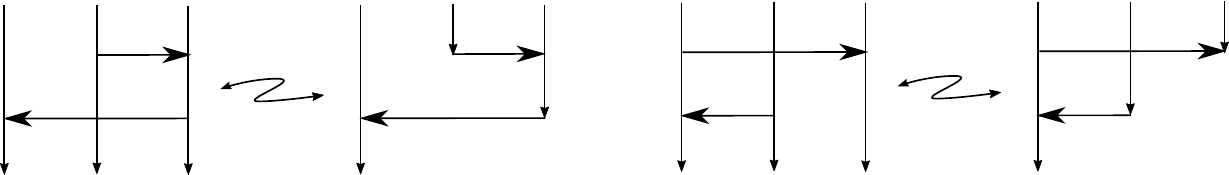}}}$
	\caption{From a diagram $A$ to a tree $T(A)$ (left). Planar and non-planar tree diagrams (middle and right).}\label{fig:trees-planar}
\end{figure} 
As in Section \ref{S:intro}, $\mathcal{A}(I; j)$ stands for the set of all planar tree diagrams with leaves on $I$ and the trunk on the $j$-th component and 
\[
\mathcal{A}_j = \cup_{I} \mathcal{A}(I;j).
\]
\begin{rem}[Notation]\label{rem:index-notation}
	Differently than in \cite{Kravchenko-Polyak:2011}, we will treat the trunk of a tree diagram as one of its leaves.  It makes only a small notational difference and simplifies further considerations.
	For instance $Z_{1,2,3,4;2}=Z_{134;2}$, where $Z_{134;2}$ agrees with Kravchenko and Polyak convention (which, whenever used, will skip the comma separators). Further, we also abbreviate
	\[
	\mathcal{A}(n;j):=\mathcal{A}([n];j),\qquad Z_{n;j}:=Z_{[n];j}=Z_{12\ldots\widehat{j}\ldots n;j},
	\] 
	where $[n]=\{1,2,\ldots, n\}$.
\end{rem}
The definition of tree invariants of \cite{Kravchenko-Polyak:2011} involves the notion of disassociative algebras \cite{Loday-Vallette:2012,Loday-Frabetti-Chapoton-Goichot:2001}, recall that diassociative algebra over a ground field $k$ is a $k$-space $V$ equipped with two $k$-linear maps
\[
 \vdash\,:V\otimes V\longrightarrow V,\qquad \dashv\,:V\otimes V\longrightarrow V,
\]
which satisfy the following properties
\begin{equation}\label{eq:Dias-rel}
\begin{split}
(x\dashv y)\dashv z=x\dashv (y\dashv z), & \quad (x\dashv y)\dashv z=x\dashv (y\vdash z),\\
(x\vdash y)\dashv z=x\vdash (y\dashv z), & \quad (x\dashv y)\vdash z=x\vdash (y\vdash z), \quad (x\vdash y)\vdash z=x\vdash (y\vdash z).
\end{split}
\end{equation}
Depicting products $x\vdash y$, $x\dashv y$ as elementary trees, compositions
of operations $\vdash$, $\dashv$ can be interpreted as tree {\em grafting}, \cite{Loday-Vallette:2012}. Following, \cite{Kravchenko-Polyak:2011}, consider $Dias(n)$ to be the quotient of a vector space generated by planar rooted
trees with $n$ leaves: $\mathcal{A}(n)=\bigcup_j \mathcal{A}(n;j)$ by the relations \eqref{eq:Dias-rel} of the algebra.

Let $G_\ell$ be a Gauss diagram of a string link $\ell$, see Figure \ref{fig:trees}(right) for an example.  Kravchenko and Polyak  \cite{Kravchenko-Polyak:2011}, consider the following element of a quotient algebra $Dias(n)$, \cite[p. 306]{Kravchenko-Polyak:2011},
\begin{equation}
Z_j(G_\ell)=\sum_{A \in \mathcal{A}_j} \sign(A)\langle A, G_\ell \rangle \cdot [A], 
\end{equation}
\no where $[A]$ denotes an equivalence class in $Dias(n)$, the pairing $\langle A, G_\ell \rangle$ is defined in \eqref{eq:<A,G>}, and 
\begin{equation}\label{eq:sign(A)-def}
\text{\rm sign}(A) =(-1)^q,\qquad q=\#\ \text{of arrows in $A$ pointing to the right},
\end{equation}
is called the {\em sign of the diagram $A$}. 

\begin{thm}[Kravchenko and Polyak, \cite{Kravchenko-Polyak:2011}]\label{thm:Z_j}
Let $\ell$ be an $n$--component string link and $G_\ell$ its Gauss diagram. Then $Z_j (\ell)=Z_j (G_\ell)$ is a $Dias(n)$--valued homotopy invariant of $\ell$.  
\end{thm}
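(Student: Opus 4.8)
The plan is to show that $\langle Z_j, G_\ell\rangle$, regarded as an element of $Dias(n)$, is unchanged under the moves on Gauss diagrams that generate link homotopy of string links. Following Goussarov, Polyak and Viro, these are the three oriented Reidemeister moves R1, R2, R3 together with the self-crossing change move, the latter being exactly what upgrades regular isotopy to link homotopy. Invariance under all of these is the same as independence of $Z_j(G_\ell)$ from the chosen Gauss diagram, so it proves the theorem. For each move I would compare the representations $\phi\colon A\to G_\ell$ before and after the move, for every tree diagram $A\in\mathcal{A}_j$, and track the resulting effect both on the signed count $\langle A, G_\ell\rangle$ and on the class $[A]\in Dias(n)$.

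First I would dispose of the two ``easy'' moves using axiom ({\bf d1}). The R1 move inserts or deletes a kink, and the self-crossing change move flips the sign of a single crossing of a strand with itself; in both cases the only arrow affected is one whose head $h(\alpha)$ and tail $t(\alpha)$ lie on the \emph{same} string. Since every arrow of a tree diagram joins two \emph{distinct} strings by ({\bf d1}), no such arrow can ever lie in the image of a representation of a tree diagram. Hence these two moves alter none of the counts $\langle A, G_\ell\rangle$, and $Z_j$ is trivially invariant under them. Next comes R2, which introduces or removes a pair of arrows $g,g'$ having the same ordered pair of endpoint-strings and the same direction but opposite signs. Because an arrow of a tree diagram is determined by its coordinates $(i,i')$ and, by ({\bf d2}), a string carries at most one arrowtail, no single tree diagram can have two arrows simultaneously occupying $g$ and $g'$; so every affected representation uses exactly one of them. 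Interchanging $g$ and $g'$ is then a fixed-point-free involution on the affected representations that preserves all the ordering conditions of ({\bf d4}) (the two crossings are adjacent along both strings) while reversing $\sign(\phi)$, since $\sign(g)=-\sign(g')$. The contributions cancel in pairs, so each $\langle A, G_\ell\rangle$, and hence $Z_j$, is preserved.

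The hard part is R3, and this is where passage to $Dias(n)$ is essential: the signed counts $\langle A, G_\ell\rangle$ of individual tree diagrams genuinely change under R3, and only their images in the quotient algebra are invariant. The R3 move permutes the three mutual crossings among a triple of strands, which corresponds to re-attaching a subtree to the trunk through a different bracketing of the tree-grafting product. My plan is to show that the net variation $\delta Z_j$ caused by an R3 move is, term by term, a sum of expressions of the shape ``graft according to one pattern minus graft according to the other pattern,'' and that each such expression is precisely one of Loday's disassociative relations imposed in $Dias(n)$. Thus $\delta Z_j=0$ in the quotient, even though the corresponding element of $\Z\mathcal{A}_j$ is nonzero.

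I expect the matching in the R3 step to be the main obstacle. One must check that the three strands involved, the orientations of all incident arrows, and the position of the trunk relative to the triangle combine so that, after cancelling the representations that persist unchanged across the move, the leftover terms are exactly the grafting (in)equalities defining $Dias(n)$ and that no uncancelled numerical residue survives. Once R1, R2, R3, and the self-crossing change have all been accounted for in this way, the invariance of $Z_j(\ell)$ as a $Dias(n)$-valued link-homotopy invariant of $\ell$ follows.
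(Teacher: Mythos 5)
Your overall strategy---reduce link homotopy of string links to the moves $\mathbf{r1}$, $\mathbf{r2}$, $\mathbf{r3}$ plus self-crossing changes, and check invariance move by move by comparing representations---is exactly the strategy behind this theorem (the paper itself only cites Kravchenko--Polyak for it, but reproduces the same argument for closed links in Lemma \ref{lem:reidemeister}). Your handling of the easy moves is correct and matches the paper's: axiom ({\bf d1}) disposes of $\mathbf{r1}$ and of self-crossing changes, since no arrow of a tree diagram can ever be matched to an arrow of $G_\ell$ with both endpoints on one string; and for $\mathbf{r2}$ the sign-reversing involution $g_{+}\leftrightarrow g_{-}$ on affected representations (using that a tree diagram has at most one arrow with given coordinates) kills those contributions pairwise.

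The gap is $\mathbf{r3}$, which you correctly identify as the heart of the proof but do not carry out: the sentence ``one must check that \ldots the leftover terms are exactly the grafting (in)equalities defining $Dias(n)$'' \emph{is} the proof, and it is absent. Concretely, what is required (and what Lemma \ref{lem:reidemeister} does) is: (i) a case analysis over which of the arrows $\alpha\sim(i_1,i_2)$, $\beta\sim(i_1,i_3)$, $\gamma\sim(i_2,i_3)$ a planar tree can contain---never all three, by planarity, and for several pairs both sides vanish identically; (ii) a further split over the six relative orderings of the strands $i_1,i_2,i_3$, identifying in each case the two local two-arrow subdiagrams $D$, $D'$ whose presence allows a nonzero partial count; and (iii) an explicit bijection $f$, given by replacing the subdiagram $D$ inside a tree by $D'$, together with the sign bookkeeping ($\sign(D)=\pm\sign(D')$ depending on the case) and a count-matching redefinition of embeddings, showing the two families of contributions cancel. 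Note also that this matching yields something sharper than your picture suggests: $A$ and $f(A)$ have the \emph{same} leaf set and trunk, so the variation cancels numerically within each coefficient $Z_{I;j}=\sum_{A\in\mathcal{A}(I;j)}\sign(A)\langle A,G_\ell\rangle$; the role of $Dias(n)$ is then only that $[A]=[f(A)]$ (classes depend just on the leaf set), which packages these integer cancellations into the algebra-valued statement and simultaneously gives the integer-valued Theorem \ref{thm:Z_I;j-invariants}. Your expectation that the cancellation lives only in the quotient is therefore consistent but weaker than what the argument actually produces---and in either formulation, without step (iii) the proof is not complete.
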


Since the equivalence class $[A]$ of a tree $A$, with a trunk on the $j$-th component, depends only on the set of its leaves, all $A \in \mathcal{A}(I, j)$ represent the same equivalence class and the sum given by   
\[
Z_{I, j}(G_\ell) = \sum_{A \in \mathcal{A}(I;j)} \sign(A) \langle A, G_\ell \rangle
\]
 is a homotopy invariant as stated by Theorem \ref{thm:Z_I;j-invariants} of Section \ref{S:intro}. 
Invariants $\{Z_{I, j}(\ell)\}$ are finite type thus by \cite[Corollary 6.4]{Habegger-Masbaum:2000} they can be expressed as polynomials in Milnor $\mu$-invariants of string links defined in Section \ref{S:milnor}. 
 Because the index $\mathtt{I}=(i_1,\ldots,i_{n-1})$ for $\{\mu_{\mathtt{I};j}\}$ is ordered and 
$I=\{i_1,\ldots,i_{r}\}$ in $Z_{I;j}$ has the increasing order, by assumption, in general: $Z_{I;j}(\ell)\neq \mu_{I-\{j\};j}(\ell)$. However, if $I=\mathtt{I}$, i.e. $\mathtt{I}$ has the same order as $I$, then we have the following result
\begin{thm}[Kravchenko and Polyak, \cite{Kravchenko-Polyak:2011}]\label{thm:tree=mu}
	Let $\ell$ be an ordered string link on $n$ strings and let $1 \leq j = i_1 < i_2 < ... < i_r \leq n$. Then
	\begin{equation}\label{eq2:Z_I;j=mu_I;j}
	Z_{I; j} (\ell) = \mu_{i_2 ... i_r;j} (\ell).
	\end{equation} 
\end{thm}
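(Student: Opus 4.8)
The plan is to identify the Milnor coefficient $\mu_{i_2\ldots i_r;j}(\ell)$, defined through the Magnus expansion \eqref{eq:theta_F}--\eqref{eq:l^q_j-expansion} of the $j$-th parallel, with the signed tree count $\langle Z_{I;j},G_\ell\rangle$ read directly off the Gauss diagram. First I would express a zero-framed parallel $l_j$ as a word in Wirtinger meridian generators: traversing $\ell_j$ from its basepoint, each crossing at which $\ell_j$ passes under a strand $\ell_c$ contributes the meridian generator of the overpassing arc, raised to the crossing sign. Since distinct arcs of $\ell_c$ carry conjugate meridians, each such generator equals $u\,m_c^{\pm1}\,u^{-1}$, where $u$ is itself a parallel-type word recording the crossings met along $\ell_c$ up to that arc. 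Substituting these conjugation relations recursively unfolds $l_j$ into a nested product whose layers are indexed by the strands of the diagram.

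Next I would apply the ring homomorphism $\theta_F$ and extract the coefficient of the monomial $X_{i_2}X_{i_3}\cdots X_{i_r}$, which by \eqref{eq:l^q_j-expansion} is $\mu_{i_2\ldots i_r;j}(\ell)$. Using $\theta_F(u\,m_c^{\pm1}\,u^{-1})=1+\theta_F(u)\,(\pm X_c+\cdots)\,\theta_F(u)^{-1}$ and iterating the unfolding, each surviving contribution selects exactly one crossing at every level of the nesting, and the incidence pattern of these selections is a rooted tree with root on the $j$-strand and one leaf for each of the variables $X_{i_2},\ldots,X_{i_r}$. I would then check that, because the monomial is taken in the fixed increasing order $i_2<\cdots<i_r$ below the trunk $j=i_1$, the only trees that can contribute are the planar ones with leaves in the prescribed order; this is precisely where the hypothesis $j=i_1<i_2<\cdots<i_r$ is used, and it is what makes \eqref{eq2:Z_I;j=mu_I;j} hold on the nose rather than merely after a reordering of indices. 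This should produce a bijection between the monomials contributing $X_{i_2}\cdots X_{i_r}$ and the representations $\phi\colon A\to G_\ell$ of planar trees $A\in\mathcal{A}(I;j)$, with the axioms {\bf (d1)}--{\bf (d4)} recording exactly the incidence and ordering constraints forced by the nesting.

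Finally I would match the two sign contributions. The product of crossing signs accumulated by $\theta_F$ along a selected term should reproduce $\sign(\phi)$ of the corresponding representation, while the combinatorial sign arising from the orientation of each arrow relative to the downward traversal should reproduce $\sign(A)=(-1)^q$ with $q$ the number of right-pointing arrows. Assembling these yields $\mu_{i_2\ldots i_r;j}(\ell)=\sum_{A\in\mathcal{A}(I;j)}\sign(A)\langle A,G_\ell\rangle=Z_{I;j}(\ell)$.

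I expect the decisive difficulty to be the bookkeeping in this bijection-and-sign step: one must verify that the recursive unfolding of conjugated meridians produces all, and only, the planar trees with the prescribed leaf order, with no spurious or cancelling terms, that the nesting rooted at $j$ genuinely matches the head/tail conventions built into $\mathcal{A}(I;j)$, and that the crossing-sign product and the right-pointing-arrow count combine to the correct total sign. A convenient way to control this is by induction on $r$. The base case $r=2$ is the identity $Z_{\{j,i_2\};j}(\ell)=\mu_{i_2;j}(\ell)=\text{lk}(\ell_j,\ell_{i_2})$, the signed count of single arrows between the two strands. For the inductive step I would peel off the outermost layer of the parallel $l_j$ (equivalently, graft a subtree at the root), reducing the coefficient extraction to lower-degree parallels and correspondingly to tree invariants on smaller leaf sets, whose identification with the lower Milnor numbers is then available by the inductive hypothesis.
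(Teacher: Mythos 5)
Your overall strategy (unfold the parallel $l_j$ via Wirtinger conjugations, apply the Magnus expansion \eqref{eq:theta_F}, and match the surviving terms of \eqref{eq:l^q_j-expansion} with tree representations in the Gauss diagram) is genuinely different from the paper's proof, which is axiomatic: both $Z_{I;j}$ and $\mu_{i_2\ldots i_r;j}$ satisfy Polyak's skein relation and vanish on string links whose $j$-th strand passes in front of all the others, and these two properties determine the invariant. Unfortunately your route, as described, has a genuine gap, located exactly at the step you flagged as ``bookkeeping'': the bijection you posit does not exist. The word for $l_j$ is the ordered product of conjugated meridians picked up at the crossings where $\ell_j$ passes \emph{under} another strand; in Gauss diagram language these are arrows with their \emph{tails} on the $j$-th string. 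Iterating the conjugation unfolding, every surviving term of $\theta_F(l_j)$ selects, on each strand it visits, crossings where that strand is the \emph{underpass} with respect to its children, i.e. it produces rooted trees whose arrows point \emph{away} from the root $j$ (each non-root strand carries exactly one arrowhead, coming from its parent). The diagrams of $\mathcal{A}(I;j)$ are oriented the opposite way: by axiom ({\bf d2}) the trunk carries \emph{no} arrowtails and each non-trunk strand carries exactly one arrowtail, so in any representation $\phi\colon A\longrightarrow G_\ell$ the trunk strand is the \emph{overpass} at every crossing used. Hence the Magnus terms and the tree representations count two different families of crossing configurations, and no sign-preserving bijection between them can exist.

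This is visible already in your base case $r=2$: the coefficient of $X_{i_2}$ in $\theta_F(l_j)$ is the signed count of crossings where $\ell_j$ goes under $\ell_{i_2}$, whereas $Z_{\{j,i_2\};j}(\ell)$ is the signed count of crossings where $\ell_{i_2}$ goes under $\ell_j$. These numbers are equal because both compute $\text{lk}(\ell_j,\ell_{i_2})$ --- an identity that uses realizability of $G_\ell$, not a matching of terms (the two sets of crossings can even have different cardinalities). For $r\geq 3$ the same mismatch persists: your unfolding, carried out correctly, proves that $\mu_{i_2\ldots i_r;j}(\ell)$ equals the pairing of $G_\ell$ with the \emph{arrow-reversed} tree polynomial, which is a different arrow polynomial from $Z_{I;j}$ (the two already disagree on non-realizable abstract Gauss diagrams), and the equality of the two pairings on realizable diagrams is essentially the content of the theorem itself. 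So the proposal proves a cousin of \eqref{eq2:Z_I;j=mu_I;j} and leaves the actual identification open. To close the gap you would either need a separate argument identifying the reversed-tree count with $\langle Z_{I;j},G_\ell\rangle$ on realizable diagrams, or you can switch to the paper's route: check that $Z_{I;j}$ obeys the same skein relation as $\mu$ under crossing changes and the same normalization, and conclude by uniqueness.
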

\no (see also Corollary \ref{cor:Z-mu-prod}). 
The proof of Theorem \ref{thm:tree=mu} follows from the fact that the tree invariants $Z_{I, j}$ and $\mu_{I,j}$ satisfy the same skein relation (see \cite{Polyak:2005}), and have the same normalization, i.e  $Z_{I;j} (\ell) = \mu_{I-\{j\}; j} (\ell) = 0$ for all string links $\ell$ with the $j$-th component passing in front of all the others.  Further relations for tree invariants were proven in \cite[Proposition 4.2]{Kravchenko-Polyak:2011} and, for convenience, are stated below (where $I=\{i_1,\ldots,i_r\}$, $1\leq i_1<\ldots<i_r\leq n$).
\begin{itemize}
	\item[({\bf s1})] For any $1<k<r$ and  $I^+=\{i_1,\ldots,i_{k-1},i_k\}$, $I^-=\{i_k,i_{k+1},\ldots,i_r\}$,
	\begin{equation}\label{eq:s1}
	Z_{I;i_k}(\ell)=Z_{I^+;i_k}(\ell)\,Z_{I^-;i_k}(\ell).
	\end{equation}
	\item[({\bf s2})] Let $\overline{\ell}$ be a string link obtained from $\ell$ by  reflecting the ordering, i.e. $\overline{\ell}_i=\ell_{\overline{i}}$, where $\overline{i}=n+1-i$. Then,
	\begin{equation}\label{eq:s2}
	Z_{I;j}(\overline{\ell})=(-1)^r Z_{\overline{I};\overline{j}}(\ell).
	\end{equation}
	\item[({\bf s3})] 	Given a cyclic permutation $\sigma=(i_1\, i_2\,\ldots \, i_r)$, let $\ell^\sigma$ be	a string link with renumbered strings according to $\ell^\sigma_i=\ell_{\sigma(i)}$. Then
	\begin{equation}\label{eq:s3}
	Z_{I;i_r}(\ell^\sigma)=Z_{I;i_1}(\ell).
	\end{equation}
\end{itemize}
\begin{rem}\label{rem:Z_I;j-product}
Note that ({\bf s1}) shows that a computation of $Z_{I;j}(\ell)$ with an arbitrary trunk $j\in I$, can be reduced to two cases: {\em 1.} the trunk is on the first component and {\em 2.} the trunk is on the last component.
\end{rem}
At the end of this section let us exhibit some lower degree invariants $Z_{I;j}$ (for $n=2 \: \text{and} \: 3$), as in \cite[p. 308]{Kravchenko-Polyak:2011}, and express them in terms of $\mu$--invariants. 

In the case where $n = 2$, we have the following tree invariants (using our notational conventions):
\begin{equation}
Z_{1,2;1} (\ell)=Z_{2;1} (\ell) = \Bigl\langle\vvcenteredinclude{0.27}{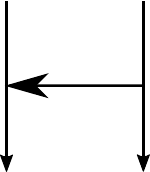}, G_\ell\Bigr\rangle, \quad Z_{1,2;2} (\ell)=Z_{1;2} (\ell) = - \Bigl\langle\vvcenteredinclude{0.27}{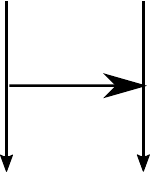},  G_\ell\Bigr\rangle,
\end{equation}
which agree with the linking number: 
\begin{equation}\label{eq:Z_2;1=lk}
Z_{2;1} (\ell) = \text{lk}(\ell_1 , \ell_2) = - Z_{1;2} (\ell).
\end{equation}
 For diagrams with two arrows, we have
\medskip
\begin{equation}\label{eq:tree-invariants-n=3}
\begin{split}
Z_{2 3 ;1} (\ell) = \Bigl\langle&\vvcenteredinclude{0.3}{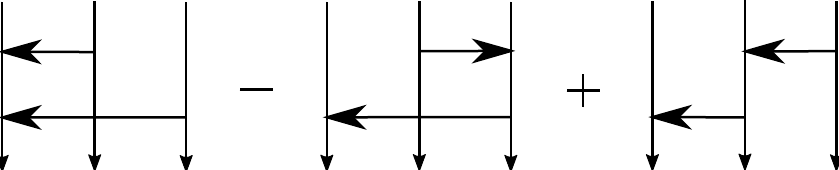}, G_\ell\Bigr\rangle,  \quad Z_{13 ;2} (\ell) = -  \Bigl\langle\vvcenteredinclude{0.3}{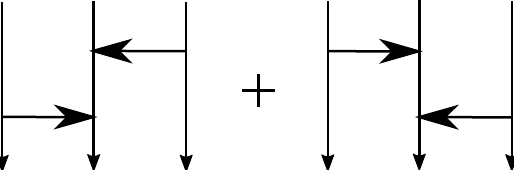}, G_\ell\Bigr\rangle,\\
&Z_{ 12 ;3} (\ell) = \Bigl\langle\vvcenteredinclude{0.3}{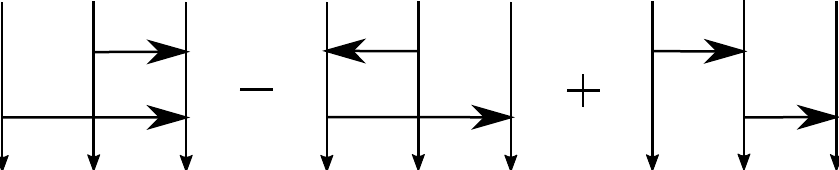}, G_\ell\Bigr\rangle .
\end{split}
\end{equation}
\smallskip

\no As shown in \cite[Proposition 4.2]{Kravchenko-Polyak:2011}, we have the following identities for the above invariants
\begin{equation}\label{eq:Z_123-mu_123}
 \begin{split}
 Z_{2 3 ;1} (\ell) & =\mu_{2 3; 1}(\ell),\\
 Z_{1 3 ;2} (\ell) & =Z_{1;2} (\ell) Z_{3;2} (\ell)= -\mu_{1; 2}(\ell)\mu_{3;2 }(\ell),\\
 Z_{1 2 ;3} (\ell) & =Z_{1 2 ;3} ((\ell^{\sigma^{-1}})^\sigma)=Z_{2 3;1}(\ell^{\sigma^{-1}})=\mu_{2 3; 1}(\ell^{\sigma^{-1}})=\mu_{\sigma(2) \sigma(3); \sigma(1)}(\ell)=\mu_{3 1; 2}(\ell)
 \end{split}
\end{equation}
The first equation is just \eqref{eq2:Z_I;j=mu_I;j}. In the second identity, we applied ({\bf s2}) and \eqref{eq:Z_2;1=lk}, in the third identity ({\bf s3}) and \eqref{eq:mu-perm-indices} with $\sigma=(1\, 2\, 3)$. The invariants $\mu_{3 1; 2}(\ell)$ and $\mu_{2 3; 1}(\ell)$ are not equal and in general differ by a sign and a sum of products of pairwise linking numbers of $\ell$. The computation in \eqref{eq:Z_123-mu_123} can be easily generalized as follows.
\begin{cor}\label{cor:Z-mu-prod}
	Let $I=\{i_1,\ldots,i_r\}$, $1\leq i_1<\ldots<i_r\leq n$, if $1<k<r$ then
	\begin{equation}\label{eq:Z-mu-prod}
		Z_{I;i_k}(\ell)=\mu_{i_3\,i_4\,\ldots\,i_k\,i_1;i_2}(\ell)\,\mu_{i_{k+1}\,\ldots\, i_r;i_k}(\ell).
	\end{equation}
	In particular for $k=1$ we obtain $Z_{i_2\,\ldots\,i_r;i_1}(\ell)=\mu_{i_2\,\ldots\,i_r;i_1}(\ell)$ and for $k=r$:   $Z_{i_1\,\ldots\,i_{r-1};i_r}(\ell)=$ $\mu_{i_3\,i_4\,\ldots\,i_k\,i_1;i_2}(\ell)$.
\end{cor}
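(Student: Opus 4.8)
The plan is to deduce the formula directly from the splitting relation \eqref{eq:s1} together with the two ``boundary'' evaluations in which the trunk sits on the smallest or the largest leaf of its index set; the worked case $n=3$ in \eqref{eq:Z_123-mu_123} is exactly the template to be generalized. Since $1<k<r$, Remark~\ref{rem:Z_I;j-product} and relation \eqref{eq:s1} with $I^+=\{i_1,\ldots,i_{k-1},i_k\}$ and $I^-=\{i_k,i_{k+1},\ldots,i_r\}$ give
\[
Z_{I;i_k}(\ell)=Z_{I^+;i_k}(\ell)\,Z_{I^-;i_k}(\ell),
\]
so it suffices to identify each factor with the corresponding $\mu$--invariant. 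The factor $Z_{I^-;i_k}$ is immediate: in $I^-$ the trunk $i_k$ is the smallest index, so Theorem~\ref{thm:tree=mu} (equation \eqref{eq2:Z_I;j=mu_I;j}) yields $Z_{I^-;i_k}(\ell)=\mu_{i_{k+1}\ldots i_r;i_k}(\ell)$, which is precisely the second factor in \eqref{eq:Z-mu-prod}.

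The substantive step, and the one I expect to be the main obstacle, is the evaluation of $Z_{I^+;i_k}$, where the trunk $i_k$ is now the \emph{largest} index of $I^+$. Here Theorem~\ref{thm:tree=mu} does not apply directly, and I would follow the cyclic manoeuvre already used for $Z_{12;3}$ in \eqref{eq:Z_123-mu_123}. Let $\sigma=(i_1\,i_2\,\ldots\,i_k)$ be the cyclic permutation of the indices of $I^+$. Applying \eqref{eq:s3} to $I^+$ after the substitution $\ell\mapsto\ell^{\sigma^{-1}}$, and using $(\ell^{\sigma^{-1}})^{\sigma}=\ell$, moves the trunk from $i_k$ to $i_1$:
\[
Z_{I^+;i_k}(\ell)=Z_{I^+;i_1}(\ell^{\sigma^{-1}}).
\]
Now the trunk is on the smallest index $i_1$, so Theorem~\ref{thm:tree=mu} gives $Z_{I^+;i_1}(\ell^{\sigma^{-1}})=\mu_{i_2 i_3\ldots i_k;i_1}(\ell^{\sigma^{-1}})$, and finally the permutation equivariance \eqref{eq:mu-perm-indices}, applied in the orientation $\mu_{\mathtt I}(\ell^{\sigma^{-1}})=\mu_{\sigma(\mathtt I)}(\ell)$ fixed by \eqref{eq:Z_123-mu_123}, rewrites this on $\ell$ as $\mu_{i_3 i_4\ldots i_k i_1;i_2}(\ell)$, the first factor. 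Multiplying the two factors produces the claimed identity \eqref{eq:Z-mu-prod}.

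The delicate point throughout is the bookkeeping of the $\mu$--index orderings and of the signs. Because the $\mu$--invariants of string links are genuinely order dependent (they are \emph{not} cyclically symmetric on the nose, only modulo the indeterminacy of Section~\ref{S:milnor}), one must track the cyclic relabelling through \eqref{eq:s3} and \eqref{eq:mu-perm-indices} in exactly the orientation of the worked example: reversing the roles of $\sigma$ and $\sigma^{-1}$ would cyclically permute the resulting word and change the answer. I would verify the convention against the base cases $r=2,3$ in \eqref{eq:Z_2;1=lk}--\eqref{eq:Z_123-mu_123}, paying particular attention to the sign carried by $Z_{I^+;i_k}$ when $I^+$ degenerates to a two--element set (the case $k=2$), where the trunk--last factor reduces to a single linking number and the overall sign must be reconciled with \eqref{eq:Z_2;1=lk}.

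The two extreme cases are then read off directly. For $k=1$ the splitting is vacuous and the statement is exactly Theorem~\ref{thm:tree=mu}, giving $Z_{i_2\ldots i_r;i_1}(\ell)=\mu_{i_2\ldots i_r;i_1}(\ell)$. For $k=r$ the set $I^-=\{i_r\}$ contributes an empty product, so only the cyclic evaluation of $Z_{I^+;i_r}=Z_{I;i_r}$ survives, and the argument above yields $Z_{i_1\ldots i_{r-1};i_r}(\ell)=\mu_{i_3 i_4\ldots i_r i_1;i_2}(\ell)$, as stated.
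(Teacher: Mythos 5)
Your proof follows exactly the route the paper intends: it splits $Z_{I;i_k}$ via ({\bf s1}), evaluates the trunk--smallest factor $Z_{I^-;i_k}$ by Theorem~\ref{thm:tree=mu}, and handles the trunk--largest factor $Z_{I^+;i_k}$ by the cyclic move ({\bf s3}) together with the equivariance \eqref{eq:mu-perm-indices}, which is precisely the generalization of the worked computation \eqref{eq:Z_123-mu_123} that the paper invokes with the phrase ``can be easily generalized'' in place of a written proof. Your flagged concern about the degenerate case $k=2$ is well taken but reflects a sign tension internal to the paper itself (the example \eqref{eq:Z_123-mu_123} gives $Z_{13;2}=-\mu_{1;2}\mu_{3;2}$ while \eqref{eq:Z-mu-prod} carries no sign), not a gap in your argument.
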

\no We end this section with a computational example.
\begin{exm}
Let $\ell$ be the string link shown in Figure \ref{fig:trees}(right) along with its Gauss diagram $G_\ell$, and let us compute $Z_{1,2,3;1}(\ell)=Z_{23;1}(\ell)$ using identities in  \eqref{eq:tree-invariants-n=3}. Note that $G_\ell$ contains three subdiagrams of the first type, two of which contribute $+1$ and one of which contributes $-1$. Also note that $G_\ell$ does not contain any  subdiagrams of the other two types, as a result $Z_{23;1}(\ell) = 1$. 
\end{exm}

\section{Recursive construction of tree invariants}\label{S:recursive}

  We begin by introducing a certain useful operation, we refer to as {\em tree stacking}, apart from the fact, that tree stacking is our primary tool in proving  Main Theorem, it is essential to obtain a recursive description of  planar trees, which is presented in the second part of this section.
  
Recall, that $\mathcal{A}(n;r)$ is the set of planar tree diagrams satisfying axioms ({\bf{d1}})--({\bf{d4}}) of Section \ref{S:tree-invariants}. Tree stacking operations are indexed by the leaves and defined as
\begin{equation}\label{eq:stack_k}
\begin{split}
\prec_k:\mathcal{A}(n;r) & \times \mathcal{A}(m;s)\longrightarrow \mathcal{A}(m+n-1;t),\\
(P,Q) & \longrightarrow P\prec_k Q,
\end{split}
\end{equation}
where $k\in [n]$ indexes a fixed leaf of a tree in $\mathcal{A}(n;r)$ and 
$P\prec_k Q$ is simply obtained by ``gluing'' the trunk of $Q$ on the $k$th leaf of $P$, as shown in Figure \ref{fig:tree-stacking}. 
The index $t$ of the trunk of $P\prec_k Q$ is determined as follows:
\begin{equation}\label{eq:P-Q-trunk}
t=\begin{cases}
r; & \text{if } r< k,\\
r+m-1; & \text{if } r> k,\\
r+s-1; & \text{if } r=k.
\end{cases}
\end{equation}
\begin{figure}[ht]	
	\includegraphics[width=.4\textwidth]{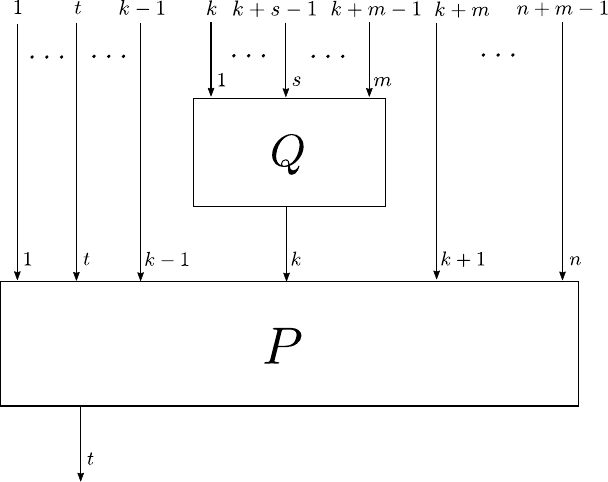}
	\caption{Stacking $Q\in \mathcal{A}(m;s)$ onto the $k^\text{th}$ leaf of $P\in \mathcal{A}(n;r)$ yields $P\prec_k Q\in \mathcal{A}(m+n-1;t)$.}\label{fig:tree-stacking}
\end{figure}
\no Operations $\prec_k$ are clearly well defined, i.e. produce planar trees on $n+m-1$ leaves with the trunk on  $t$. We also set 
\[
 P\prec_k \varnothing=P,\qquad \varnothing\prec_k P=P. 
\] 
Given $P\in \mathcal{A}(I;r)$, define
\begin{equation}\label{eq:r-l-trunk}
\begin{split}
 r(P) & =\text{number of leaves of $P$ to the {\em right} of the trunk},\\
 l(P) & =\text{number of leaves of $P$ to the {\em left} of the trunk}.
 \end{split}
\end{equation}
And the {\em total number of leaves}:
\[
  |P|=r(P)+l(P)+1.
\]
It will be useful, to keep track of the leaves indexing in $P$ and $Q$ after stacking. For $Z=P\prec_k Q$, $n=|P|$, and $m=|Q|$, we define the following multindices (as in Figure \ref{fig:tree-stacking})
\begin{equation}\label{eq:I(P;Z)-I(Q;Z)}
\begin{split}
I(P;Z) &=[1,\ldots, k-1]\,\cup\{k+s-1\}\cup\, [k+m,\ldots,m+n],\\ 
I(Q;Z) & =[k,\ldots,k+m-1],
\end{split}
\end{equation}
where $I(X;Z)$ indexes the leaves of $X$ in $Z$. Suppose a tree diagram $A\in \mathcal{A}(n;t)$ admits a decomposition (where the parenthesis are inserted in an arbitrary way)
\begin{equation}\label{eq:A=B_1-prec-B_k}
 A=B_1\prec_{b_1} B_2\prec_{b_2} \ldots \prec_{b_{k-1}} B_{k},\qquad B_i\in \mathcal{A}(m_i;t_i).
\end{equation}
Let $j\in [n]$ the index of  $j$th leaf component of $A$ ($j$--component in short), we say that the $j$th leaf {\em stems} from $B_i$'th factor in $A$, if and only if, it was added at the moment of stacking the $B_i$'th factor in  \eqref{eq:A=B_1-prec-B_k}. Denote
\begin{equation}\label{eq:i(j,B_i;A)}
\begin{split}
	i(j,A; B_i) & =\text{the index of $j$th leaf of $A$ in the $[m_j]$ index interval of $B_j$},\\
	i(k, B_i; A) & =\text{the index of $k$th leaf of $B_j$ in the index interval $[n]$ of $A$}.
\end{split}
\end{equation}
Note that  $i(j,A;B_i)$ is only  well defined for $j\in I(B_i;A)$, i.e. $j$th leaf stems from $B_i$. From \eqref{eq:sign(A)-def} we also obtain the following sign identity,  
\begin{equation}\label{eq:sign-P-Q}
 \sign(P\prec_k Q)=\sign(P)\,\sign(Q).
\end{equation}

 Building blocks of trees in $\mathcal{A}(n;r)$ are the unique elements of $\mathcal{A}(2;1)$ and $\mathcal{A}(2;2)$  shown in Figure \ref{fig:elem-trees} and called {\em elementary trees}.
\begin{figure}[ht]
\includegraphics[width=.1\textwidth]{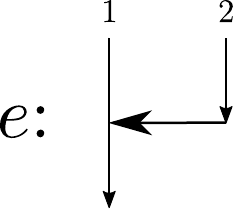}\qquad\qquad \includegraphics[width=.1\textwidth]{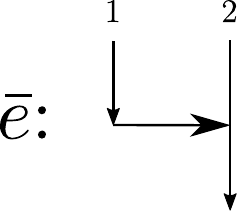}
\caption{Elementary trees $e\in \mathcal{A}(2;1)$ and $\bar{e}\in \mathcal{A}(2;2)$.}\label{fig:elem-trees}
\end{figure}
Whenever it is clear from the context the horizontal arrow of $e$ or $\bar{e}$ will also be denoted by $e$ or $\bar{e}$.  The following lemma conveys a basic fact that trees can be recursively constructed by adding leaves.
\begin{lem}\label{lem:A(n;r)-recursive}
Suppose $A\in \mathcal{A}(n;r)$, then there is $k\in [n-1]$ such that 
\begin{equation}\label{eq:A-A'-prec-e}
 A=A'\prec_k e,\quad\text{for}\quad A'\in \begin{cases}
 \mathcal{A}(n-1;r), & k\geq r;\\
 \mathcal{A}(n-1;r-1), & k<r;
 \end{cases}
\end{equation}
 or
\begin{equation}\label{eq:A-A'-prec-bar-e}
A=A'\prec_k \bar{e},\quad\text{for}\quad  A'\in \begin{cases}
\mathcal{A}(n-1;r), & k> r;\\
\mathcal{A}(n-1;r-1), & k\leq r.
\end{cases}
\end{equation}
\end{lem}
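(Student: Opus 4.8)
The plan is to prove the statement directly, with no induction: I will locate inside $A$ a single elementary subtree that can be \emph{pruned}, and then check that deleting it yields both the desired smaller diagram $A'$ and the stacking datum $(k,e)$ or $(k,\bar e)$. The guiding picture is the rooted planar tree $T(A)$ attached to $A$. Each arrowhead $h(\alpha)$ is a trivalent vertex (the strand of $h(\alpha)$ enters from above, leaves below, and the arrow $\alpha$ comes in), while each arrowtail is a suppressible degree-two vertex; hence $T(A)$ is a rooted \emph{binary} tree whose $n$ leaves are the tops of the $n$ strands and whose root is the bottom of the trunk, with the $n-1$ internal vertices matching the $n-1$ arrows. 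In this dictionary, pruning an elementary tree from $A$ corresponds exactly to deleting a \emph{cherry} (a pair of sibling leaves) from $T(A)$.

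\textbf{Producing the cherry.} First I would take a leaf of $T(A)$ of maximal depth; its parent $v$ is then an internal vertex all of whose children are leaves, and since $T(A)$ is binary, $v$ has exactly two children, which by planarity occupy two adjacent positions, say the consecutive strands $c$ and $d$. Translating back to the diagram, $v$ is the head $h(\alpha)$ of a unique arrow $\alpha$; one of the two strands, say $c$, carries the tail $t(\alpha)$ and no arrowheads whatsoever (it is a \emph{pure leaf}), while $h(\alpha)$ is the topmost arrowhead on the neighbouring strand $d$. I expect this to be the main obstacle: one must argue carefully, using planarity together with axiom (\textbf{d4}) (all heads precede the unique tail on a strand), that the deepest merge really does present itself diagrammatically as a pure leaf adjacent to the top of its neighbour, and that the trunk---which has no arrowtail and therefore can never be the pure leaf $c$---does not interfere.

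\textbf{Pruning and bookkeeping.} Granting the cherry, the remainder is routine. Deleting the strand $c$ and the arrow $\alpha$ leaves a diagram $A'$ on $n-1$ strands; because $c$ carried no arrowhead, no head is orphaned, and axioms (\textbf{d1})--(\textbf{d4}) are inherited verbatim, so $A'\in\mathcal{A}(n-1;r')$ after relabelling, where $r'$ is the image of the trunk index. If $c$ lies to the right of $d$, then $\{c,\alpha\}$ together with the top of $d$ is a copy of $e\in\mathcal{A}(2;1)$ and $A=A'\prec_k e$ with $k$ the position of $d$ in $A'$; if $c$ lies to the left of $d$, it is a copy of $\bar e\in\mathcal{A}(2;2)$ and $A=A'\prec_k\bar e$. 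Finally I would read off the trunk relation from \eqref{eq:P-Q-trunk} and \eqref{eq:I(P;Z)-I(Q;Z)}: removing a strand to the left of the trunk lowers its index, giving $r'=r-1$, while removing one to its right leaves $r'=r$. Comparing the position of $c$ with $r$ then produces precisely the stated dichotomy ($k\ge r$ versus $k<r$ for $e$, and $k>r$ versus $k\le r$ for $\bar e$); the apparently missing boundary values ($k=r-1$ for $e$ and $k=r$ for $\bar e$) simply never occur, since each would force $c$ to be the trunk. The degenerate case $n=2$, in which $A'$ is the trivial one-strand tree and $A=A'\prec_1 e$ or $A'\prec_1\bar e$, is checked directly.
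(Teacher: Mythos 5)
Your proposal is correct and is in substance the paper's own argument: the paper likewise proves the lemma by pruning a single elementary subtree, locating it as the \emph{topmost} arrow of a planar realization (rather than as the parent of a deepest leaf of $T(A)$), observing that its tail strand carries nothing but the arrowtail and, by planarity, sits adjacent to the head strand --- and the induction in which the paper wraps this is never actually invoked, so its proof is just as direct as yours. The ``main obstacle'' you flag is handled in the paper by exactly the one-line argument you anticipate: a strand lying strictly between $c$ and $d$ would have its first arrowhead below $h(\alpha)$, so its vertical edge would have to cross the arrow $\alpha$, contradicting planarity of $A$.
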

\begin{proof}
	We will induct with respect to $n\geq 2$. For $n=2$, the claim is obvious, since $\mathcal{A}(1;1)=\varnothing$, $(n-1)<2$ and the only tree in $\mathcal{A}(2;1)$ is given by $\varnothing\prec_1 e=e$ and in $\mathcal{A}(1;2)$ the only tree is $\varnothing\prec_1 \bar{e}=\bar{e}$. For the inductive step, note that for any tree diagram  $A\in \mathcal{A}(n+1;r)$, in its planar realization as shown in Figure \ref{fig:trees-planar}, horizontal arrows can be pushed up or down, so that there is at most one arrow at each vertical level of the diagram. Ordering the arrows, from the top to bottom, we let $\alpha_{top}\sim (i,j)$ be a top arrow. The top arrow has the obvious property that there are no arrowheads above $h(\alpha_{top})$ along the $i$--component, and equally are no  arrowheads above $t(\alpha_{top})$ along the $j$--component of $A$ (by ({\bf d2}) there can be no arrowtail above an arrowhead). Note that $\alpha_{top}$ generally depends on the choices of heights, i.e. the vertical ordering of horizontal arrows along components. Because of planarity, we claim that $\alpha_{top}$ is ``short'' i.e. 
	\begin{equation}\label{eq:j}
	 j=\begin{cases}
	i+1, &\quad \text{if}\ i<j;\\
	i-1, &\quad \text{if}\ i>j.
	\end{cases}
	\end{equation}
	Indeed, if $i<j$ and $j>i+1$, then there is  a $k$--component, such that $i<k<j$. The vertical edge of $k$--component, would have to intersect the edge $\alpha_{top}$, because the first arrowhead along this component is below $h(\alpha_{top})$. This contradicts planarity of $A$ and proves the first case of \eqref{eq:j}, the second case of \eqref{eq:j} can be shown analogously. 
	
	Now, thanks to the property ({\bf d4}), there is no arrowhead/tail along the $j$--component, below the tail of $\alpha_{top}$. Therefore, removing the vertical edge corresponding to that component together with $\alpha_{top}$, yields a tree $A'$ which is a subgraph of $A$. From the definition of $\prec_k$ in \eqref{eq:stack_k}  we obtain \eqref{eq:A-A'-prec-e} or \eqref{eq:A-A'-prec-bar-e}
	proving the inductive step. \qedhere
\end{proof}
\no Clearly, the expansions: $A'\prec_k e$, and $A''\prec_j \bar{e}$, for  some $A'$ and $A''$ may produce isomorphic diagrams. Lemma \ref{lem:A(n;r)-recursive} implies the following 

\begin{prop}\label{prop:recursive}
 Sets $\mathcal{A}(n;r)$ are recursively defined as follows
\[
\begin{split}
\mathcal{A}(1;1) =\varnothing,\ 
\mathcal{A}(n;r) & =\{ A\prec_k e, A\prec_j \bar{e}\ |\ A\in \mathcal{A}(n-1;r-1), k=1,\ldots, r-2; j=1,\ldots, r-1\}\\
& \cup \{ A\prec_k e, A\prec_j \bar{e}\ |\ A\in \mathcal{A}(n-1;r), k=r,\ldots, n-1; j=r+1,\ldots, n-1\}.
\end{split}
\]
\end{prop}

The above formula provides an algorithm for computing $\mathcal{A}(n;r)$ recursively, and thus obtaining polynomials $Z_{n;r}$ given in \eqref{eq:Z_I;j}.
Figures \ref{fig:A(2;1)-to-A(3;1)} through \ref{fig:Z_234;1-c}  demonstrate the algorithm, by computing $\mathcal{A}(3;1)$ and $\mathcal{A}(4;1)$ and yield the arrow polynomial $Z_{1,2,3,4;1}=Z_{234;1}$. Figure \ref{fig:A(2;1)-to-A(3;1)} shows how the set $\mathcal{A}(3;1)=\{A,B,C\}$ is obtained from expansions of the diagram $e\in \mathcal{A}(2;1)$, where $A=e\prec_2 e$, $B=e\prec_1 e$, $C=e\prec_2 \bar{e}$. Figure \ref{fig:A(2;1)-to-A(3;1)} shows the corresponding arrow polynomial $Z_{23;1}=A+B-C$, see Equation \eqref{eq:tree-invariants-n=3}.
\begin{figure}[!ht] 
	\begin{overpic}[width=.7\textwidth]{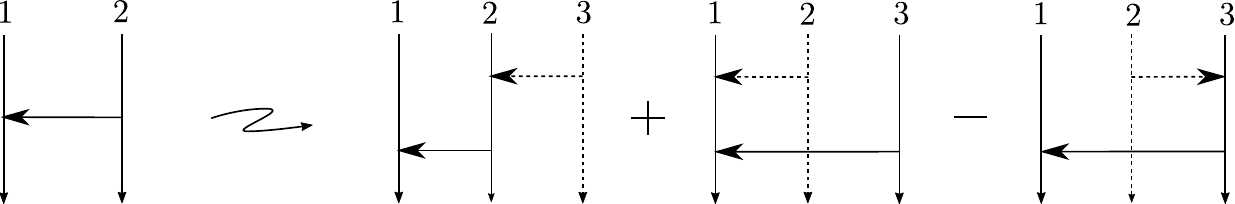}
	\end{overpic}
	\caption{Obtaining $\mathcal{A}(3;1)=\{A,B,C\}$ from $\mathcal{A}(2;1)=\{e\}$. }\label{fig:A(2;1)-to-A(3;1)}
\end{figure}	
%
\begin{figure}[!ht] 
	\begin{overpic}[width=\textwidth]{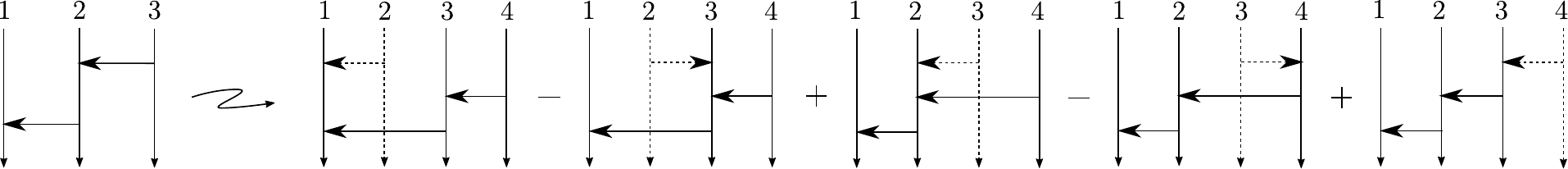}
	\end{overpic}
	\caption{Applying elementary expansions $\prec e$ and $\prec \bar{e}$ to $A\in \mathcal{A}(3;1)$.}\label{fig:Z_234;1-a}
\end{figure}
\begin{figure}[!ht] 
	\begin{overpic}[width=\textwidth]{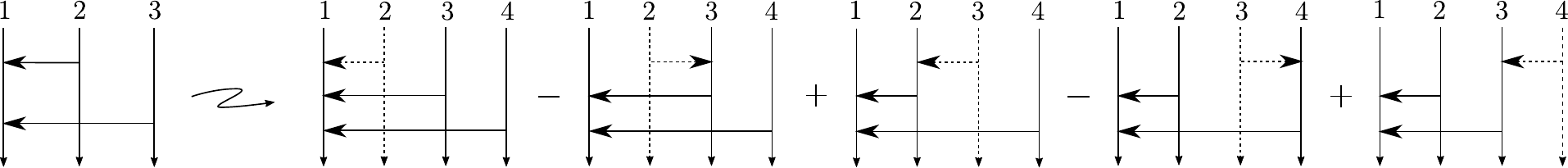}
	\end{overpic}
	\caption{Applying elementary expansions $\prec e$ and $\prec \bar{e}$ to $B\in \mathcal{A}(3;1)$.}\label{fig:Z_234;1-b}
\end{figure}	
\begin{figure}[!ht] 
	\begin{overpic}[width=\textwidth]{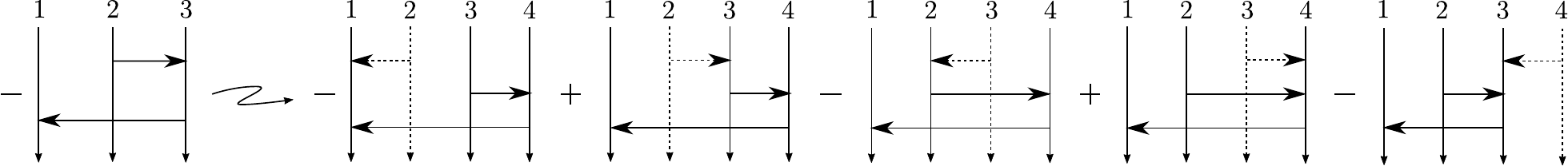}
	\end{overpic}
	\caption{Applying elementary expansions $\prec e$ and $\prec \bar{e}$ to $C\in \mathcal{A}(3;1)$.}\label{fig:Z_234;1-c}
\end{figure}	

Expanding the elements of $\mathcal{A}(3;1)$ and eliminating duplicates  yields $\mathcal{A}(4;1)$. Observe that the second to last term in Figure \ref{fig:Z_234;1-b} expansion is isomorphic to the first term of expansion in Figure \ref{fig:Z_234;1-c}, and the last  term in Figure \ref{fig:Z_234;1-b} expansion is isomorphic to the first term in Figure \ref{fig:Z_234;1-a}. Removing these duplicates yields the following arrow polynomial formula for the $Z_{2 3 4;1}$ invariant, which agrees with the formula obtained by Kravchenko and Polyak in \cite[p. 307]{Kravchenko-Polyak:2011},
\begin{equation}\label{eq:tree-invariants-n=4}
\begin{split}
Z_{2 3 4;1} (\ell) = \Bigl\langle&\vvcenteredinclude{0.3}{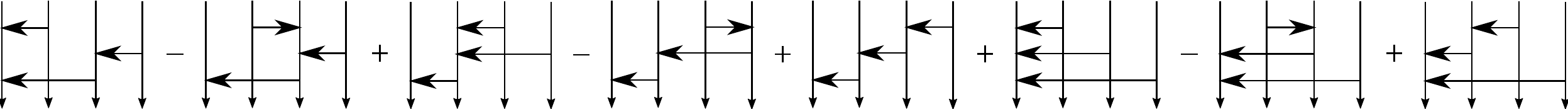} \\
&\vvcenteredinclude{0.3}{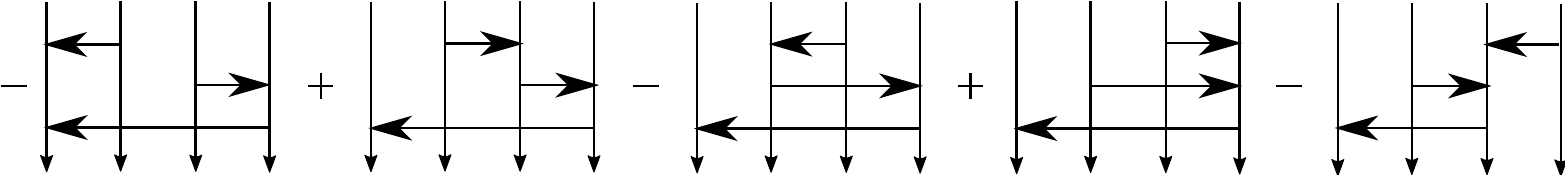}, G\Bigr\rangle .
\end{split}
\end{equation}
The next lemma tells us that, with appropriate choices of indexes, the operations $\prec_k$ obey a certain natural associativity formula (see \eqref{eq:I(P;Z)-I(Q;Z)} and \eqref{eq:i(j,B_i;A)} for the index notation). 
\begin{lem}\label{lem:shuffle}
 Let $P\in \mathcal{A}(p;v)$, $Q\in \mathcal{A}(q;w)$, $R\in \mathcal{A}(r;l)$. For $i\in [p]$ and $j\in [p+q-1]$, we have
 \begin{equation}\label{eq:shuffle}
  B:=(P\prec_i Q)\prec_j R =\begin{cases}
  B':=(P\prec_{j'} R)\prec_{i'} Q, & \text{if}\ j\in I(P;A), j\not\in I(Q;A),\\
  B'':=P\prec_{i''} (Q\prec_{j''} R), & \text{if}\ j\in I(Q;A),
  \end{cases}
 \end{equation}
 where $A=P\prec_i Q$, and the indexes $i$, $j$, $i'$, $j'$ and $i''$, $j''$ are related by 
 \[
  \begin{split}
  i(j,P;B)=i(j',P;B'), & \qquad i(i,P;B)=i(i',P;B'),\\
  i=i'', &\qquad  i(j,A;B)=i(j'',Q;B''),
  \end{split}
 \]
 respectively for the first and second identity in \eqref{eq:shuffle}.
\end{lem}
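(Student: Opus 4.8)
The plan is to verify both identities at the level of the underlying planar rooted trees, using that $X\prec_k Y$ is nothing but the grafting of the trunk (root) of $Y$ onto the $k$th leaf of $X$. Under this description the leaf set of $X\prec_k Y$ decomposes exactly as in \eqref{eq:I(P;Z)-I(Q;Z)}: the leaves $1,\dots,k-1$ of $X$, then all leaves of $Y$ in order, then the remaining leaves of $X$ shifted upward by $|Y|-1$. Two planar tree diagrams coincide precisely when their grafted trees agree and the induced linear orders on the leaves (hence on the strings) match. So it suffices, in each case of \eqref{eq:shuffle}, to exhibit one explicit planar tree to which both sides are canonically isomorphic, and then read off the asserted index relations from the two leaf-labelings. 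Conceptually this is just the statement that $\prec$ makes planar tree diagrams into a (non-symmetric) operad under partial grafting, and the shuffle formula is the operad associativity axiom in its two guises.

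First I would treat the case $j\in I(Q;A)$ (the second identity), which is the \emph{sequential} associativity. Here the $j$th leaf of $A=P\prec_i Q$ is a leaf of $Q$, namely the $j''$th one with $j''=i(j,A;Q)$ in the notation of \eqref{eq:i(j,B_i;A)}. In $B=A\prec_j R$ the tree $R$ is grafted onto this $Q$-leaf, while in $B''=P\prec_{i''}(Q\prec_{j''}R)$ one first grafts $R$ onto the same $Q$-leaf and then grafts the whole result onto the $i$th leaf of $P$. Both procedures graft $R$ at the identical site of $Q$ and seat the configuration at the $i$th leaf of $P$, so the grafted trees coincide; planarity of the stacking guarantees the leaf orders agree as well. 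It then remains to confirm $i=i''$ and $i(j,A;B)=i(j'',Q;B'')$ directly from the index-interval formula, and to check that the trunk index $t$ agrees by applying \eqref{eq:P-Q-trunk} twice.

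Next I would treat the case $j\in I(P;A)$, $j\notin I(Q;A)$ (the first identity), which is the \emph{parallel} commutativity. Now the $j$th leaf of $A$ is a leaf of $P$; call its $P$-index $j'$, obtained by undoing the shift introduced when $Q$ was inserted at leaf $i$. Since $Q$ was grafted at leaf $i$ and $R$ is being grafted at a genuinely different $P$-leaf, we have $i\neq j'$, so $B$ grafts $Q$ at leaf $i$ of $P$ and $R$ at leaf $j'$ of $P$. As these graftings occur at disjoint, independent leaves of $P$, they may be performed in either order: $B'=(P\prec_{j'}R)\prec_{i'}Q$ grafts $R$ first and $Q$ second, where $i'$ is the shifted index of leaf $i$ after inserting $R$ at $j'$. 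The resulting trees are identical, and the stated relations $i(j,P;B)=i(j',P;B')$ and $i(i,P;B)=i(i',P;B')$ record precisely this identification of the two relevant leaves of $P$ in the two constructions.

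The genuine work, and what I expect to be the main obstacle, is the index and trunk bookkeeping rather than any topological subtlety. One must compute the shifted insertion indices $i',j',i'',j''$, which differ by $\pm(|Q|-1)$ or $\pm(|R|-1)$ according to the relative positions of $i$, $j$, and $j'$, and one must verify that the trunk index $t$ produced by two nested applications of the three-case formula \eqref{eq:P-Q-trunk} agrees in every sub-case, organized by where the trunk indices $v$, $w$, $l$ of $P$, $Q$, $R$ fall relative to the grafting sites. I expect this to reduce to a finite, if tedious, verification driven entirely by the relative linear order of the insertion points.
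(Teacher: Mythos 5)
Your proposal is correct and takes essentially the same approach as the paper: the paper's entire proof is the single sentence ``The obvious graph isomorphisms are pictured on Figure \ref{fig:shuffle1} and Figure \ref{fig:shuffle2},'' i.e.\ precisely the two graph isomorphisms (sequential grafting associativity and commutativity of grafting at independent leaves) that you describe in words, with the index relations read off from the leaf labelings. Your operadic framing and explicit attention to the index bookkeeping is, if anything, more careful than the paper's proof.
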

\begin{proof} The obvious graph isomorphisms are pictured in Figure \ref{fig:shuffle1} and Figure \ref{fig:shuffle2}.\end{proof}
  \begin{figure}[ht]
  	\includegraphics[width=.55\textwidth]{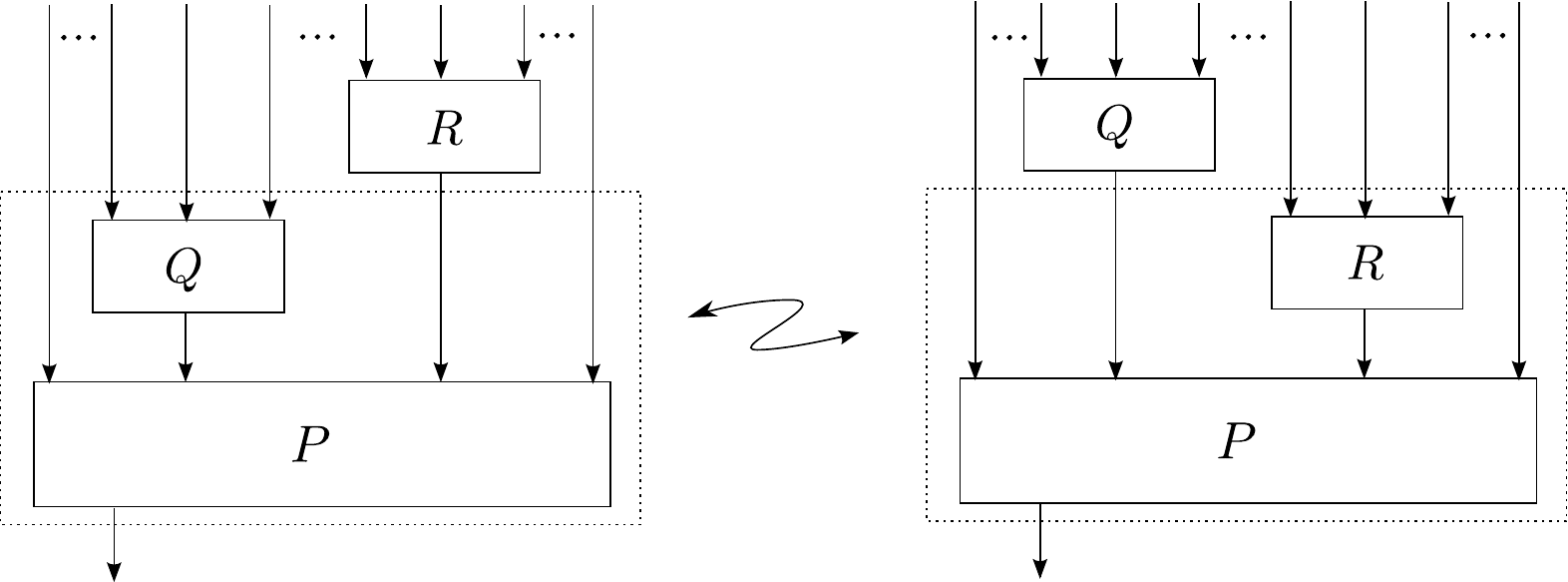}
  	\caption{Graph isomorphism $B$ and $B'$  in Equation \eqref{eq:shuffle}.}\label{fig:shuffle1}
  \end{figure}
      \begin{figure}[ht]
      	\includegraphics[width=.55\textwidth]{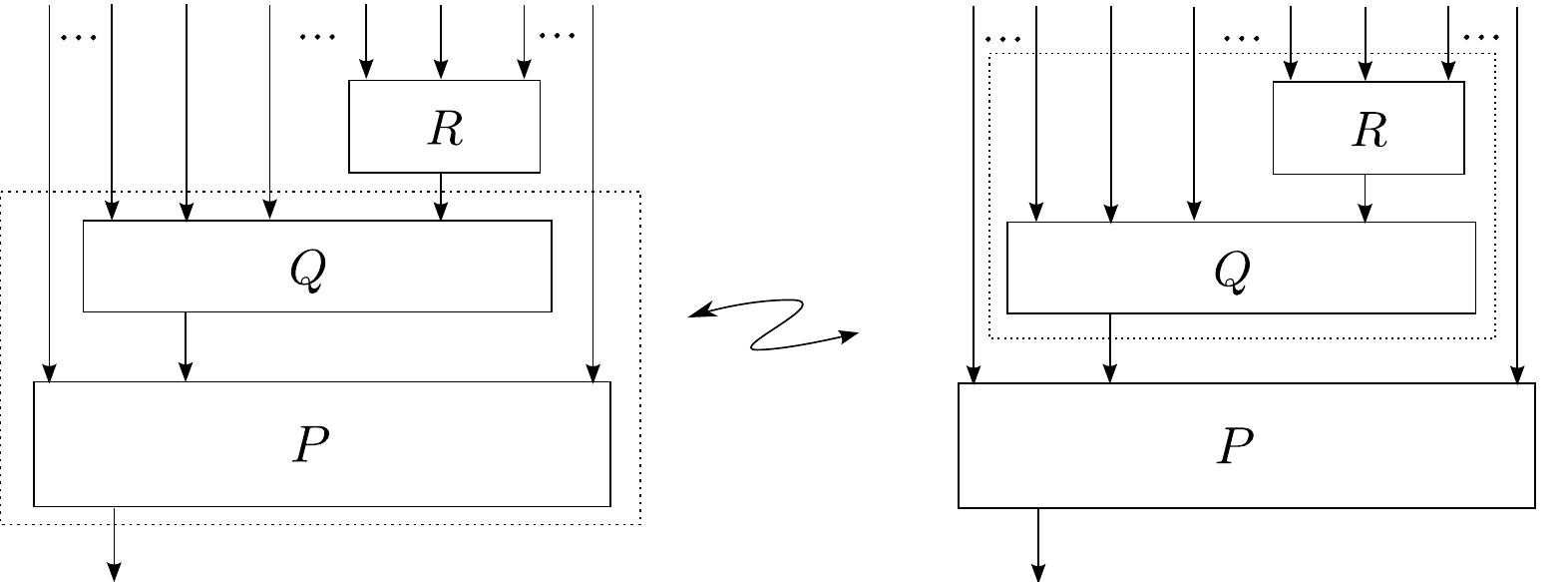}
      	\caption{Graph isomorphism  of $B$ and $B''$ in Equation \eqref{eq:shuffle}.}\label{fig:shuffle2}
      \end{figure}
    \begin{figure}[ht]
    	\includegraphics[width=.3\textwidth]{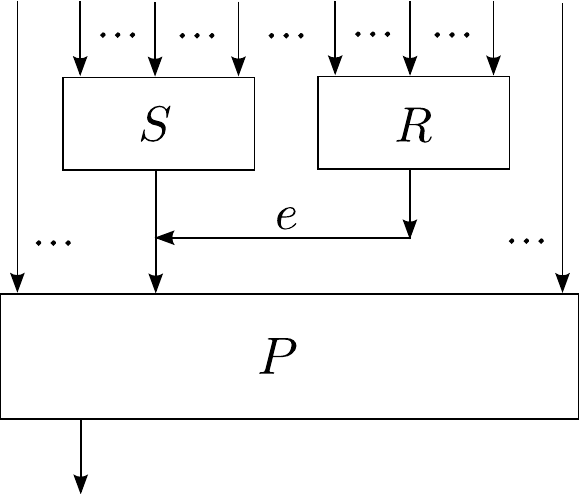}\qquad\quad\quad	\includegraphics[width=.3\textwidth]{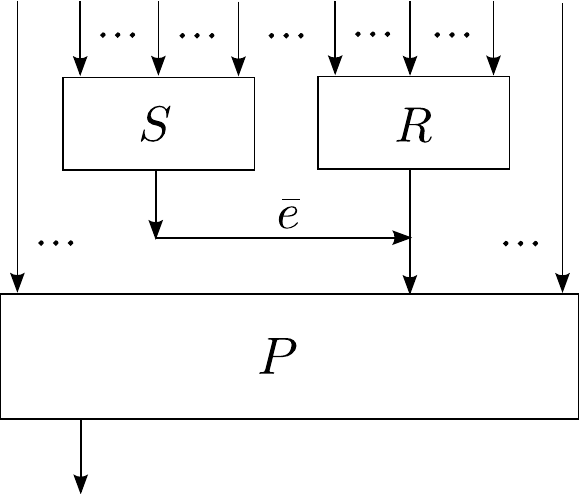}
    	\caption{Decomposition of $A\in \mathcal{A}(n;t)$ with respect to the arrow $\alpha$ corresponding to $e$ (left) or $\bar{e}$ (right).}\label{fig:arrow-decomp}
    \end{figure}
    \no The next lemma shows that any diagram in $\mathcal{A}(n;t)$ can be decomposed with respect to a given arrow in $A$ as pictured in Figure \ref{fig:arrow-decomp} which  is a natural consequence of Lemma \ref{lem:A(n;r)-recursive}.
\begin{lem}\label{lem:P,Q-decomp}
	For any given $A\in \mathcal{A}(n;t)$ and $\alpha\sim (i,j)\in A$, there exist $P\in \mathcal{A}(p;u)$, $R\in \mathcal{A}(r; v)$, and $S\in \mathcal{A}(s;w)$, $n=p+r+s-1$ so that
	\begin{align}
		\label{eq:A-decomp-e} A =P\prec_{v}  Q, & \quad\text{for}\quad Q=(e\prec_2 R)\prec_1 S,\quad\text{if}\ i<j,\\
		\label{eq:A-decomp-bar-e} & \quad\text{and}\quad Q=(\bar{e}\prec_2 R)\prec_1 S
		,\quad\text{if}\ i>j,\ 
	\end{align}
	where $v=i(i,A;P)$ and  $e$ (or $\bar{e}$ respectively) corresponds to $\alpha$ in $A$. 
\end{lem}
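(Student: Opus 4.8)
The plan is to prove the statement by induction on $n=|A|$, using the recursive description of planar trees from Lemma \ref{lem:A(n;r)-recursive} together with the reassociation identities of Lemma \ref{lem:shuffle}, all the while tracking the prescribed arrow $\alpha\sim(i,j)$. Since stacking never alters the local orientation of an arrow, the elementary tree that $\alpha$ determines is $e$ precisely when $i<j$ and $\bar e$ precisely when $i>j$; this fixes which of \eqref{eq:A-decomp-e}, \eqref{eq:A-decomp-bar-e} we must land on, and from now on I write $\epsilon\in\{e,\bar e\}$ for this elementary tree. For the base case $n=2$ the diagram $A$ equals $\epsilon$ and $\alpha$ is its only arrow; taking $P=R=S=\varnothing$ gives $Q=(\epsilon\prec_2\varnothing)\prec_1\varnothing=\epsilon$ and $A=\varnothing\prec_v Q=Q$, as required.

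For the inductive step I would apply Lemma \ref{lem:A(n;r)-recursive} to peel off the top arrow: $A=A'\prec_k\beta$ with $\beta\in\{e,\bar e\}$ and $A'\in\mathcal A(n-1;\cdot)$. If $\beta=\alpha$, then $\alpha$ is the (short) top arrow, so nothing sits below its tail and $R=\varnothing$; one checks directly that $P=A'$, $R=S=\varnothing$, $Q=\epsilon$ and $v=k$ realize the decomposition, and that $v=i(i,A;P)$ since the head of $\alpha$ is the $k$th leaf of $A'$. Otherwise $\alpha$ already lies in $A'$, and the inductive hypothesis supplies $A'=P'\prec_{v'}Q'$ with $Q'=(\epsilon\prec_2 R')\prec_1 S'$ and $v'=i(i,A';P')$. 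Now $A=(P'\prec_{v'}Q')\prec_k\beta$, and the outcome is governed by where the leaf index $k$ lies. When $k\in I(P';A')$ the first identity of \eqref{eq:shuffle} reassociates $A$ as $(P'\prec_{j'}\beta)\prec_{i'}Q'$, so I absorb $\beta$ into the root block and set $P=P'\prec_{j'}\beta$, keeping $Q=Q'$. When $k\in I(Q';A')$ the second identity of \eqref{eq:shuffle} gives $A=P'\prec_{i''}(Q'\prec_{j''}\beta)$; since the free leaves of $Q'$ are exactly those of $R'$ together with those of $S'$ (the two leaves of $\epsilon$ being consumed by the $\prec_2$ and $\prec_1$ gluings, and the trunk of $Q'$ being internal to $A'$), a further application of \eqref{eq:shuffle} inside $Q'\prec_{j''}\beta$ absorbs $\beta$ into whichever of $R'$ or $S'$ contains the target leaf, updating $R=R'\prec\beta$ or $S=S'\prec\beta$ and leaving the rest unchanged. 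In every case the result has the required shape $P\prec_v Q$ with $Q=(\epsilon\prec_2 R)\prec_1 S$.

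The part demanding the most care is the index bookkeeping: one must verify, using the trunk rule \eqref{eq:P-Q-trunk} and the conventions \eqref{eq:I(P;Z)-I(Q;Z)}, \eqref{eq:i(j,B_i;A)}, that the gluing index stays equal to $i(i,A;P)$ after each reassociation, that the empty-factor conventions $P\prec_k\varnothing=P=\varnothing\prec_k P$ cover the degenerate cases $R'=\varnothing$ or $S'=\varnothing$ (where a bare leaf of $\epsilon$ plays the role of the target), and that the dichotomy $k\in I(P';A')$ versus $k\in I(Q';A')$ is exhaustive. None of this is deep---the underlying content is precisely the graph isomorphism drawn in Figure \ref{fig:arrow-decomp}, and the shuffle moves of Lemma \ref{lem:shuffle} are exactly the algebraic shadow of sliding $\beta$ along the tree into its correct block---but the notation must be managed carefully so that the three blocks $P$, $R$, $S$ and the exponent $v$ are unambiguously specified at the end of the induction.
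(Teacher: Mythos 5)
Your proposal is correct and follows essentially the same route as the paper's own proof: induction on $n$, peeling off the top arrow via Lemma \ref{lem:A(n;r)-recursive}, treating the case where the peeled arrow is $\alpha$ directly (with $R=S=\varnothing$), and otherwise invoking the inductive hypothesis on $A'$ and the shuffle identities of Lemma \ref{lem:shuffle} to absorb the new elementary tree into $P'$, $R'$, or $S'$ according to where the stacking index lands. Your three-way absorption matches the paper's subcases $(a)$, $(b)$, $(c)$ exactly, and your closing remarks on index bookkeeping and the degenerate empty-factor cases reflect the same level of detail the paper itself supplies.
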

\begin{proof}
		\no The proofs of \eqref{eq:A-decomp-e} and \eqref{eq:A-decomp-bar-e} are analogous and follow by induction with respect to $n$, thus we provide details only in the case \eqref{eq:A-decomp-e}. 
		The base case $n=2$ follows immediately by choosing $P=R=S=\varnothing$. For the inductive step, suppose \eqref{eq:A-decomp-e} is valid for $n$, and pick any $A\in \mathcal{A}(n+1;t)$. By Lemma \ref{lem:A(n;r)-recursive}, one of the following holds for some $k\in [n-1]$:
		\begin{equation}\label{eq:1-2-cases}
		\begin{split}
		1^\circ.\ & A=A'\prec_k e,\\
		2^\circ.\ & A=A'\prec_k \bar{e},\quad \text{for}\quad k>1.
		\end{split}
		\end{equation}
		In the case $1^\circ$; if $\alpha=e$, $\alpha\sim (i,j)$ then $i=k$, $j=k+1$ and  \eqref{eq:A-decomp-e}is obtained by setting $P=A'$, $Q=e$, $R=\varnothing$, $S=\varnothing$.
		If $\alpha$ is different from $e$ in \eqref{eq:1-2-cases}, there must be $\alpha'\in A'$, which becomes $\alpha$ in $A$, after stacking either $e$ or $\bar{e}$, according to \eqref{eq:1-2-cases}. 
		Let $(i',j')$ be coordinates of $\alpha'$, then $i'$ equals either $i-1$ or $i$.
		By the inductive hypothesis applied to $A'$ with $\alpha'$ we obtain
		\begin{equation}\label{eq:A'}
		A' =P'\prec_{v'} Q',  \quad Q'=(e\prec_2 R')\prec_1 S',
		\end{equation}
		for certain $P'$, $Q'$, $R'$, $S'$ and $v'=i(i',A';P')$ where $e$  in \eqref{eq:A'} corresponds to $\alpha'$. Using Lemma \ref{lem:shuffle} and \eqref{eq:A'}, we have the following obvious subcases; for $x=e$ or $x=\bar{e}$ as in \eqref{eq:1-2-cases}:
		\begin{itemize}
		\item[$(a)$] for $k\in I(P';A')$ and $k\not\in I(Q';A')$ we obtain \eqref{eq:A-decomp-e} by defining 
		\[
		 P=P'\prec_{w} x,\ Q=Q',\quad w=i(k,A';P').
		\]
		\item[$(b)$] for $k\in I(S';A')$  we obtain \eqref{eq:A-decomp-e} by defining $P=P'$, 
		\[
		 R=R',\  S=S'\prec_w x,\quad w=i(k,A';S').
		 \] 
		\item[$(c)$] for $k\in I(R';A')$  we obtain \eqref{eq:A-decomp-e} by defining 
		\[
		 P=P',\ S=S',\ R=R'\prec_w x,\quad w=i(k,A';R'). \qedhere
		 \]  
\end{itemize}
\end{proof}

\begin{rem}[Operad structure of $\mathcal{A}$ and $Dias$]\label{rem:operad}
	Lemmas of this section can be formulated in the language of operads \cite{Loday-Vallette:2012, May:1972}. In particular, for the sequence of sets\footnote{$\mathcal{A}(n)=\bigcup_j \mathcal{A}(n;j)$.} $\mathcal{A}=(\mathcal{A}(n))_{n\in \mathbb{N}}$, the stacking operations of \eqref{eq:stack_k}:
	\[
	\prec_k:\mathcal{A}(n) \times \mathcal{A}(m)\longrightarrow \mathcal{A}(m+n-1),\qquad (P,Q) \longrightarrow P\prec_k Q,
	\] 
	give {\em partial compositions} defined in \cite[p. 115]{Loday-Vallette:2012} which, by Lemma \ref{lem:shuffle}, satisfiy relations (I) and (II) of \cite[p. 116]{Loday-Vallette:2012}. In turn, Proposition 5.3.8 of \cite[p. 117]{Loday-Vallette:2012} implies that $\mathcal{A}$ is an operad.  Lemma \ref{lem:A(n;r)-recursive} shows that the operad $\mathcal{A}$ is binary, i.e. generated by the two elements $e$ and $\bar{e}$.
	
	 Taking one step further, it appears that the partial compositions $\prec_k$ descend via the quotient map $\mathcal{A}(n)\longrightarrow Dias(n)$ to $Dias=(Dias(n))_{n\in\mathbb{N}}$, i.e.
	\[
	 	\prec_k:Dias(n) \times Dias(m)\longrightarrow Dias(m+n-1),
	\]
	which in turn would imply that $Dias$ is binary.	We leave the details of proofs for these statements to the reader.
\end{rem}

\section{Proof of Main Theorem}\label{S:proof}
Recall the statement of  Main Theorem; Given a based $n$--component link  $L$ in $\R^3$ and  its Gauss diagram $G_L$, the following quantity defines a homotopy invariant of $L$:
\begin{equation}\label{eq2:bar-Z_I;j}
\overline{Z}_{I;j}(L)=\langle Z_{I;j}, G_L\rangle \mod \Delta_Z(I;j),
\end{equation}  
for $I=\{i_1,i_2,\ldots, i_r\}$, $1\leq i_1 <i_2 <\ldots <i_r \leq n$ and $j=i_k$, $1\leq k\leq r$, where
\begin{equation}\label{eq2:Delta_Z(I;j)}
\Delta_Z(I;j)=\gcd\{\langle Z_{J;k}, G_L\rangle\ |\ J\subsetneq I,\ k\in J\}.
\end{equation}
 Apart from the presence of the indeterminacy $\Delta_Z(I;j)$ in \eqref{eq2:bar-Z_I;j} the main point of the above formula is that the tree polynomial $Z_{I;j}$ is evaluated on a Gauss diagram of a link $G_L$ rather than a string link $G_\ell$ as in Theorem \ref{thm:Z_I;j-invariants}.

The proof of Main Theorem is divided into two parts; In the first part we analyze the difference $\langle Z_{I;j},G_L\rangle-\langle Z_{I;j},G'_L\rangle$, where $G'_L$ is a Gauss diagram of $L$ obtained after  moving the basepoint of a component past an over/undercrossing. A similar basepoint change argument can be found in the work of  {\"O}stlund, \cite{Ostlund:2004}, for $n=3$, but rather than using {\"O}stlund's {\em diagram fragments} we perform direct computations with tree diagrams and their suitable tree decompositions as stated in Lemma \ref{lem:P,Q-decomp}.
In the second part of the proof, we show invariance under the Reidemeister moves, where the argument is analogous as the one by Kravchenko and Polyak in  Theorem \ref{thm:Z_j} of \cite{Kravchenko-Polyak:2011} and is included mainly for completeness of the exposition.
\subsection{Basepoint change}
 Let us review our basic notation first; let $G=G_L$ be a Gauss diagram of the $n$--component based link $L$. For convenience we will always visualize $G$ as a string rather than a circular Gauss diagram; see Figure \ref{fig:gauss-d}.  
 A {\em{embedding}} of a tree diagram $A\in \mathcal{A}(m;k)$ in $G$ is a graph embedding $\phi:A\longrightarrow G$ of $A$ into $G$ mapping components of $A$ to the strings  of $G$, preserving the endpoints and arrow orientations. Recall that arrows of $A$ are denoted by the greek letters $\alpha$, $\beta$,\ldots , and arrows of $G$ are denoted by lowercase letters: $g$, $h$,\ldots . Further, the pairing $\langle A, G\rangle$ is given by the sum 
 \begin{equation}\label{eq2:<A,G>}
 	\langle A, G\rangle = \sum_{\phi: A \longrightarrow G} \sign(\phi),\qquad 	\text{where}\qquad \sign(\phi)=\prod_{\alpha \in A} \sign(\phi(\alpha)),
 \end{equation}
 \no taken over all embeddings of $A$ in $G$. The following notation for a partial sum  will be used frequently
 \begin{equation}\label{eq:<A,G>_cond}
 \langle A, G\rangle_{\textbf{cond.}} = \sum_{\substack{\phi: A \longrightarrow G;\\ \phi\ \text{satisfies}\ \textbf{cond.}} } \sign(\phi),
 \end{equation}
 where the sum is only over those embeddings which satisfy a given condition: {\bf cond.}. For instance  given an arrow $g\in G$ and an arrow $\alpha\in A$, we write 
 \begin{equation}\label{eq:<A,G>_arr}
 	\langle A, G\rangle_{\alpha\mapsto g} = \sum_{\substack{\phi: A \rightarrow G;\\ \phi(\alpha)=g}} \sign(\phi),\qquad\quad \langle A, G\rangle_{\alpha\not\mapsto g} = \sum_{\substack{\phi: A \rightarrow G;\\ \phi(\alpha)\neq g}} \sign(\phi).
 \end{equation}
Note that, for any $\alpha$ and $g$,
 \begin{equation}\label{eq:alpha->g}
 \langle A, G\rangle=\langle A, G\rangle_{\alpha\mapsto g}+\langle A, G\rangle_{\alpha\not\mapsto g}.
 \end{equation}
 If a Gauss diagram $G$ has $n$ components, $A\in \mathcal{A}(m;q)$, $m\leq n$, and  $I=\{i_1,\ldots, i_m\}$, $1\leq i_1<i_2<\ldots<i_m\leq n$, we denote by $G(I)$ the subdiagram of $G$ obtained by removing the components (together
  with the adjacent arrows) of $G$ which are not in $I$, and define  
 \begin{equation}\label{eq:<A,G(I)>}
 	\langle A, G(I)\rangle = \sum_{\phi: A \rightarrow G;\ \phi(A)\subset G(I)} \sign(\phi).
 \end{equation}
From the definitions presented in Section \ref{S:intro}, $\langle A, G(I)\rangle$, equals $\langle A',G\rangle$ where $A'\in  \mathcal{A}(I;i_q)$ is obtained from $A$ by simply indexing the leaves and the trunk by $(I;i_q)$.

Each step in the following analysis is pictured in Figures \ref{fig:basepoint-pass-i} through \ref{fig:basepoint-pass-iv}, where  $G$ (left) and $G'$ (right) differ by a basepoint move, i.e. the arrow $g\sim (i,j)$ in $G$ becomes $g'$ in $G'$ and the dashed part of the diagrams is common for both $G$ and $G'$. We consider four cases $(\iota)$--$(\iota v)$ as the basepoint of the $i$th or $j$th string passes through  the arrowhead/tail of  $g$. The basepoint passes are denoted by $(a)$ and $(b)$ in each of the Figures \ref{fig:basepoint-pass-i}--\ref{fig:basepoint-pass-iv}. The following result characterizes the difference $\langle Z_{n;t},G\rangle-\langle Z_{n;t}, G'\rangle$, for $1\leq t\leq n$.
\begin{lem}\label{lem:basepoint-change}
Let $G$ be a  Gauss diagram of a based closed link $L$, and consider a Gauss diagram $G'$ obtained from $G$ via a moving a basepoint past a crossing in each case: $(\iota)$--$(\iota v)$. Then, we have the following identity
	\begin{equation}\label{eq:z(G)-z(G')}
	 \langle Z_{n;t},G\rangle-\langle Z_{n;t}, G'\rangle=\sum^n_{j=1}\sum_{J\in \mathcal{I}_j} a_{J;j} \langle Z_{J;j}, G\rangle,
	 \end{equation}
for some integer coefficients $a_{J;j}$ and 
where the index sets $\mathcal{I}_j$ are given as follows 
\begin{equation}\label{eq2:I_j}
\begin{split}
\mathcal{I}_t & =\bigl\{[1,i]\cup [k,n]\ |\ i=1,\ldots, n-1; i+1<k\leq n+1; t\in [1,i]\text{ or } t\in [k,n]\bigr\}\\
&\qquad \cup \bigl\{[1,k]\cup [i,n]\ |\ 0\leq k<i-1;i\leq n; t\in [1,k]\text{ or } t\in [i,n]\bigr\},\\
\mathcal{I}_j & =\bigl\{[k,\ldots,m]\ |\ 1\leq k\leq j;j\leq m\leq n; k<m\bigr\},\quad j\neq t.
\end{split}	 
\end{equation}
where $[a,b]=\varnothing$ for $a>b$, and $[a,a]=\{a\}$.
\end{lem}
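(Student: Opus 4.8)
The plan is to compute the difference $\langle Z_{n;t},G\rangle-\langle Z_{n;t},G'\rangle$ directly, by isolating precisely those representations $\phi\colon A\longrightarrow G$ of the summands $A\in\mathcal{A}(n;t)$ that are affected by the basepoint move. Since $G$ and $G'$ differ only in that one arrow $g\sim(i,j)$ is replaced by $g'$ (the basepoint of the $i$th or $j$th string having been dragged past one endpoint of $g$), a representation $\phi$ contributes identically to both pairings unless it uses the arrow $g$, i.e.\ unless some $\alpha\in A$ has $\phi(\alpha)=g$. Using the splitting \eqref{eq:alpha->g}, I would write
\begin{equation}\label{eq:plan-split}
\langle Z_{n;t},G\rangle-\langle Z_{n;t},G'\rangle=\sum_{A\in\mathcal{A}(n;t)}\sign(A)\bigl(\langle A,G\rangle_{\alpha\mapsto g}-\langle A,G'\rangle_{\alpha\mapsto g'}\bigr),
\end{equation}
summing over the unique arrow $\alpha\sim(i,j)$ in each $A$ that could possibly land on $g$, since axioms (\textbf{d1})--(\textbf{d4}) force the coordinates of $\alpha$ to match $(i,j)$. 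The $\alpha\not\mapsto g$ parts coincide for $G$ and $G'$ and cancel.

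\textbf{Tree decomposition.} The key structural tool is Lemma \ref{lem:P,Q-decomp}: for each $A$ and its distinguished arrow $\alpha$, I would decompose $A=P\prec_{v}Q$ with $Q=(e\prec_2 R)\prec_1 S$ (or the $\bar e$ version when $i>j$), so that $\alpha$ is identified with the elementary arrow $e$ (resp.\ $\bar e$). This exposes $\alpha$ as the top arrow of a well-understood local configuration: its arrowhead sits on the $i$th string just above the subtree governed by the block carrying the leaves indexed through $R$ and $S$, and its arrowtail sits on the $j$th string. The point of this decomposition is that when the basepoint passes the endpoint of $g$, the \emph{only} change in the representation count is whether certain arrowheads/tails lie above or below the relocated basepoint; the factors $P$, $R$, $S$ reassemble, after summing over all $A$ with $\sign(A)=\sign(P)\sign(R)\sign(S)$ by \eqref{eq:sign-P-Q}, into lower tree polynomials evaluated on subdiagrams $G(J)$. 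Converting $\langle\,\cdot\,,G(J)\rangle$ into $\langle Z_{J;j},G\rangle$ via the identity noted after \eqref{eq:<A,G(I)>} then produces the right-hand side of \eqref{eq:z(G)-z(G')}.

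\textbf{Casework and bookkeeping.} I would then run the four geometric cases $(\iota)$--$(\iota v)$ (basepoint of the $i$th or $j$th string crossing the head or tail of $g$), each with its two sub-passes $(a)$ and $(b)$, tracking for each how the set of admissible representations using $g$ changes. In cases where the moved basepoint belongs to the trunk component $t$, the surviving trees split across the relocated basepoint into two consecutive index-blocks, yielding the ``two-interval'' index sets $[1,i]\cup[k,n]$ and $[1,k]\cup[i,n]$ appearing in $\mathcal{I}_t$; in cases where the moved basepoint is on a non-trunk leaf $j\neq t$, the constraint (\textbf{d4}) forces the relevant leaves to occupy a single contiguous interval $[k,m]\ni j$, giving the simpler $\mathcal{I}_j$. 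The integer coefficients $a_{J;j}$ absorb the $\pm1$ signs $\sign(A)$ together with any multiplicities from duplicate diagrams.

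\textbf{Main obstacle.} The hard part will be the index bookkeeping: verifying that, after the decomposition and resummation, the leaf-index intervals that actually arise are \emph{exactly} those listed in \eqref{eq2:I_j}, with the correct interval endpoints in each of the eight sub-cases, and that the recombined sums are genuinely the tree polynomials $Z_{J;j}$ rather than some unsymmetrized partial sums. The shuffle relations of Lemma \ref{lem:shuffle} should guarantee that the reassembly is well defined independent of parenthesization, but matching the abstract index ranges in $\mathcal{I}_t$ and $\mathcal{I}_j$ to the concrete geometry of each basepoint pass — especially distinguishing when $t$ lands to the left versus the right of the relocated crossing — is where the delicate and error-prone work lies.
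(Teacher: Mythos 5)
Your plan follows the paper's own proof in its skeleton: the same reduction via \eqref{eq:alpha->g} of each difference $\langle A,G\rangle-\langle A,G'\rangle$ to $\langle A,G\rangle_{\alpha\mapsto g}-\langle A,G'\rangle_{\alpha\mapsto g'}$, the same use of Lemma \ref{lem:P,Q-decomp} to split each tree along its unique arrow with coordinates $(i,j)$, and the same idea of resumming one factor of the decomposition into a lower tree polynomial on a subdiagram. However, your concrete prediction of how the casework resolves into \eqref{eq2:I_j} is wrong, and that prediction is the actual content of the lemma. The operative dichotomy is \emph{head versus tail} of $g$, not ``moved basepoint on the trunk component versus on a non-trunk leaf''. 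When the basepoint on the string $i$ carrying the \emph{head} of $g$ moves (the paper's Cases $(\iota)$, $(\iota\iota)$), the free factor --- the one whose embeddings are unconstrained and whose sum rebuilds a full tree polynomial --- is the outer tree $P$ in $A=P\prec_i Q$ (or, when $i=t$, the factor stacked onto the trunk). Its root is the original trunk $t$, which in general does \emph{not} lie on the moved string $i$, and its leaf set is the complement of the contiguous block occupied by $Q$, hence a union of two intervals: that is the source of the two-interval sets in $\mathcal{I}_t$. When the basepoint on the string $j$ carrying the \emph{tail} of $g$ moves (Cases $(\iota\iota\iota)$, $(\iota v)$), the free factor is the \emph{inner} stacked factor ($R$, resp.\ $S$), glued by its root onto the leaf $j$; this is why those contributions are $Z_{I;j}$ with the trunk relocated to $j\neq t$, and why $I$ is a single contiguous interval containing $j$ --- stacked factors occupy contiguous index ranges by \eqref{eq:I(P;Z)-I(Q;Z)}, not because ({\bf d4}) ``forces contiguity''. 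Executed as written, your dichotomy would not reproduce \eqref{eq2:I_j}.

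A second genuine gap: the coefficients $a_{J;j}$ are not ``$\pm 1$ signs together with multiplicities from duplicate diagrams''. They are the constrained partial pairings of the \emph{non-free} factor, e.g.\ $b_I=\sum_{Q}\sign(Q)\,\langle Q,G(J)\rangle_{\alpha\mapsto g}$, hence integers that depend on $G$ itself. Producing them requires the factorization $\langle A,G\rangle_{\alpha\mapsto g}=\langle P,G(I)\rangle\,\langle Q,G(J)\rangle_{\alpha\mapsto g}$, i.e.\ a bijection between embeddings of $A$ sending $\alpha$ to $g$ and pairs of independent embeddings of the two factors; this bijection in turn needs the geometric observation that every image of $P$ landing on the moved string must lie below the head of $g$. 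It also requires the case-by-case check that exactly one of $\langle A,G\rangle_{\alpha\mapsto g}$, $\langle A,G'\rangle_{\alpha\mapsto g'}$ survives in each subcase (the other vanishing by ({\bf d4})), while in the exceptional subcases where both survive (the paper's $(\iota.a)$, $(\iota\iota.a)$, $(\iota\iota\iota.a)$, $(\iota v.a)$) they cancel identically. These steps, which your proposal defers wholesale into ``bookkeeping'', are precisely where the lemma lives; without them the difference does not reduce to a sum of factorizable terms at all.
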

\begin{proof}
\no  For brevity, let $z(\,\cdot\,)$ denote $\langle Z_{n;t},\,\cdot\,\rangle$. Our basic ingredient in the computation of $z(G)-z(G')$ in \eqref{eq:z(G)-z(G')} is a computation of the difference
\begin{equation}\label{eq:AG-AG'}
\langle A, G\rangle-\langle A, G'\rangle\quad \text{for any}\quad A\in \mathcal{A}(n;t).
\end{equation}
 Since the diagrams $G$ and $G'$ differ only by the location of the arrow $g$ ($g\sim (i,j)$) there are two basic cases; first, there  exists an arrow $\alpha\in A$, $\alpha\sim (i,j)$, potentially matching $g$ under an embedding of $A$ in $G$. In  the second case there is no arrow in $A$ with coordinates $(i,j)$. 
In the first case, we may express \eqref{eq:AG-AG'}, using \eqref{eq:alpha->g}, as
\begin{equation}\label{eq:AG-diff}
\langle A, G\rangle - \langle A, G'\rangle  = \bigl(\langle A, G\rangle_{\alpha\mapsto g}-\langle A, G'\rangle_{\alpha\mapsto g'}\bigr)+\bigl(\langle A, G\rangle_{\alpha\not\mapsto g}-\langle A, G'\rangle_{\alpha\not\mapsto g'}\bigr).
\end{equation}
 Note that  the second term of \eqref{eq:AG-diff} vanishes, i.e.
\begin{equation}\label{eq:A,G=A,G'}
	\langle A, G\rangle_{\alpha\not\mapsto g}-\langle A, G'\rangle_{\alpha\not\mapsto g'}=0.
\end{equation}
Indeed, for any embedding $\phi:A\longmapsto G$, such that $\phi(\alpha)\neq g$ (i.e. $\phi(A)\subset G-\{g\}$). Since $G-\{g\}$ and $G'-\{g'\}$ are identical we can define $\phi':A\longmapsto G'$ as a composition of $\phi$ and the inclusion $G'-\{g'\}\subset G'$. Because $\sign(\phi)=\sign(\phi')$  we conclude $\langle A, G\rangle_{\alpha\not\mapsto g}=\langle A, G'\rangle_{\alpha\not\mapsto g'}$ which proves \eqref{eq:A,G=A,G'}. 
If a diagram $A$ has no arrow $\alpha$ matching $g$ in $G$, the first term in the sum \eqref{eq:AG-diff} vanishes, and $\langle A, G\rangle - \langle A, G'\rangle = \langle A, G\rangle_{\alpha\not\mapsto g}-\langle A, G'\rangle_{\alpha\not\mapsto g'}$. Then the same argument as for \eqref{eq:A,G=A,G'} implies $\langle A, G\rangle - \langle A, G'\rangle=0$.  Therefore, we obtain  
\begin{equation}\label{eq:AG-diff-alpha}
\langle A, G\rangle - \langle A, G'\rangle  = \langle A, G\rangle_{\alpha\mapsto g}-\langle A, G'\rangle_{\alpha\mapsto g'}
\end{equation}

\no Next, we analyze in detail Cases $(\iota)$--$(\iota v)$ illustrated in Figures  \ref{fig:basepoint-pass-i}--\ref{fig:basepoint-pass-iv}.
\smallskip
	
\no {\bf Case $(\iota)$:} Suppose $g\in G$, $g\sim (i,j)$, $i<j$ has its arrowhead closest to the basepoint along the $i$th string as pictured in Figure \ref{fig:basepoint-pass-i}, and $G'$ is obtained from $G$ by applying move $(a)$ (or equivalently $G$ is obtained from $G'$ via the move $(b)$).  
\begin{figure}[ht]
	\includegraphics[width=.8\textwidth]{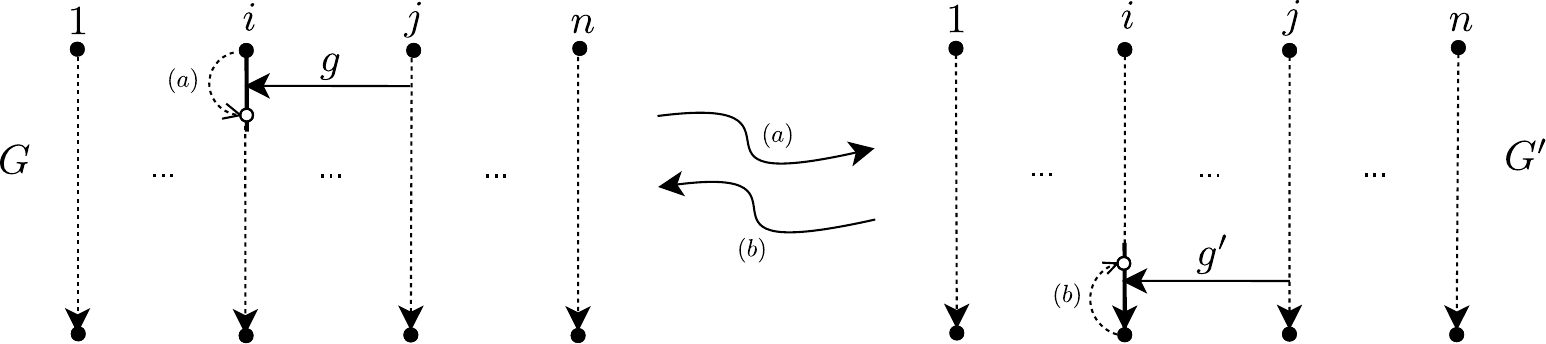}
	\caption{Case $(\iota)$: Basepoint moving pass the arrowhead of $g\sim (i,j)$, $i<j$.}\label{fig:basepoint-pass-i}
\end{figure}
\begin{figure}[ht]
	\includegraphics[width=.35\textwidth]{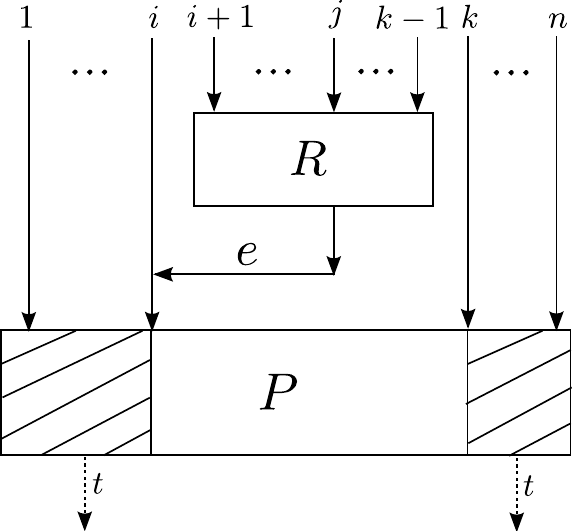}\qquad\qquad \includegraphics[width=.35\textwidth]{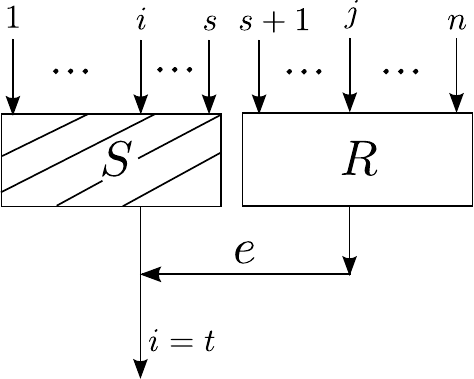}
	\caption{Diagrams in  $(\iota.b)$ and $(i.c)$, where $e$ corresponds to $\alpha$, the dashed vertical arrows, in the left diagram, indicate a possible location for the trunk of $A$ indexed by $t$.}\label{fig:i_b-i_c}
\end{figure}
The following cases define certain disjoint subsets in $\mathcal{A}(n;t)$ of tree diagrams which may yield a nonzero right hand side of \eqref{eq:AG-diff-alpha}. Roughly speaking, Case $(\iota.b)$ concerns  the diagrams which have an arrow  $\alpha\sim (i,j)$ with $h(\alpha)$ at the top of component $i$, such diagrams possibly yield $\langle A, G\rangle_{\alpha\mapsto g}\neq 0$ (Figure \ref{fig:i_b-i_c}(left)). Case $(\iota.c)$ concerns diagrams having $\alpha\sim (i,j)$ with $h(\alpha)$ at the bottom of the $i$--component, such diagrams possibly contribute to $\langle A, G'\rangle_{\alpha\mapsto g'}$ (Figure \ref{fig:i_b-i_c}(right)). Case $(\iota.a)$ is a special subcase of $(\iota.c)$ and concerns diagrams with a single $\alpha\sim (i,j)$ on the $i$--component, this can only happen for $t=i=1$ (by ({\bf d1})--({\bf d4})).

\no  {\bf Case $(\iota.a)$}, (diagrams possibly contributing to both $\langle A, G\rangle_{\alpha\mapsto g}$ and $\langle A, G'\rangle_{\alpha\mapsto g'}$ in \eqref{eq:AG-diff-alpha}): Define
\begin{equation}\label{eq:A_(i.a)}
\begin{split}
 \mathcal{A}_{(\iota.a)}(j) & =\{A\in \mathcal{A}(n;1)\ |\ \text{exists $\alpha\in A$, such that}\ \alpha\sim (1,j),\ \text{and $\alpha$ is the only arrow}\\
 &\qquad\qquad\qquad\qquad \text{with its head on the first component of $A$}\}
 \end{split}
\end{equation}
(Figure \ref{fig:i_b-i_c}(left) with $S=\varnothing$ shows the general form of such diagrams). Then given an embedding $\phi:A\longmapsto G$, $\phi(\alpha)=g$, we may define $\phi':A\longmapsto G'$ to differ from $\phi$ only by assigning $g'$ to $\alpha$, i.e. $\phi'(\alpha)=g'$.  Because $\sign(\phi)=\sign(\phi')$ we obtain 
$\langle A,G\rangle_{\alpha\mapsto g}=\langle A,G'\rangle_{\alpha\mapsto g'}$, yielding zero on the left hand side of \eqref{eq:AG-diff-alpha}, and as a consequence;
\begin{equation}\label{eq:z(G)-z(G')-i_a}
 \sum_{A\in \mathcal{A}_{(\iota.a)}(j)} \sign(A)\bigl( \langle A, G\rangle-\langle A, G'\rangle \bigr)=0.
\end{equation} 
 
\no  {\bf Case $(\iota.b)$} (diagrams possibly contributing to $\langle A, G\rangle_{\alpha\mapsto g}$ in \eqref{eq:AG-diff-alpha}):  
Consider diagrams $A\in \mathcal{A}(n;k)$ with an arrow  $\alpha\sim (i,j)$, 
  which admit the following decomposition with respect to $\alpha$ (Figure \ref{fig:i_b-i_c}(left) and Equation \eqref{eq:A-decomp-e} of Lemma \ref{lem:P,Q-decomp}, with $S=\varnothing$, $P\neq\varnothing$) 
\begin{equation}\label{eq:A-decomp-i_b}
 A=P\prec_i Q,\qquad Q=e\prec_2 R,
\end{equation}
 where $\alpha$ corresponds to $e$. Note, that  there is no other arrowhead above the arrowhead of $\alpha$ along the $i$--component of $A$, hence $l(R)=j-i-1$ (see notation in \eqref{eq:r-l-trunk}) and because $P\neq \varnothing$, $A$ cannot be in $\mathcal{A}_{(\iota.a)}(j)$.
 
  For a fixed $I\subset [n]$ and $J=([n]-I)\cup \{i\}$, denote a set of  diagrams $A\in \mathcal{A}(n;t)$ admitting the decomposition \eqref{eq:A-decomp-i_b} with $I=I(P;A)$ and $J=I(Q;A)$, as 
  \[
  \mathcal{A}_{(\iota.b)}=\mathcal{A}_{(\iota.b)}(i,j;I;t).
  \]
   Clearly, not every $I\subset [n]$ yields a nonempty  $\mathcal{A}_{(\iota.b)}(i,j;I;t)$, we must have $I$ and $J$ as given by \eqref{eq:I(P;Z)-I(Q;Z)}; with $k=i$ and $l(R)=j-i-1$. For further convenience, we define
\[
\mathcal{I}_{(\iota.b)}(i,j)=\{I\subset [n]\ |\ \mathcal{A}_{(\iota.b)}(i,j;I;t)\neq \varnothing\}.
\]
Using \eqref{eq:I(P;Z)-I(Q;Z)} and \eqref{eq:A-decomp-i_b} it follows that $I(P;A)=[1,i]\cup [k,n]$, for an appropriate $k>j$ (see Figure \ref{fig:i_b-i_c}, the ``shaded'' part of $P$) and therefore
\begin{equation}\label{eq:I_(i.b)} 
\mathcal{I}_{(\iota.b)}(i,j)=\{[1,i]\cup [k,n]\ |\ k>j\}.
\end{equation}
For $A\in \mathcal{A}_{(\iota.b)}$, any embedding
   $\phi:A\longmapsto G$, $\phi(\alpha)=g$ has two restrictions
\[
\xi_I=\phi|_{P}:P\longmapsto G(I),\qquad \psi_J=\phi|_{Q}:Q\longmapsto G(J),\quad \psi_J(\alpha)=g,\quad J=([n]-I)\cup \{i\}.
\]
Conversely, consider any embedding 
$\xi_I:P\longmapsto G(I)\subset G$, and an arrow $\beta\in P$. If the image arrow: $\xi_I(\beta)$ has its head/tail on the $i$th string of $G$ it must be below the head of $g$ and  since $I\cap J=\{i\}$,  
any pair of embeddings $\xi_I:P\longmapsto G(I)$,  $\psi_J:Q\longmapsto G(J)$, $\psi_J(\alpha)=g$, yields the ``joint'' embedding
\[
\phi=\xi_I\sqcup \psi_J:A\longmapsto G,\quad A=P\prec_i Q.
\]
Since $\sign(\phi)=\sign(\xi_I)\sign(\psi_J)$, we have
\begin{equation}\label{eq:AG_alpha->g}
\begin{split}
\langle A,G\rangle_{\alpha\mapsto g} & =\sum_{\substack{\phi:A\longmapsto G,\\ \phi(\alpha)=g}} \sign(\phi)=\sum_{\substack{\xi_I\sqcup \psi_J:A\longmapsto G,\\ \psi_J(\alpha)=g}} \sign(\xi_I)\sign(\psi_J)\\
& 
= \Bigl(\sum_{\xi_I:P\longmapsto G(I)} \sign(\xi_I)\Bigr)\Bigl(\sum_{\substack{\psi_J:Q\longmapsto G(J),\\ \psi_J(\alpha)=g}} \sign(\psi_J)\Bigr) =\langle P, G(I)\rangle\, \langle Q, G(J)\rangle_{\alpha\mapsto g} 
\end{split}
\end{equation}
(using the notation in \eqref{eq:<A,G(I)>}). On the other hand, since $\alpha$ has a top arrowhead along the $i$--component of $A$, there must be at least one other arrowhead on the $i$--component. Axiom ({\bf d4}) implies that there can be no embedding $\phi:A\longmapsto G'$, such that $\phi(\alpha)=g'$ because there is no arrowhead/tail below the head of $g'$ along the $i$th string of $G'$ (Figure \ref{fig:basepoint-pass-i}(right)). In turn we obtain $\langle A,G'\rangle_{\alpha\mapsto g'}=0$, and from \eqref{eq:AG-diff-alpha}, \eqref{eq:AG_alpha->g}, we may compute, for a fixed $I\in \mathcal{I}_{(\iota.b)}(i,j)$, $J=([n]-I)\cup\{i\}$:
\begin{equation}\label{eq:z-z'-i_b}
\begin{split}
& \sum_{A\in \mathcal{A}_{(\iota.b)}(i,j;I;t)}                                                                                                                                                                                                                                                                                                                                                                                                                                                                                                                                                                                                                                    \sign(A)\bigl(\langle A, G\rangle - \langle A, G'\rangle)  = \sum_{A\in \mathcal{A}_{(\iota.b)}(i,j;I;t)}  \sign(A)\langle A,G\rangle_{\alpha\mapsto g}\\
&\qquad=\sum_{\substack{P\in \mathcal{A}(p;r);\\ p=|I|}} \sum_{\substack{Q\in \mathcal{A}(q;1)\\ Q=e\prec_2 R; q=|J|}} \sign(P)\sign(Q)\langle P, G(I)\rangle\, \langle Q, G(J)\rangle_{\alpha\mapsto g}.
 \end{split}
\end{equation}
where we used the sign identity \eqref{eq:sign-P-Q}, and $r$ is determined by \eqref{eq:P-Q-trunk} with $m=q$ and $t$ given in \eqref{eq:z(G)-z(G')}.
For any $I\in \mathcal{I}_{(\iota.b)}(i,j)$, Equation \eqref{eq:Z_I;j} implies  
\begin{equation}\label{eq:P-Z_{I;r}-i_b}
\sum_{\substack{P\in \mathcal{A}(p;r);\\ p=|I|}} \sign(P)\langle P, G(I)\rangle=Z_{I;t}(G).
\end{equation}
For this reason we will refer to $P$ as a {\em free factor} in the decomposition \eqref{eq:A-decomp-i_b}. Setting $b_I=\sum_{\substack{Q\in \mathcal{A}(q;1)\\ Q=e\prec_2 R; q=|J|}} \sign(Q)\langle Q, G(J)\rangle_{\alpha\mapsto g}$,  we obtain from \eqref{eq:z-z'-i_b} and \eqref{eq:P-Z_{I;r}-i_b}:
\begin{equation}\label{eq:z(G)-z(G')-i_b}
\sum_{I\in \mathcal{I}_{(\iota.b)}(i,j)}\sum_{A\in \mathcal{A}_{(\iota.b)}(i,j;I;t)} \sign(A)\bigl(\langle A, G\rangle - \langle A, G'\rangle)=\sum_{I\in \mathcal{I}_{(\iota.b)}(i,j)} b_I\, Z_{I;t}(G).
\end{equation} 
\no {\bf Case  $(\iota.c)$} (diagrams possibly contributing to  $\langle A, G'\rangle_{\alpha\mapsto g'}$ in \eqref{eq:AG-diff-alpha}): 
Consider diagrams $A\in \mathcal{A}(n;t)$ which have an arrow $\alpha$ with $\alpha\sim (i,j)$, $i=t$ and the arrowhead at the bottom of the $i$--component of $A$. By Lemma \ref{lem:P,Q-decomp}, such diagrams 
 admit the following decomposition with respect to $\alpha$ (Figure \ref{fig:i_b-i_c}(right) and Equation \eqref{eq:A-decomp-e} with $P=\varnothing$)
\begin{equation}\label{eq:A-decomp-iota.c}
A=U\prec_1 S,\qquad  U=e\prec_2 R,
\end{equation}
where $\alpha$ is an arrow in $e$. Note that $\alpha$ is a bottom arrow, i.e. there is no other arrowhead below the head of $\alpha$ along the trunk of $A$. Because $\alpha\sim (t,j)$ we must have $l(R)+r(S)=j-t-1$. As in the case $(\iota.b)$, for a given $I\subset[n]$ and $J=([n]-I)\cup \{i\}$, we define the set of tree diagrams in $\mathcal{A}(n;t)$ admitting  the above decomposition \eqref{eq:A-decomp-iota.c}, with $I(U;A)=J$,  $I(S;A)=I$ as 
 $\mathcal{A}_{(\iota.c)}(t,j;I)$
and $\mathcal{I}_{(\iota.c)}(t,j)=\{I\subset [n]\ |\ \mathcal{A}_{(\iota.c)}(t,j;I)\neq \varnothing\}$.
From \eqref{eq:I(P;Z)-I(Q;Z)} it follows that $I(S;A)=[1,k]$, for $t\leq k<j$, yielding
\begin{equation}\label{eq:I_(i.c)} 
\mathcal{I}_{(\iota.c)}(t,j)=\{[1,k]\ |\ t\leq k<j\}.
\end{equation}
Given $A\in \mathcal{A}_{(\iota.c)}(t,j;I)$, observe that, unless $S=\varnothing$ which is covered by Case $(i.a)$, there is no embedding $\phi:A\longmapsto G$ with $\phi(\alpha)=g$, since there is no arrowhead above $g$ in $G$ along the $t$'th string of $G$.  As a result \eqref{eq:AG-diff-alpha} simplifies to
$\langle A, G\rangle - \langle A, G'\rangle  = -\langle A, G'\rangle_{\alpha\mapsto g'}$. 
Using \eqref{eq:A-decomp-iota.c}, every embedding $\phi':A\longmapsto G'$, $\phi'(\alpha)=g'$ restricts to subdiagrams $U$ and $S$, yielding $\xi'_J:U\longmapsto G'(J)$, $J=I(U;A)$ and $\psi'_I:S\longmapsto G'(I)$, $\psi'_J(\alpha)=g'$, $I=I(S;A)$. Because there is no arrowhead/tail below the head of $g'$ in $G'$, every pair $\xi'_J:U\longmapsto G'(J)$, $\psi'_I:S\longmapsto G'(I)$, $\psi'_J(\alpha)=g'$, gives an embedding $\phi'=\xi'_J\sqcup \psi'_I: A\longmapsto G'$, $\phi'(\alpha)=g'$. An analogous computation as in \eqref{eq:AG_alpha->g}, shows that $S$ is a free factor of decomposition \eqref{eq:A-decomp-iota.c} and 
\begin{equation}\label{eq:AG'_alpha->g'_i}
\langle A,G'\rangle_{\alpha\mapsto g'}=\langle S, G'(I)\rangle \langle U, G'(J)\rangle_{\alpha\mapsto g'},\qquad J=I(U;A),\ I=I(S;A).
\end{equation}
As in \eqref{eq:z-z'-i_b}, given $I\in \mathcal{I}_{(\iota.c)}(t,j)$  we compute
\begin{equation}\label{eq:z(G)-z(G')-i_c}
\begin{split}
& \sum_{A\in \mathcal{A}_{(\iota.c)}(t,j;I)}                                                                                                                                                                                                                                                                                                                                                                                                                                                                                                                                                                                                                                    \sign(A)\bigl(\langle A, G\rangle - \langle A, G'\rangle\bigr)  = -\sum_{A\in \mathcal{A}_{(\iota.c)}(t,j;I)}  \sign(A)\langle A,G'\rangle_{\alpha\mapsto g'}\\
&\qquad \qquad =-\sum_{\substack{S\in \mathcal{A}(s;t);\\ s=|I|}} \sum_{\substack{U\in \mathcal{A}(u;1)\\ U=e\prec_2 R; u=|J|}} \sign(S)\sign(U)\langle S, G(I)\rangle\, \langle U, G(J)\rangle_{\alpha\mapsto g'}= c_I\, Z_{I;t}(G),
\end{split}
\end{equation}
where $c_I=\sum_{\substack{U\in \mathcal{A}(u;1)\\ U=e\prec_2 R; u=|J|}} \sign(U)\langle U, G(J)\rangle_{\alpha\mapsto g'}$. 

Coming back to the main argument in Case $(\iota)$; note that subsets $\mathcal{A}_{(\iota.a)}$,  $\mathcal{A}_{(\iota.b)}$ and $\mathcal{A}_{(\iota.c)}$ of $\mathcal{A}(n;t)$  are disjoint. If $A$ does not belong to their union, then either $1^\circ$: it has no arrow $\alpha\sim (i,j)$, or $2^\circ$: there exists $\alpha\in A$ such that $\alpha\sim (i,j)$, but $\alpha$ is neither the top nor bottom arrow of $A$. For such a diagram $A$, we easily obtain from \eqref{eq:AG-diff-alpha}:
$\langle A, G\rangle - \langle A, G'\rangle=0$. Consequently, in Case $(\iota)$, based on \eqref{eq:z(G)-z(G')-i_a}, \eqref{eq:z(G)-z(G')-i_b} and \eqref{eq:z(G)-z(G')-i_c} we compute
\begin{equation}\label{eq:z(G)-z(G')-case-i}
\begin{split}
 z(G)-z(G') & =\sum_{A\in \mathcal{A}(n;t)} \sign(A)\bigl(\langle A, G\rangle - \langle A, G'\rangle\bigr)\\
 &=\sum_{i,j}\sum_{I\in \mathcal{I}_{(\iota.b)}(i,j)}\sum_{A\in \mathcal{A}_{(\iota.b)}(i,j;I;t)}   \sign(A)\langle A,G\rangle_{\alpha\mapsto g}\\
 & \qquad -\sum_{j}\sum_{I\in \mathcal{I}_{(\iota.c)}(t,j)}\sum_{A\in \mathcal{A}_{(\iota.c)}(t,j;I)}   \sign(A)\langle A,G'\rangle_{\alpha\mapsto g'}\\
 &\ =\sum_{i,j}\sum_{I\in \mathcal{I}_{(\iota.b)}(i,j)} b_I\, Z_{I;t}(G)-\sum_{j}\sum_{I\in \mathcal{I}_{(\iota.c)}(t,j)} c_I\, Z_{I;t}(G).
\end{split} 
\end{equation}
Proving Lemma \ref{lem:basepoint-change} in Case $(\iota)$, where the coefficients $a_{I;j}$ ought to be chosen as $b_I$ or $c_I$ above.

The proofs of the remaining cases have analogous steps,  in the following paragraphs we limit the amount of details  giving the most relevant parts and emphasizing the differences.

\no {\bf Case $(\iota\iota)$:} Suppose $g=(i,j)$ in $G$, $i>j$ has its arrowhead closest to the basepoint along the $i$--component (Figure \ref{fig:basepoint-pass-ii}), and $G'$ is obtained from $G$ by applying move $(a)$ (or equivalently $G$ is obtained from $G'$ via the move $(b)$). The type of diagrams which may contribute to \eqref{eq:AG-diff-alpha}, are ``mirror reflections'' of diagrams described in Case $(\iota)$.

\begin{figure}[ht]
	\includegraphics[width=.8\textwidth]{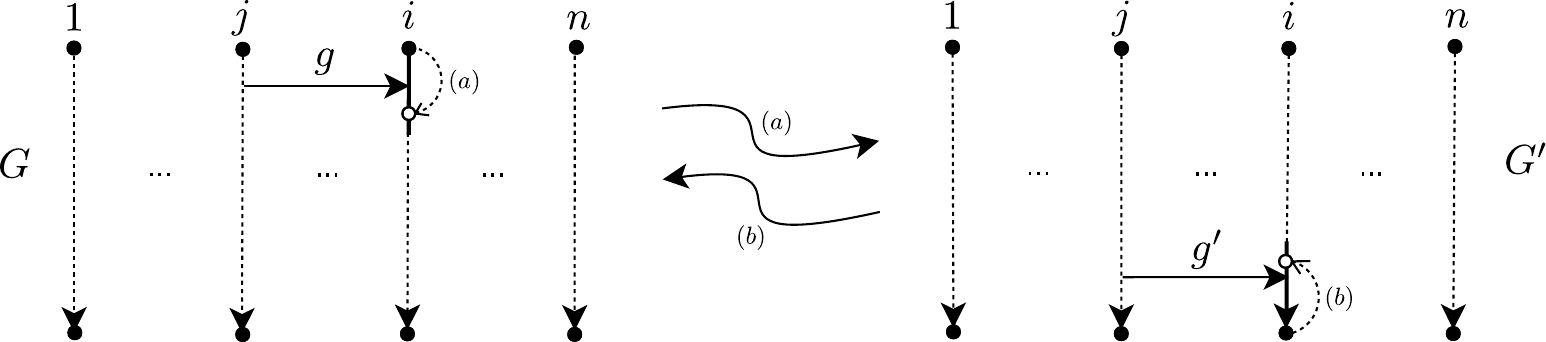}
	\caption{{\bf Case $(\iota\iota)$:} Basepoint moving pass the head of  $g\sim (i,j)$, $i>j$.}\label{fig:basepoint-pass-ii}
\end{figure}
\begin{figure}[ht]
	\includegraphics[width=.35\textwidth]{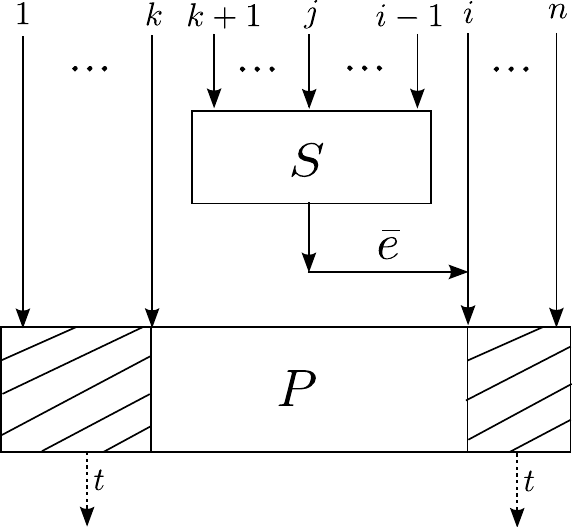}\qquad\qquad \includegraphics[width=.35\textwidth]{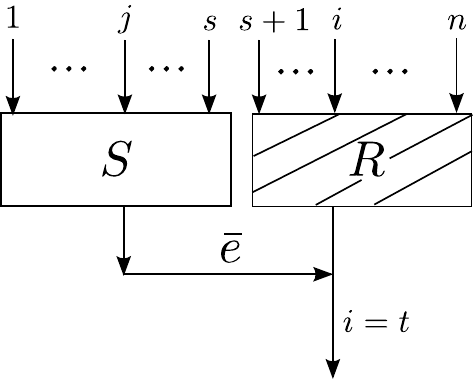}
	\caption{Diagrams in  $(\iota\iota.b)$ and $(\iota\iota.c)$, where $\bar{e}$ corresponds to $\alpha$.}\label{fig:ii_b-ii_c}
\end{figure}%
\no {\bf Case  $(\iota\iota.a)$:} Diagrams $A\in\mathcal{A}(n;t)$ which may yield nonzero both $\langle A, G\rangle_{\alpha\mapsto g}$ and $\langle A, G'\rangle_{\alpha\mapsto g'}$ in \eqref{eq:AG-diff-alpha} need to have an arrow $\alpha\sim (i,j)$ with head on $i$--component  and no other arrow adjacent to that component. It implies that $i$--component is the trunk of $A$ and  $\langle A, G\rangle-\langle A, G'\rangle=\langle A, G\rangle_{\alpha\mapsto g}-\langle A, G'\rangle_{\alpha\mapsto g'}=0$. In fact $A$ admits the decomposition in Figure \ref{fig:ii_b-ii_c}(right) with $R=\varnothing$.

\no {\bf Case  $(\iota\iota.b)$:}  Diagrams  which may yield 
$\langle A, G\rangle_{\alpha\mapsto g}\neq  0$ in \eqref{eq:AG-diff-alpha} need to have an arrow $\alpha\sim (i,j)$ with a top arrowhead along the $i$--component and admit the following decomposition with respect to $\alpha$ (Figure \ref{fig:ii_b-ii_c}(left) and Equation \eqref{eq:A-decomp-bar-e} with $R=\varnothing$):
\begin{equation}\label{eq:A-decomp-ii_b}
A=P\prec_i Q,\qquad Q=\overline{e}\prec_1 S,\qquad P\neq\varnothing,
\end{equation}
where $\alpha$ is the arrow in $\overline{e}$, $v=i(i,A;P)$ and $r(S)=i-j$. Given $I\subset [n]$, $J=([n]-I)\cup \{i\}$, define $\mathcal{A}_{(\iota\iota.b)}(i,j;I;t)$ to be the set of trees in $\mathcal{A}(n;t)$ decomposable according to \eqref{eq:A-decomp-ii_b} with $I(P;A)=I$, and $I(Q;A)=J$. As usual, we let $\mathcal{I}_{(\iota\iota.b)}(i,j)=\{I\subset [n]\ |\ \mathcal{A}_{(\iota\iota.b)}(i,j;I;t)\neq \varnothing\}$.
From \eqref{eq:I(P;Z)-I(Q;Z)}, it follows that $I(P;A)=[1,k ]\cup [i,n]$, for $k<j$, giving
\begin{equation}\label{eq:I_(ii.b)} 
\mathcal{I}_{(\iota\iota.b)}(i,j)=\{[1,k]\cup [i,n]\ |\ k<j\}.
\end{equation}
Since $P$ is the free factor in \eqref{eq:A-decomp-ii_b}, for any $A\in \mathcal{A}_{(\iota\iota.b)}(i,j;I;t)$:
\[
\langle A,G\rangle_{\alpha\mapsto g}=\langle P, G(I)\rangle\, \langle Q, G(J)\rangle_{\alpha\mapsto g},
\]
and since $P\neq \varnothing$; we also have $\langle A,G'\rangle_{\alpha\mapsto g'}=0$ yielding
\begin{equation}\label{eq:z(G)-z(G')-case-ii_b}
 \begin{split}
  & \sum_{A\in \mathcal{A}_{(\iota\iota.b)}(i,j;I;t)}  \sign(A)\langle A,G\rangle_{\alpha\mapsto g}= d_I\, Z_{I;t}(G).
 \end{split}
\end{equation}

\no {\bf Case  $(\iota\iota.c)$:} Diagrams  which may yield 
$\langle A, G'\rangle_{\alpha\mapsto g'}\neq  0$ in \eqref{eq:AG-diff-alpha}, must have no other head/tail below the head of $\alpha$ along the $i$--component of $A$. Such diagrams 
admit the following decomposition with respect to $\alpha$ (Figure \ref{fig:ii_b-ii_c}(right) and Equation \eqref{eq:A-decomp-e} with $P=\varnothing$, $R\neq \varnothing$)
\begin{equation}\label{eq:A-decomp-ii_c}
	A=U\prec_{s+1} R,\qquad  U=\bar{e}\prec_1 S,\qquad S\in \mathcal{A}(s;r)
\end{equation}
where $\alpha$ is an arrow corresponding to $\bar{e}$. Let $\mathcal{A}_{(\iota\iota.c)}(t,j;I)$ denote the set of trees in $\mathcal{A}(n;t)$ decomposable according to \eqref{eq:A-decomp-ii_c} with $I(R;A)=I$, and $I(U;A)=J$, and $\mathcal{I}_{(\iota\iota.c)}(t,j)=\{I\subset [n]\ |\ \mathcal{A}_{(\iota\iota.c)}(t,j;I)\neq \varnothing\}$. From \eqref{eq:I(P;Z)-I(Q;Z)} it follows that $I(R;A)=[s+1,n]$, for $1<s+1\leq t\leq n$ (Figure \ref{fig:ii_b-ii_c}(right)) and  
\begin{equation}\label{eq:I_(ii_c)} 
\mathcal{I}_{(\iota\iota.c)}(i,j)=\{[s+1,n]\ |\ 1<s+1\leq t\leq n\}.
\end{equation}
Since $R$ is the free factor in \eqref{eq:A-decomp-ii_c}, for any $A\in \mathcal{A}_{(\iota\iota.c)}(t,j;I)$ we have $\langle A,G'\rangle_{\alpha\mapsto g'}=\langle R, G(I)\rangle$ $\, \langle U, G(J)\rangle_{\alpha\mapsto g}$, and 
since $R\neq \varnothing$, we also have $\langle A,G\rangle_{\alpha\mapsto g}=0$. Using \eqref{eq:AG-diff-alpha} yields
\begin{equation}\label{eq:z(G)-z(G')-case-ii_c}
\begin{split}
& \sum_{A\in \mathcal{A}_{(\iota\iota.c)}(t,j;I)}  \sign(A)\bigl(\langle A, G\rangle - \langle A, G'\rangle\bigr)= -f_I\, Z_{I;t}(G).
\end{split}
\end{equation}
 Based on $(\iota\iota.a)$, \eqref{eq:z(G)-z(G')-case-ii_b} and \eqref{eq:z(G)-z(G')-case-ii_c}, we obtain
 \begin{equation}\label{eq:z(G)-z(G')-case-ii}
 \begin{split}
  z(G)-z(G') & =\sum_{A\in \mathcal{A}(n;t)} \sign(A)\bigl(\langle A, G\rangle - \langle A, G'\rangle\bigr)\\
 &\ =\sum_{i,j}\sum_{I\in \mathcal{I}_{(\iota\iota.b)}(i,j)} d_I\, Z_{I;t}(G)-\sum_j\sum_{I\in \mathcal{I}_{(\iota\iota.c)}(t,j)} f_I\, Z_{I;t}(G).
 \end{split} 
 \end{equation}
 Therefore, Lemma \ref{lem:basepoint-change} is proven in Case $(\iota\iota)$, where the coefficients $a_{I;j}$ ought to be chosen as $d_I$ or $f_I$ above.
 
\no {\bf Case $(\iota\iota\iota)$:}  Suppose $g=(i,j)$ in $G$, $i<j$ has its arrowtail closest to the basepoint along the $j$th string as pictured in Figure \ref{fig:basepoint-pass-iii}, and $G'$ is obtained from $G$ by applying move $(a)$ (or equivalently $G$ is obtained from $G'$ via the move $(b)$). As in the previous cases,  we need to analyze the right hand side of  \eqref{eq:AG-diff-alpha}.
\begin{figure}[ht]
	\includegraphics[width=.8\textwidth]{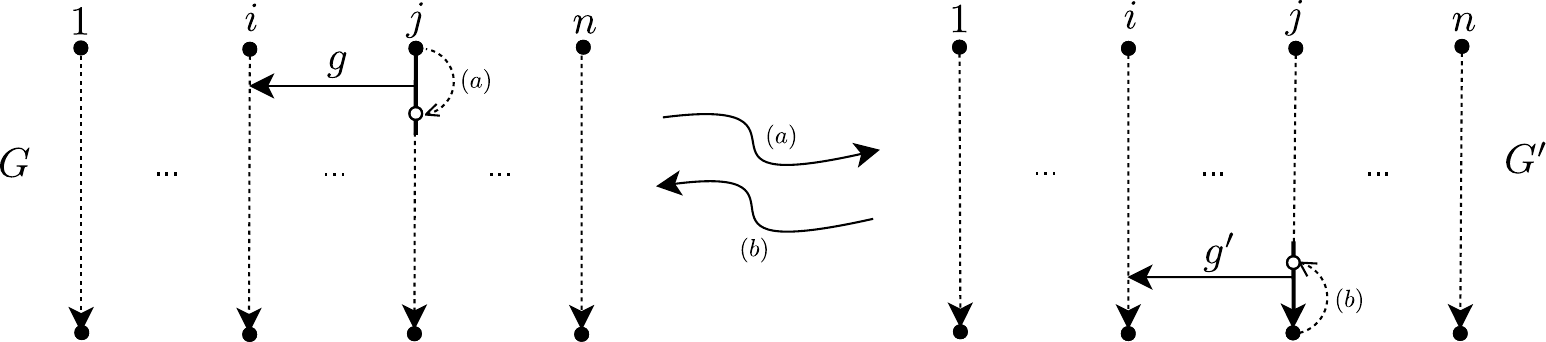}	
	\caption{Case $(\iota\iota\iota)$: Basepoint moving pass the arrow tail of a top $(i,j)$ arrow for $i<j$.}\label{fig:basepoint-pass-iii}
\end{figure}
%
\begin{figure}[ht]
	\includegraphics[width=.35\textwidth]{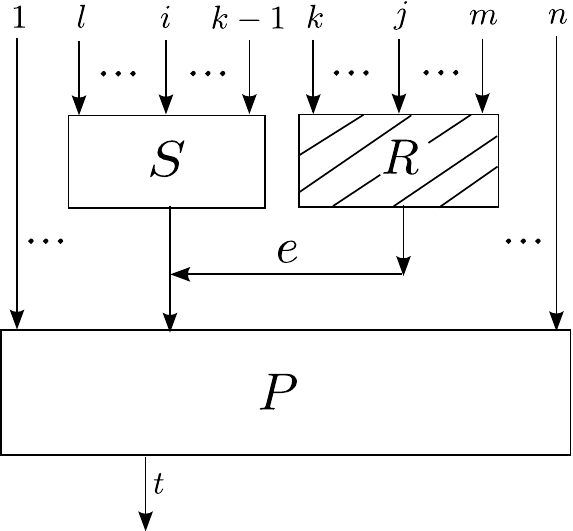}
	\caption{Diagrams in  $(\iota\iota\iota.a)$ and $(\iota\iota\iota.b)$, where $e$ corresponds to $\alpha$.}\label{fig:iii_a-b}
\end{figure}%

\no {\bf Case  $(\iota\iota\iota.a)$:} Because there is no arrowhead/tail above the tail of $g$ along the $j$-component of $G$, any tree $A\in \mathcal{A}(n;t)$  for which $\langle A, G\rangle_{\alpha\mapsto g}\neq 0$ admits the following decomposition (see Figure \ref{fig:iii_a-b} with  $R=\varnothing$ and Lemma \ref{lem:P,Q-decomp})
\[
 A=P\prec_v Q,\qquad Q=e\prec_1 S,\qquad v=i(i,A;P).
\]
Thus there is no other than $\alpha$'s arrowhead/tail in $A$ along the $j$--component of $A$, and 
 terms in $\langle A, G\rangle_{\alpha\mapsto g}$ match those in $\langle A, G'\rangle_{\alpha\mapsto g'}$ yielding
\begin{equation}\label{eq:A,G-diff-zero}
 \langle A, G\rangle_{\alpha\mapsto g}-\langle A, G'\rangle_{\alpha\mapsto g'}=0.
\end{equation}
\no {\bf Case  $(\iota\iota\iota.b)$:} If $\langle A, G'\rangle_{\alpha\mapsto g'}\neq 0$, $A$ needs to have an arrow $\alpha\sim (i,j)$,  with the arrow tail on the $j$--component. If $\alpha$ is the only arrow connected the $j$--component we obtain \eqref{eq:A,G-diff-zero}. Otherwise, $A$ has to decompose as follows   
(Figure \ref{fig:iii_a-b} and Lemma \ref{lem:P,Q-decomp})
\begin{equation}\label{eq:A-decomp-iii_b}
A=U\prec_{w} R,\qquad U=P\prec_{v} (e\prec_1 S),\quad R\neq\varnothing,\qquad v=i(i,U;P),\quad w=i(j,A;U),
\end{equation}
where $\alpha$ corresponds to $e$ and $r(S)+l(R)=j-i-1$. Given $I\subset [n]$, such that $j\in I$, $J=[n]-I$, define $\mathcal{A}_{(\iota\iota\iota.b)}=\mathcal{A}_{(\iota\iota\iota.b)}(i,j;I;t)$ to be the set of trees in $\mathcal{A}(n;t)$ which can be decomposed according to \eqref{eq:A-decomp-iii_b} with $I(R;A)=I$, $I(U;A)=J$ and let
$\mathcal{I}_{(\iota\iota\iota.b)}(i,j)=\{I\subset [n]\ |\ \mathcal{A}_{(\iota\iota\iota.b)}(i,j;I;t)\neq \varnothing\}$.
From \eqref{eq:I(P;Z)-I(Q;Z)}, it follows that $I(R;A)=[k,\ldots,m]$, for $j\leq m\leq n$; $k>i$ yielding 
\begin{equation}\label{eq:I_(iii.b)} 
\mathcal{I}_{(\iota\iota\iota.b)}(i,j)=\{[k,\ldots,m]\ |\ k\leq j\leq m\leq n; k<m; i<k\}.
\end{equation}
The trunk of $A\in \mathcal{A}_{(\iota\iota\iota.b)}$, has index: $t<l$, $t>m$ or $t=i$.
Further, for any $A\in \mathcal{A}_{(\iota\iota\iota.b)}$ we have
$\langle A,G'\rangle_{\alpha\mapsto g'}=\langle R, G(I)\rangle\, \langle U, G(J)\rangle_{\alpha\mapsto g'}$, and 
since $R\neq\varnothing$, we have $\langle A,G\rangle_{\alpha\mapsto g}=0$. Therefore $R$ is a free factor in \eqref{eq:A-decomp-iii_b} and
\begin{equation}\label{eq:z(G)-z(G')-case-iii_b}
\sum_{A\in \mathcal{A}_{(\iota\iota\iota.b)}(i,j;I;t)}  \sign(A)\bigl(\langle A, G\rangle - \langle A, G'\rangle\bigr)=- h_I\, Z_{I;j}(G).
\end{equation}
 Based on $(\iota\iota\iota.a)$ and  $(\iota\iota\iota.b)$ we obtain
 \begin{equation}\label{eq:z(G)-z(G')-case-iii}
 \begin{split}
 z(G)-z(G') & =\sum_{A\in \mathcal{A}(n;t)} \sign(A)\bigl(\langle A, G\rangle - \langle A, G'\rangle\bigr) =-\sum_{i,j}\sum_{I\in \mathcal{I}_{(\iota\iota\iota.b)}(i,j)} h_I\, Z_{I;j}(G),
 \end{split} 
 \end{equation}
 which ends the proof of Lemma \ref{lem:basepoint-change} in Case $(\iota\iota\iota)$.

\no {\bf Case $(\iota v)$:}  Suppose $g=(i,j)$ in $G$, $i>j$ has its arrowtail closest to the basepoint along the $j$th string as pictured in Figure \ref{fig:basepoint-pass-iv}, and $G'$ is obtained from $G$ applying move $(a)$ (or equivalently $G$ is obtained from $G'$ via the move $(b)$).  
\begin{figure}[ht]
	\includegraphics[width=.8\textwidth]{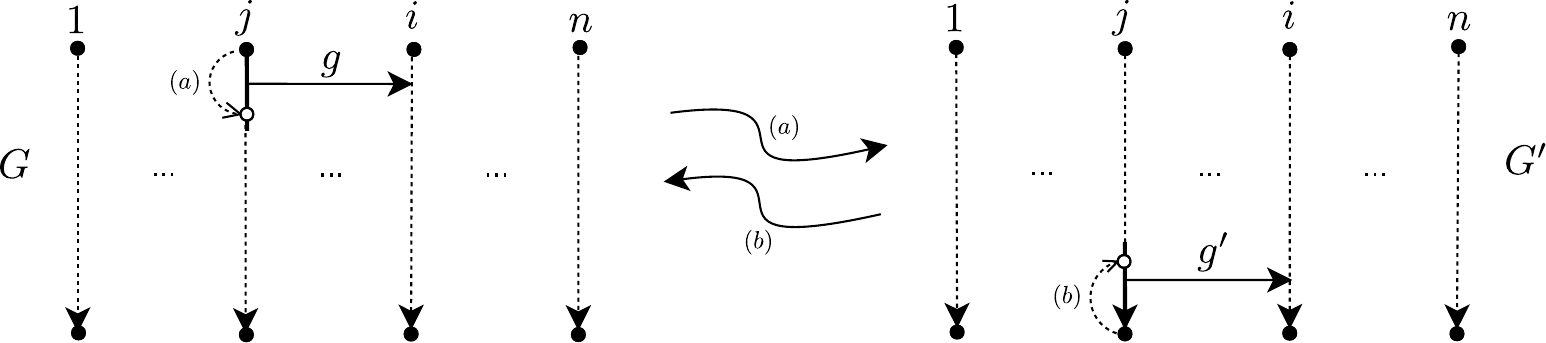}
	\caption{Case $(\iota v)$: Basepoint moving pass the arrowtail of $g=(i,j)$ for $i>j$.}\label{fig:basepoint-pass-iv}
\end{figure}
%
\begin{figure}[ht]
	\includegraphics[width=.35\textwidth]{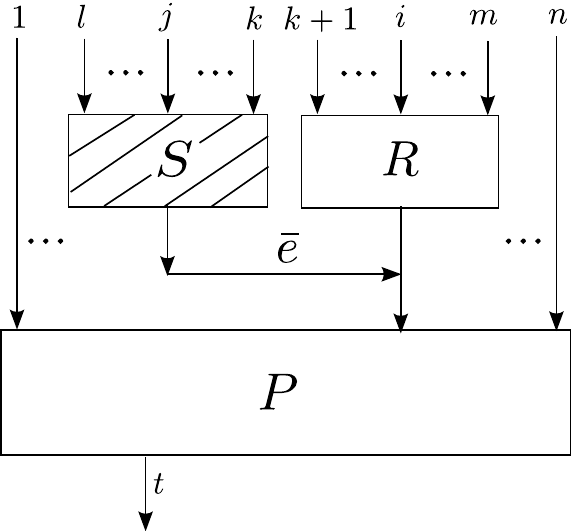}
	\caption{Diagrams in  $(\iota v.a)$ and $(\iota v.b)$, where $\bar{e}$ corresponds to $\alpha$.}\label{fig:iv_a-b}
\end{figure}

\no {\bf Case $(\iota v.a)$:} Any tree $A$ giving $\langle A, G\rangle_{\alpha\mapsto g}\neq 0$, admits the following decomposition, Figure \ref{fig:iv_a-b} with $S=\varnothing$;
\[
A=P\prec_v Q,\qquad Q=\bar{e}\prec_2 R,\qquad v=i(i,A;P).
\]
As in Case $(\iota\iota\iota.a)$, terms in the sum $\langle A, G\rangle_{\alpha\mapsto g}$ match those in $\langle A, G'\rangle_{\alpha\mapsto g'}$ yielding \eqref{eq:A,G-diff-zero}.

\no {\bf Case $(\iota v.b)$:} If $\langle A, G'\rangle_{\alpha\mapsto g'}\neq 0$, $A$ needs to have an arrow $\alpha\sim (i,j)$,  with the bottom arrow tail on the $j$--component. Any relevant tree diagram $A$ in this case not included in Case $(\iota v.a)$ decomposes as pictured in Figure \ref{fig:iv_a-b}, i.e.
\begin{equation}\label{eq:A-decomp-iv_b}
A=U\prec_{w} S,\qquad U=P\prec_{v} (\bar{e}\prec_2 R),\quad S\neq\varnothing,\qquad v=i(i-|S|,U;P),\quad w=i(j,A;U),
\end{equation}
where $\alpha$ corresponds to $\bar{e}$ and $r(S)+l(R)=j-i-1$. Given $I\subset [n]$, such that $j\in I$, $J=[n]-I$,   define $\mathcal{A}_{(\iota v.b)}=\mathcal{A}_{(\iota v.b)}(i,j;I;t)$ to be the set of trees which can be decomposed according to \eqref{eq:A-decomp-iv_b} with $I(S;A)=I$, $I(U;A)=J$ and 
$\mathcal{I}_{(\iota v.b)}(i,j)=\{I\subset [n]\ |\ \mathcal{A}_{(\iota v.b)}(i,j;I;t)\neq \varnothing\}$.
From \eqref{eq:I(P;Z)-I(Q;Z)}, it follows that $I(S;A)=[l,\ldots,k]$, for $j\leq k< i$; $1\leq l\leq j$, giving
\begin{equation}\label{eq:I_(iv.b)} 
\mathcal{I}_{(\iota v.b)}(i,j)=\{[l,\ldots,k]\ |\ 1\leq l\leq j\leq k< i, l<k\}.
\end{equation}
The trunk of $A$, has index $t<l$, $t>m$ or $t=i$.
For any $A\in \mathcal{A}_{(\iota v.b)}$ we obtain
$\langle A,G'\rangle_{\alpha\mapsto g'}=\langle S, G(I)\rangle\, \langle U, G(J)\rangle_{\alpha\mapsto g'}$,
and since $S\neq\varnothing$, we have $\langle A,G\rangle_{\alpha\mapsto g}=0$. Therefore, $S$ is a free factor in \eqref{eq:A-decomp-iv_b} yielding
\begin{equation}\label{eq:z(G)-z(G')-case-iv_b}
\sum_{A\in \mathcal{A}_{(\iota v.b)}(i,j;I;t)}  \sign(A)\bigl(\langle A, G\rangle - \langle A, G'\rangle\bigr)=- w_I\, Z_{I;j}(G).
\end{equation}
Based on $(\iota v.a)$ and  $(\iota v.b)$, we obtain
 \begin{equation}\label{eq:z(G)-z(G')-case-iv}
 \begin{split}
 z(G)-z(G') & =\sum_{A\in \mathcal{A}(n;t)} \sign(A)\bigl(\langle A, G\rangle - \langle A, G'\rangle\bigr) =-\sum_{i,j}\sum_{I\in \mathcal{I}_{(\iota v.b)}(i,j)} w_I\, Z_{I;j}(G),
 \end{split} 
 \end{equation}
 which ends the proof of  Lemma \ref{lem:basepoint-change} in Case $(\iota v)$.

Recapping Cases $(\iota)$--$(\iota v)$ we obtain the identity \eqref{eq:z(G)-z(G')} of Lemma \ref{lem:basepoint-change}, where the index sets $\mathcal{I}_k$ are obtained from: \eqref{eq:I_(i.b)}, \eqref{eq:I_(i.c)}, \eqref{eq:I_(ii.b)}, \eqref{eq:I_(iii.b)}, \eqref{eq:I_(iv.b)}. Specifically,
\[
\mathcal{I}_t  =\bigcup_{i,j}\mathcal{I}_{(\iota.b)}(i,j)\cup \mathcal{I}_{(\iota.c)}(t,j)\cup \mathcal{I}_{(\iota\iota.b)}(i,j)\cup \mathcal{I}_{(\iota\iota.c)}(t,j),\qquad
\mathcal{I}_j=\bigcup_{i}\mathcal{I}_{(\iota\iota\iota.b)}(i,j)\cup \mathcal{I}_{(\iota v.b)}(i,j). \qedhere
\]
\end{proof}

\subsection{Reidemeister moves}
 Figure \ref{fig:reidemeister-moves} shows the Reidemeister moves: $\mathbf{r1}$ through $\mathbf{r3}$ of link diagrams away from the basepoints and the corresponding local arrow changes in their Gauss diagrams. A goal for this subsection is to prove the following 
\begin{figure}[!ht] 
	\centering
	\includegraphics[width=0.8\textwidth]{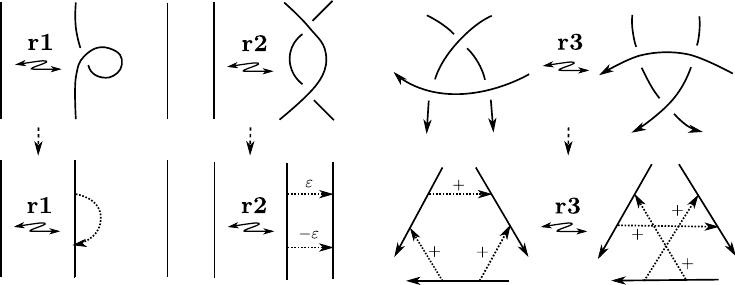}
	\caption{Reidemeister moves $\mathbf{r1}$--$\mathbf{r3}$ locally on the link diagram (far from the basepoints) and the corresponding local changes of Gauss diagrams (c.f. \cite{Kravchenko-Polyak:2011}).}\label{fig:reidemeister-moves}
\end{figure}	
\begin{lem}\label{lem:reidemeister}
 Given a Gauss diagram $G$ of a closed based $n$--component link, let $G'$ be a diagram obtained from $G$ by applying locally one of the moves  $\mathbf{r1}$--$\mathbf{r3}$ away from the basepoints of the link components. Then for any $t$ we have
 \[
  \langle Z_{n;t}, G\rangle=\langle Z_{n;t}, G'\rangle.
 \]
\end{lem}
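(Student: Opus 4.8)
The plan is to reduce each of the three moves to a local computation involving only the arrows that the move creates, deletes, or slides. First I would note that $G$ and $G'$ agree outside the small disk where the move is performed, so any representation $\phi$ of a tree diagram $A\in\mathcal{A}(n;t)$ whose image avoids the arrows altered by the move is simultaneously a representation into $G$ and into $G'$, with the same sign; these contribute equally and drop out of the difference. This is the same cancellation already exploited in \eqref{eq:A,G=A,G'}. Consequently $\langle Z_{n;t},G\rangle-\langle Z_{n;t},G'\rangle$ depends only on representations that meet an altered arrow. Moreover, since the move takes place away from every basepoint, the vertical ordering of arrowheads and arrowtails inside the disk that is constrained by axioms \textbf{(d1)}--\textbf{(d4)} is exactly the ordering a string link would present there; thus the local analysis is identical to the one Kravchenko and Polyak carried out in proving Theorem \ref{thm:Z_j}, and it suffices to reproduce it for each move type.

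For $\mathbf{r1}$ and $\mathbf{r2}$ the conclusion holds already at the level of an individual diagram $A$. The move $\mathbf{r1}$ inserts or removes a single arrow $g$ whose head and tail lie on one and the same string; by axiom \textbf{(d1)} no arrow of $A$ can be sent to $g$, so every representation avoids $g$ and $\langle A,G\rangle=\langle A,G'\rangle$. The move $\mathbf{r2}$ introduces two arrows $g^{+}$ and $g^{-}$ that are parallel, share the same pair of coordinates $(i,j)$, lie at mutually adjacent positions, and carry opposite signs. If $i=j$ they are excluded by \textbf{(d1)} exactly as in $\mathbf{r1}$; if $i\neq j$ then, because axiom \textbf{(d2)} forces at most one arrow of $A$ to have its tail on a given leaf, a representation uses at most one of $g^{+},g^{-}$, and the operation reassigning this arrow from $g^{+}$ to $g^{-}$ (admissible precisely because no other endpoint lies between the two adjacent heads, or between the two adjacent tails) is a sign-reversing involution on the affected representations. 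Their contributions cancel and $\langle A,G\rangle=\langle A,G'\rangle$ again. Summing over $\mathcal{A}(n;t)$ gives the claim for $\mathbf{r1}$ and $\mathbf{r2}$.

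The hard part will be $\mathbf{r3}$, where the three arrows $g_{AB},g_{BC},g_{AC}$ all survive the move with unchanged signs and only their mutual position changes, by a simultaneous transposition of two endpoints on each of the three involved strings. Here invariance no longer holds diagram by diagram and must come from cancellation across the full sum $Z_{n;t}=\sum_{A}\sign(A)\,A$. I would organize the affected representations according to which of the three arrows a tree diagram actually realizes and according to the local shape of its tree near the triple point; using the decomposition of a diagram with respect to a prescribed arrow (Lemma \ref{lem:P,Q-decomp}) together with the shuffle identities of Lemma \ref{lem:shuffle}, each affected diagram contributing to $\langle Z_{n;t},G\rangle$ is matched with one contributing to $\langle Z_{n;t},G'\rangle$ by transporting its local structure across the endpoint swap, with the complementary \emph{free factor} common to both. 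The main obstacle is the bookkeeping of this matching: one must verify that planarity and the prescribed trunk placement are preserved under the swap and that no affected representation is left unpaired. Granting this, the three local cancellations assemble to $\langle Z_{n;t},G\rangle=\langle Z_{n;t},G'\rangle$, which is precisely the content the Kravchenko--Polyak computation supplies in the string-link setting and which, by the locality observation above, transfers verbatim to closed links.
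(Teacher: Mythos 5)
Your handling of $\mathbf{r1}$ and $\mathbf{r2}$ is correct and essentially identical to the paper's: axiom ({\bf d1}) excludes the $\mathbf{r1}$ arrow, and for $\mathbf{r2}$ the sign-reversing reassignment $g^{+}\leftrightarrow g^{-}$ cancels the affected representations diagram by diagram. The locality observation (moves occur away from basepoints, so the analysis matches the string-link setting of Kravchenko--Polyak) is also the paper's stated justification for reusing that argument.

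The gap is in $\mathbf{r3}$, which is the real content of the lemma. You correctly recognize that invariance fails diagram-by-diagram and must come from cancellation across the full signed sum $Z_{n;t}=\sum_A \sign(A)\,A$, but you then assert the matching rather than exhibit it, and the form you assert --- ``each affected diagram contributing to $\langle Z_{n;t},G\rangle$ is matched with one contributing to $\langle Z_{n;t},G'\rangle$'' --- is not what happens in all subcases. The paper's proof splits according to the relative order of the three strings $i_1,i_2,i_3$. In cases $(v.a)$--$(v.d)$ the cancellation is of the type you describe: a diagram $A$ containing the local subtree $D_{(v.a)}$ (arrows $\{\alpha,\beta\}$) contributes $\langle A,G\rangle_{\alpha\to g,\,\beta\to h}$, and it is paired, by the bijection $f$ that replaces $D_{(v.a)}$ with $D'_{(v.a)}$, with a \emph{different} tree diagram $f(A)$ (arrows $\{\alpha,\gamma\}$) contributing on the $G'$ side; the cancellation requires checking $\sign(D_{(v.a)})=\sign(D'_{(v.a)})$. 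But in cases $(v\iota.a)$ and $(v\iota.b)$ (when $i_3$ lies between $i_1$ and $i_2$), no affected diagram contributes to $\langle Z_{n;t},G\rangle$ at all: both surviving terms, $\langle A,G'\rangle_{\alpha\to g',\,\beta\to h'}$ and $\langle f(A),G'\rangle_{\alpha\to g',\,\gamma\to k'}$, lie on the $G'$ side, and they cancel against \emph{each other} only because the two local subtrees carry opposite signs, $\sign(D'_{(v\iota.a)})=-\sign(D''_{(v\iota.a)})$, under the rule \eqref{eq:sign(A)-def}. So the pairing is not a transport of one representation ``across the endpoint swap'' but, in these orderings, an internal cancellation within $\langle Z_{n;t},G'\rangle$ driven by the signs of the tree diagrams themselves --- not merely the signs of the representations, which is all that your involution argument for $\mathbf{r2}$ tracks. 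This sign bookkeeping is exactly what your phrase ``granting this'' conceals, and it is the one step where the sum over all of $\mathcal{A}(n;t)$ with coefficients $\sign(A)$ is indispensable. A smaller point: Lemmas \ref{lem:P,Q-decomp} and \ref{lem:shuffle} are not the natural tools here; the paper's $\mathbf{r3}$ argument needs only the notion of a diagram containing a prescribed local subtree near the triple point, together with the subtree-replacement bijection, and additionally must dispose of the diagrams containing exactly one of the three arrows (cancelled by locally redefining $\phi(\beta)=h$ to $\phi'(\beta)=h'$), a case your sketch does not address.
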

The proof given below follows closely the argument of Kravchenko and Polyak \cite{Kravchenko-Polyak:2011} and is included mostly for completeness.

\begin{proof}[Proof of Lemma \ref{lem:reidemeister}]

\no {\bf Case $\mathbf{r1}$:} Let $G$ be a Gauss diagram of a based link $L$, and $G'$ a diagram obtained by the move $\mathbf{r1}$, i.e. $G$ and $G'$ differ by a single arrow $g$ shown in Figure \ref{fig:reidemeister-moves}.
Since none of the tree diagrams $A\in \mathcal{A}(n;1)$ have arrows with the heads and tails on the same segment, and there exists no embedding $\phi:A\longmapsto G$, such that $\phi(\alpha)=g$ for some $\alpha\in A$. Therefore every such embedding is also an embedding into $G'$ and we obtain $\langle A, G\rangle = \langle A, G'\rangle$, proving $z(G)=z(G')$  for $\mathbf{r1}$. 

\medskip
\no {\bf Case $\mathbf{r2}$:} The diagram $G'$ is obtained from $G$ by adding locally two parallel arrows $g_{+}$ and $g_{-}$ of opposite sign i.e. both arrows have their head on the $i$th string and the tail on the $j$th string of $G'$. By uniqueness of arrows in tree diagrams each embedding  $\phi:A\longmapsto G'$ can map $\alpha\sim (i,j)$ in $A$ to either $g_{+}$ or $g_{-}$, therefore 
\begin{equation}\label{eq:r2-AG'}
\langle A, G'\rangle=\langle A, G'\rangle_{\alpha\to g_{+}}+\langle A, G'\rangle_{\alpha\to g_{-}}+ \langle A, G'\rangle_{\alpha\not\to \{g_{+},g_{-}\}}.
\end{equation}
\no Every $\phi':A\longmapsto G'$, $\phi'(\alpha)\not\in \{g_{+},g_{-}\}$, factors through the inclusion $\iota:G'-\{g_{+},g_{-}\}\longmapsto G'$, i.e. $\phi'=\iota\circ\phi$. Since $G=G'-\{g_{+},g_{-}\}$: $\phi:A\longmapsto G$ is an embedding into $G$ thus $\sign(\phi')=\sign(\phi)$ and we obtain  $\langle A, G\rangle=\langle A, G'\rangle_{\alpha\not\to \{g_{+},g_{-}\}}$. Next consider $\phi':A\longmapsto G'$, $\phi'(\alpha)=g_{+}$, i.e. an embedding which may contribute to $\langle A, G'\rangle_{\alpha\to g_{+}}$, every such embedding can be redefined as $\phi:A\longmapsto G'$, where $\phi|_{A-\{\alpha\}}=\phi'|_{A-\{\alpha\}}$ and $\phi(\alpha)=g_{-}$, since $g_{+}$ and $g_{-}$ have opposite signs in $G'$ we have $\sign(\phi)=-\sign(\phi')$. Therefore terms in $\langle A, G'\rangle_{\alpha\to g_{+}}$ are in one--to--one correspondence with terms in $\langle A, G'\rangle_{\alpha\to g_{-}}$, but with opposite sign, yielding $\langle A, G'\rangle_{\alpha\to g_{+}}+\langle A, G'\rangle_{\alpha\to g_{-}}=0$. Collecting the above facts we obtain: $\langle A, G'\rangle=\langle A, G\rangle$, proving $z(G)=z(G')$ for $\mathbf{r2}$.

\medskip
\no {\bf Case $\mathbf{r3}$:} This is the most involved case, which includes subcases corresponding to the order of components $i_1$, $i_2$ and $i_3$ of $G$ on which the {\bf r3}--move is performed. 

 The rearrangement of arrows $g\sim (i_1,i_2)$, $h\sim (i_1, i_3)$ and $k 
 \sim (i_2, i_3)$ under the $\mathbf{r3}$ move is shown in Figure 
\ref{fig:reidemeister-moves}. Given any tree diagram $A\in \mathcal{A}(n;t)$, let 
\begin{equation}\label{eq:alpha-beta-gamma} 
\alpha\sim (i_1, i_2),\quad \beta\sim (i_1, i_3),\quad \gamma\sim (i_2,i_3).
\end{equation}
 Note that all three arrows cannot be in $A$, as it would contradict planarity of $A$. Let us determine those trees $A$, which can contribute to $\langle A, G\rangle-\langle A, G'\rangle$. Clearly, if $A$ contains none of the arrows in \eqref{eq:alpha-beta-gamma} any embedding $\phi:A\longmapsto G$ factors through the inclusion $G-\{g,h,k\}\hookrightarrow G$ 
 and since $G-\{g,h,k\}=G'-\{g',h',k'\}$, $\phi$ also embeds $A$ in $G'$ then, analogously as in the case of {\bf r2}, we conclude $\langle A, G\rangle=\langle A, G'\rangle$. 
 
 If $A$ contains exactly one of the arrows in \eqref{eq:alpha-beta-gamma}, without loss of generality, suppose $\beta\in A$, then we have an analog of \eqref{eq:AG-diff-alpha}:
 \[
\langle A, G\rangle - \langle A, G'\rangle  = \bigl(\langle A, G\rangle_{\beta\mapsto h}-\langle A, G'\rangle_{\beta\mapsto h'}\bigr)+\bigl(\langle A, G\rangle_{\beta\not\mapsto h}-\langle A, G'\rangle_{\beta\not\mapsto h'}\bigr).
 \]
 The second term of the above sum vanishes by the same reasoning as in the previous paragraph. The first term also vanishes because every embedding $\phi:A\longmapsto G$, $\phi(\beta)=h$ can be locally redefined as  $\phi':A\longmapsto G'$, $\phi'|_{A-\{\beta\}}=\phi|_{A-\{\beta\}}$, $\phi'(\beta)=h'$ and $\sign(\phi)=\sign(\phi')$.
\begin{figure}[!ht] 
	\begin{overpic}[width=0.8\textwidth]{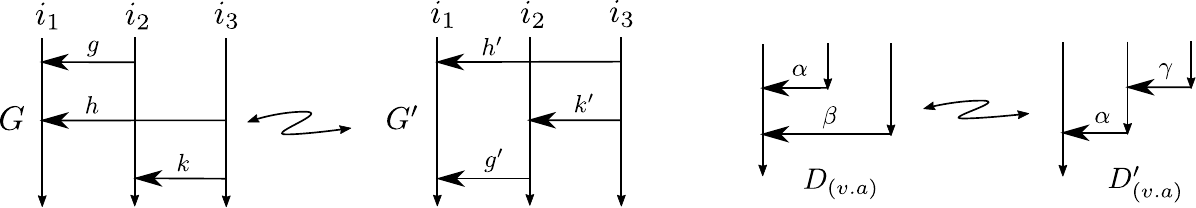}
\end{overpic}
	\caption{Case $(v.a)$: Reidemeister $\mathbf{r3}$ for $i_1<i_2<i_3$. Matching arrows: $\{\alpha\to g, \beta\to h\}$, $\{\alpha\to g', \gamma\to k'\}$.}\label{fig:r3-v_a}
\end{figure}	
 \smallskip
The remaining case is when $A$ contains exactly two of the arrows from \eqref{eq:alpha-beta-gamma}. The following cases depend on the ordering of strings in $G$. 

\no {\bf Case $(v.a)$: $i_1<i_2<i_3$} and either $(a):$ $\{\alpha, \beta\}\subset A$, $(b):$ $\{\alpha, \gamma\}\subset A$ or $(c)$ $\{\beta,\gamma\}\subset A$. 

First suppose $A$ satisfies $(a)$, given an embedding $\phi:A\longmapsto G$, the only possibilities are $1^\circ$: $ \phi(A)\subset G-\{g,h,k\}$, $2^\circ$:  $\phi(\alpha)=g$ and $\phi(\beta)\neq h$ or  $\phi(\alpha)\neq g$ and $\phi(\beta)= h$;  $3^\circ$: $\phi(\alpha)=g$ and $\phi(\alpha)=h$. Therefore, we have
\begin{equation}\label{eq:r3-1-AG-alpha-beta}
  \langle A,G\rangle = \langle A,G\rangle_{\{\alpha,\beta\}\not\to \{g,h,k\}}+\langle A,G\rangle_{\alpha\to g, \beta\not\to h}+\langle A,G\rangle_{\alpha\not\to g, \beta\to h}+\langle A,G\rangle_{\alpha\to g, \beta\to h}.
\end{equation}
On the other hand there is no embedding $\phi':A\longmapsto G'$, with $\phi'(\alpha)=g'$ and $\phi'(\beta)=h'$, and  \eqref{eq:r3-1-AG-alpha-beta} becomes
\begin{equation}\label{eq:r3-1-AG'-alpha-beta}
\langle A,G'\rangle = \langle A,G'\rangle_{\{\alpha,\beta\}\not\to \{g',h',k'\}}+\langle A,G'\rangle_{\alpha\to g', \beta\not\to h'}+\langle A,G'\rangle_{\alpha\not\to g', \beta\to h'}.
\end{equation}
 \begin{figure}[!ht] 
 	\begin{overpic}[width=0.8\textwidth]{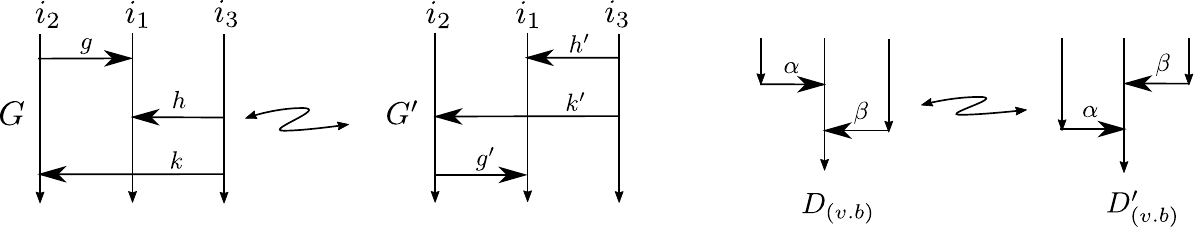}
 	\end{overpic}
 	\caption{Case $(v.b)$: Reidemeister $\mathbf{r3}$ for $i_2<i_1<i_3$.}\label{fig:r3-v_b}
 \end{figure}	
 \begin{figure}[!ht] 
 	\begin{overpic}[width=0.8\textwidth]{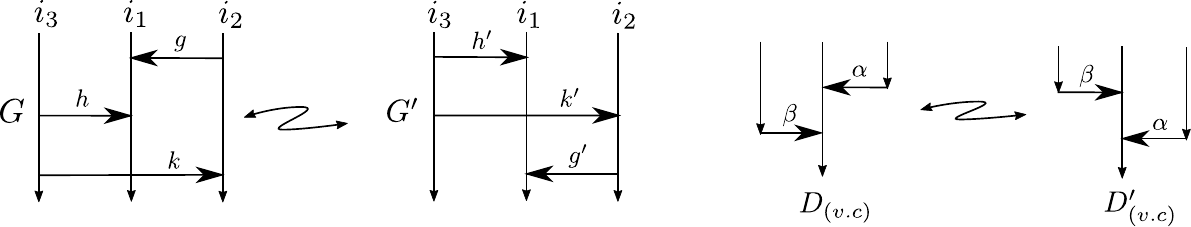}
 	\end{overpic}
 	\caption{Case $(v.c)$: Reidemeister $\mathbf{r3}$ for $i_3<i_1<i_2$.}\label{fig:r3-v_c}
 \end{figure}	
 \begin{figure}[!ht] 
 	\begin{overpic}[width=0.8\textwidth]{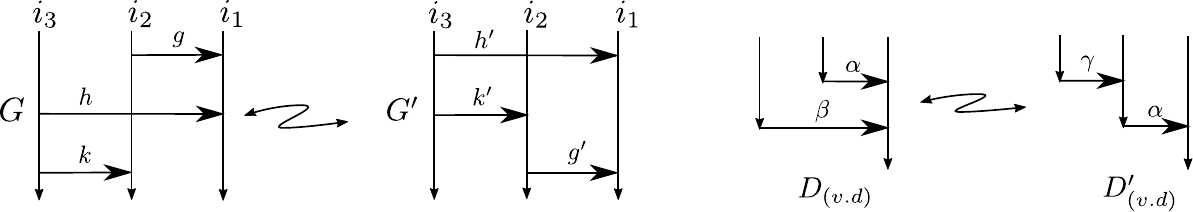}
 	\end{overpic}
 	\caption{Case $(v.d)$: Reidemeister $\mathbf{r3}$ for $i_3<i_2<i_1$.}\label{fig:r3-v_d}
 \end{figure}	

\no It is clear that the first three terms of \eqref{eq:r3-1-AG-alpha-beta} and \eqref{eq:r3-1-AG'-alpha-beta} agree, giving 
\begin{equation}\label{eq:r3-1-AG-AG'-alpha-beta}
 \langle A,G\rangle-\langle A,G'\rangle=\langle A,G\rangle_{\alpha\to g, \beta\to h}.
\end{equation}
In order for $A$ to contribute to $\langle A,G\rangle_{\alpha\to g, \beta\to h}$, $A$ must contain a subtree $D_{(v.a)}$ shown in Figure \ref{fig:r3-v_a}, (because the move is local, and there are no other arrows neighboring $g$, $h$, $k$). Denote the set of such diagrams by 
\[
 \mathcal{A}_{D_{(v.a)}}=\bigl\{ A\in \mathcal{A}(n;t)\ |\ D_{(v.a)}\subset A, I(D_{(v.a)};A)=\{i_1,i_2,i_3\}\bigr\}.
\]
\no In Case $(b)$: $\{\alpha,\gamma\}\subset A$, fully analogous steps as in \eqref{eq:r3-1-AG-alpha-beta}--\eqref{eq:r3-1-AG-AG'-alpha-beta}  yield
\begin{equation}\label{eq:r3-1-AG-AG'-alpha-gamma}
\langle A,G\rangle-\langle A,G'\rangle=-\langle A,G'\rangle_{\alpha\to g', \gamma\to k'}.
\end{equation}
For $A$ to contribute to $\langle A,G\rangle_{\alpha\to g', \gamma\to k'}$, it must contain a subtree $D'_{(v.a)}$ shown in Figure \ref{fig:r3-v_a}, denote the set of such diagrams by 
\[
\mathcal{A}_{D'_{(v.a)}}=\bigl\{ A\in \mathcal{A}(n;t)\ |\ D'_{(v.a)}\subset A, I(D'_{(v.a)};A)=\{i_1,i_2,i_3\}\bigr\}.
\]
\no In Case $(c):$ $\{\beta,\gamma\}\subset A$, there is no embedding $\phi:A\longrightarrow G$, with $\phi(\beta)=h$ and $\phi(\gamma)=k$, as well as no embedding $\phi':A\longrightarrow G'$, with $\phi'(\beta)=h'$ and $\phi'(\gamma)=k'$, yielding
\begin{equation}\label{eq:r3-1-AG-AG'-beta-gamma}
\langle A,G\rangle-\langle A,G'\rangle=0.
\end{equation}
As a result we obtain
\begin{equation}\label{eq:z(G)-z(G')-r3-1}
z(G)-z(G')=\sum_{A\in  \mathcal{A}_{D_{(v.a)}}} \langle A,G\rangle_{\alpha\to g, \beta\to h}-\sum_{A\in  \mathcal{A}_{D'_{(v.a)}}} \langle A,G'\rangle_{\alpha\to g', \beta\to k'}.
\end{equation}
Observe that there is a bijection  $f:\mathcal{A}_{D_{(v.a)}}\longrightarrow\mathcal{A}_{D'_{(v.a)}}$, for a given $A\in \mathcal{A}_{D_{(v.a)}}$ defined simply by replacing $D_{(v.a)}$ subdiagram with $D'_{(v.a)}$. For every $A\in \mathcal{A}_{D_{(v.a)}}$, and $f(A)\in \mathcal{A}_{D'_{(v.a)}}$,  an embedding $\phi:A\longmapsto G$ may be redefined as $\phi':f(A)\longmapsto G'$, where $\phi'|_{f(A)-\{\alpha,\gamma\}}=\phi|_{A-\{\alpha,\beta\}}$ (since $f(A)-\{\alpha,\gamma\}=A-\{\alpha,\beta\}$), and $\phi'(\alpha)=g'$ and $\phi'(\gamma)=k'$. Since $\sign(D_{(v.a)})=\sign(D'_{(v.a)})$ we have $\sign(\phi)=\sign(\phi')$ (all arrows $g$, $h$, $k$ in $G$, and $g'$, $h'$, $k'$ in $G'$ have the positive sign). In turn we obtain
\[
\langle A,G\rangle_{\alpha\to g, \beta\to h}=\langle f(A),G'\rangle_{\alpha\to g', \beta\to k'}.
\]
Since $f$ is a bijection we may conclude that the right hand side of \eqref{eq:z(G)-z(G')-r3-1} vanishes and $z(G)=z(G')$ proving the claim in Case $(v.a)$.

\no {\bf Cases $(v.b)$: $i_2<i_1<i_3$, $(v.c)$: $i_3<i_1<i_2$, $(v.d)$: $i_3<i_2<i_1$}.
 For these Cases the argument is the same as in Case $(v.a)$, except $\mathcal{A}_{D_{(v.\ast)}}$ and $\mathcal{A}_{D'_{(v.\ast)}}$ must be used in place of $\mathcal{A}_{D_{(v.a)}}$ and $\mathcal{A}_{D'_{(v.a)}}$ as shown in Figures \ref{fig:r3-v_b}, \ref{fig:r3-v_c} and \ref{fig:r3-v_d}.

\no {\bf Cases $(v\iota.a)$: $i_1<i_3<i_2$, and $(v\iota.b)$: $i_2<i_3<i_1$};
As before the two subcases (Figure \ref{fig:r3-vi_a} and Figure \ref{fig:r3-vi_b}) are analogous, let us consider $(v\iota.a)$ in detail, see Figure \ref{fig:r3-vi_a}. Either $(a):$ $\{\alpha, \beta\}\subset A$, $(b):$ $\{\alpha, \gamma\}\subset A$ or $(c)$ $\{\beta,\gamma\}\subset A$. In Case $(a)$, observe that for any $A\in \mathcal{A}(n;t)$, there is no embedding $\phi:A\longmapsto G$ such that $\phi(\alpha)=h$, $\phi(\beta)=g$. Using an analogous identity as Equation \eqref{eq:r3-1-AG-alpha-beta}, we obtain
\begin{equation}\label{eq:r3-2-AG-AG'-alpha-beta}
\langle A,G\rangle-\langle A,G'\rangle=-\langle A,G'\rangle_{\alpha\to g', \beta\to h'}.
\end{equation}
The set of diagrams which may contribute to the left hand side in \eqref{eq:r3-2-AG-AG'-alpha-beta} is denoted by 
\[
\mathcal{A}_{D'_{(v\iota.a)}}=\bigl\{ A\in \mathcal{A}(n;t)\ |\ D'_{(v\iota.a)}\subset A, I(D'_{(v\iota.a)};A)=\{i_1,i_3,i_2\}\bigr\}.
\]
In Case $(b)$ there is no embedding which may contribute to $\langle A,G\rangle_{\alpha\to g, \gamma\to k}$ thus
\begin{equation}\label{eq:r3-2-AG-AG'-alpha-gamma}
\langle A,G\rangle-\langle A,G'\rangle=-\langle A,G'\rangle_{\alpha\to g', \gamma\to k'}.
\end{equation}
\[
\mathcal{A}_{D''_{(v\iota.a)}}=\bigl\{ A\in \mathcal{A}(n;t)\ |\ D''_{(v\iota.a)}\subset A, I(D''_{(v\iota.a)};A)=\{i_1,i_3,i_2\}\bigr\}.
\]
In Case $(c)$ there is no embedding of $A$ which may contribute either to $\langle A,G\rangle_{\beta\to h, \gamma\to k}$ or to $\langle A,G'\rangle_{\beta\to h', \gamma\to k'}$.  
As a result we obtain
\begin{equation}\label{eq:z(G)-z(G')-r3-2}
z(G)-z(G')=-\sum_{A\in  \mathcal{A}_{D'_{(v\iota.a)}}} \langle A,G'\rangle_{\alpha\to g', \beta\to h'}-\sum_{A\in  \mathcal{A}_{D''_{(v\iota.a)}}} \langle A,G'\rangle_{\alpha\to g', \gamma\to k'}.
\end{equation}
\begin{figure}[!ht] 
	\begin{overpic}[width=0.85\textwidth]{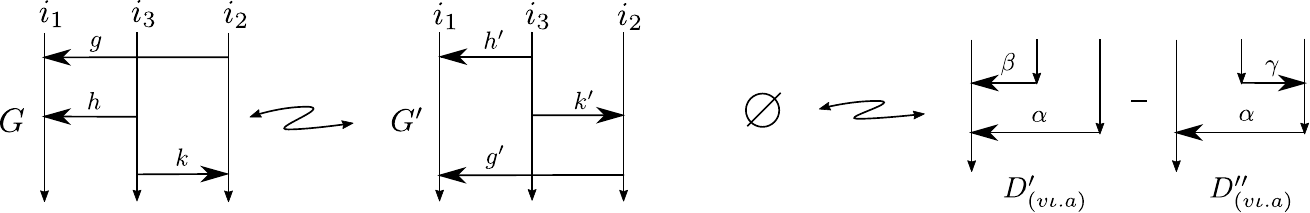}	
	\end{overpic}
	\caption{Case $(v\iota.a)$: Reidemeister $\mathbf{r3}$ for $i_1<i_3<i_2$.}\label{fig:r3-vi_a}
\end{figure}	
\smallskip

\begin{figure}[!ht] 
	\begin{overpic}[width=0.85\textwidth]{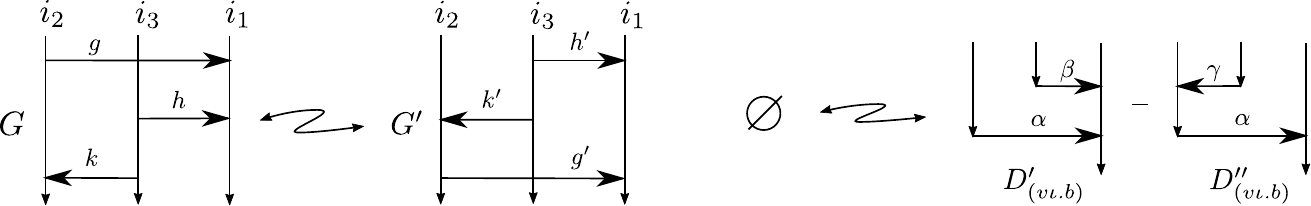}
	\end{overpic}
	\caption{Case $(v\iota.b)$: Reidemeister $\mathbf{r3}$ for $i_2<i_3<i_1$.}\label{fig:r3-vi_b}
\end{figure}
Since diagrams in $\mathcal{A}_{D'_{(v\iota.a)}}$($\mathcal{A}_{D''_{(v\iota.a)}}$) all must contain $D'_{(v\iota.a)}$($D''_{(v\iota.a)}$) as a local subdiagram, we observe that there is a bijection $f:\mathcal{A}_{D'_{(v\iota.a)}}\longmapsto \mathcal{A}_{D''_{(v\iota.a)}}$ given by replacing $D'_{(v\iota.a)}$ in $A\in \mathcal{A}_{D'_{(v\iota.a)}}$ with $D'_{(v\iota.a)}$. Moreover, for any embedding $\phi:A\longmapsto G'$ there exists a corresponding $\phi':f(A)\longmapsto G'$, defined in the obvious way, and since $\sign(D'_{(v\iota.a)})=-\sign(D''_{(v\iota.a)})$, we have $\sign(\phi)=-\sign(\phi')$, and therefore
\[
\langle A,G'\rangle_{\alpha\to g', \beta\to h'}=-\langle f(A),G'\rangle_{\alpha\to g', \gamma\to k'}.
\]
As a result the right hand side of \eqref{eq:z(G)-z(G')-r3-2} vanishes, and $z(G)=z(G')$ as required. 

Cases above justify the claim of Lemma \ref{lem:reidemeister}.
\end{proof}

\begin{proof}[Proof of Main Theorem]
For convenience, and without loss of generality, we will work with $I=[n]$ and $Z_{n;t}$. 
Let $L$ be an $n$--component based link and $G=G_L$ its Gauss diagram, recall  $Z_{n;t}(L)=\langle Z_{n;t}, G_L\rangle$, according to \eqref{eq:Z_I;j} and \eqref{eq:Z_I;j-poly}.
Suppose $G'=G'_L$ is a diagram, obtained by moving basepoints along the components of $L$ in an arbitrary way. Naturally, there is a sequence of diagrams 
\[
G=G_0,\ G_1,\ \ldots,\ G_k=G',
\]
where $G_i$ and $G_{i+1}$, $0\leq i<k$ differ just by a single crossing pass as in Cases $(\iota)$--$(\iota v)$ of Lemma \ref{lem:basepoint-change}. From Lemma \ref{lem:basepoint-change} we obtain for each $i$:
\[
	\langle Z_{n;t}, G_i\rangle-\langle Z_{n;t}, G_{i+1}\rangle=\sum^n_{j=1}\sum_{J\in \mathcal{I}_j} a_{J;j} \langle Z_{J;j}, G_i\rangle.
\]
Therefore, $\langle Z_{n;t},G\rangle-\langle Z_{n;t},G'\rangle=\sum^{k-1}_{i=0} (\langle Z_{n;t},G_i\rangle-\langle Z_{n;t},G_{i+1}\rangle)$ and substituting in the above equation, 
we may express  $\langle Z_{n;t},G\rangle-\langle Z_{n;t},G'\rangle$ as a linear combination of $\{\langle Z_{I;j},G_i\rangle\ | 0\leq i<k\}$, i.e.
\[
\langle Z_{n;t},G\rangle-\langle Z_{n;t},G'\rangle=\sum^{k-2}_{i=0}\sum_{J,j} a_{J;j,i} \langle Z_{J;j},G_i\rangle,\qquad a_{J;j,i}\in \mathbb{Z}, 
\]
where the term $\langle Z_{I;j}, G_k\rangle$ is excluded from the sum.
Applying this step and \eqref{eq:z(G)-z(G')} inductively to terms in the sum above we conclude that $\langle Z_{n;t},G\rangle-\langle Z_{n;t},G'\rangle$ is a linear combination of integers from the set
\begin{equation}\label{eq:Delta-set}
\Gamma_Z=\{\langle Z_{J;k},G\rangle \ |\ J\subsetneq [n], k\in J\},
\end{equation}
(note that the index sets defined in \eqref{eq2:I_j} of Lemma \ref{lem:basepoint-change}, subjected to the above inductive process yield all subsets $J\subsetneq [n]$ with trunks $k\in J$). 
Letting $\Delta_Z(n;t):=\gcd(\Gamma_Z)$, we conclude that 
\begin{equation}\label{eq:bar-Z-cong}
\langle Z_{n;t}, G_L\rangle \equiv \langle Z_{n;t}, G'_L\rangle \mod \Delta_Z(n;t),
\end{equation}
and thus $\bar{Z}_{n;t}(L)$ is invariant under the basepoint changes.

 Invariance of $\overline{Z}_{n;t}(G_L)$ under Reidemeister moves follows immediately from Lemma \ref{lem:reidemeister}, if there are no basepoints on the interacting strands locally. If a move involves basepoints, we may slide them away obtaining a diagram $G'_L$, then using \eqref{eq:bar-Z-cong},  Lemma \ref{lem:reidemeister} applied to $G'_L$ yields the claim. The link homotopy invariance of $\overline{Z}_{n;t}(G_L)$ follows from the fact that tree diagram do not have arrows with a head and tail on the same component. Thus crossing changes within a given component of $L$ do not affect the value of $\overline{Z}_{n;t}(L)$. 
\end{proof}
\begin{proof}[Proof of Corollary \ref{cor:bar-mu-bar-Z}]
	Recall that we are trying to show that
	\begin{equation}\label{eq:bar-Z=bar-mu}
	 \overline{Z}_{n;1}(L)=\overline{\mu}_{n;1}(L).
	\end{equation}
After a possible link homotopy of $L$ we may assume $L\simeq \widehat{\ell}$ for some string link $\ell$, c.f. \cite{Habegger-Lin:1990}. Using \eqref{eq:bar-mu-sigma} and 
Theorem \ref{thm:tree=mu},
\[
\overline{\mu}_{n;1}(L)=\langle Z_{n;1}, G_\ell\rangle \mod \Delta_\mu(n;1),
\]
and from Main Theorem, 
\[
\overline{Z}_{n;1}(L)=\langle Z_{n;1}, G_\ell\rangle \mod \Delta_Z(n;1).
\]
Therefore, it suffices to prove 
\begin{equation}\label{eq:D_mu=D_Z}
 \Delta_Z(n;1)=\Delta_\mu(n;1).
\end{equation}
For that purpose, first observe that by {\bf (s1)} (Equation \eqref{eq:s1}) we may consider a subset $\Gamma'_Z\subset \Gamma_Z$:
\[
 \Gamma'_Z=\{\langle Z_{J;k^+},G\rangle, \langle Z_{J;k^-},G\rangle\ |\ J\subsetneq [n], k^+=\max(J), k^-=\min(J)\},
\]
and $\Delta'_Z(n;1)=\gcd(\Gamma'_Z)$, giving us
\[
\Delta_Z(n;1)=\Delta'_Z(n;1).
\]
Now, Corollary \ref{cor:Z-mu-prod} and cyclic symmetry of $\bar{\mu}$--invariants, yields \eqref{eq:D_mu=D_Z}, proving our claim.
\end{proof}
%
\begin{rem}
	We would like to  emphasize that the
	Main Theorem characterizes the value $\langle Z_{I;j}, G_L\rangle$ of the arrow polynomial $Z_{I;j}$ on an arbitrary Gauss diagram $G_L$ of a closed based link $L$, and shows that the residue class is an invariant (this property is crucial in  the forthcoming paper \cite{Komendarczyk-Michaelides:2016}).  The invariant $\bar{Z}_{n;1}(L)$ by itself can be defined in an obvious way, via Equation \eqref{eq:bar-mu-sigma} and Theorem \ref{thm:tree=mu} in terms of $Z_{n;1}(G_\ell)$ where $\widehat{\ell}$ is link homotopic to $L$.
\end{rem}


\end{document}